\definecolor{citation}{rgb}{0.2,0.58,0.2} 
\definecolor{formula}{rgb}{0.1,0.2,0.6}
\definecolor{url}{rgb}{0.3,0,0.5}
\newcommand{\reqnomode}{\tagsleft@false}
\def\dist{\,{\rm dist}}
\DeclareRobustCommand*{\bfseries}{%
  \not@math@alphabet\bfseries\mathbf
  \fontseries\bfdefault\selectfont
  \boldmath
}
\newlength{\defbaselineskip}
\newcommand{\setlinespacing}[1]
           {\setlength{\baselineskip}{#1 \defbaselineskip}}
\newcommand{\mint}{\mathop{\int\hskip -1,05em -\, \!\!\!}\nolimits}
\newtheorem{theorem}{Theorem}
\newtheorem{corollary}{Corollary}[section]
\newtheorem{definition}{Definition}
\newtheorem{remark}{Remark}[section]
\newtheorem{lemma}{Lemma}[section]
\newtheorem{proposition}{Proposition}[section]
\numberwithin{equation}{section}
\newcommand\eps\varepsilon
\def\eqn#1$$#2$${\begin{equation}\label#1#2\end{equation}}
\newcommand{\be}{\begin{equation}}
\newcommand{\ee}{\end{equation}}
\newcommand{\snr}[1]{\lvert #1\rvert}
\newcommand{\nr}[1]{\lVert #1 \rVert}
\newcommand{\RN}{\mathbb{R}^{N}}
\newcommand{\N}{\mathbb{N}}
\def\name[#1, #2]{#1 #2}
\title[%{\color{magenta} ??? Obstacle problem and ??? }
$\omega$-minima of functionals with $\varphi$-growth]{On the regularity of the $\omega$-minima of $\varphi$-functionals
%Removable sets for continuous solutions\\ to quasilinear non-uniformly elliptic equations
}
\author{Cristiana De Filippis}  \address{Cristiana De Filippis\\Mathematical Institute, University of Oxford\\ Andrew Wiles Building, Radcliffe Observatory Quarter, Woodstock Road, Oxford, OX26GG, Oxford, United Kingdom} \email{\texttt{Cristiana.DeFilippis@maths.ox.ac.uk}}
\begin{document}

\subjclass[2010]{35J60, 35J70\vspace{1mm}} %%ALERT CHECK 35J60 23J70 35B65 35D40

\keywords{Regularity, $\omega$-minima, $\varphi$-functionals, Orlicz spaces\vspace{1mm}}

\thanks{{\it Acknowledgements.}\ The author is supported by the Engineering and Physical Sciences Research Council (EPSRC): CDT Grant Ref. EP/L015811/1. 
\vspace{1mm}}

\maketitle

\begin{abstract}
 We focus on some regularity properties of $\omega$-minima of variational integrals with $\varphi$-growth and provide an upper bound on the Hausdorff dimension of their singular set.
\end{abstract}
\vspace{3mm}
%{\small \tableofcontents}

\setlinespacing{1.08}

\section{Introduction}\label{intro}
In this paper we study some regularity properties of $\omega$-minima of certain functionals with $\varphi$-growth, i.e., variational integrals whose integrand is modelled on an $N$-function $\varphi$; see Section \ref{assec} for an overview of its main properties. Precisely, we shall focus on two classes of non autonomous functionals:
\begin{flalign}\label{fullf}
W^{1,\varphi}(\Omega,\RN)\ni w &\mapsto   \mathcal{F}(w,\Omega):= \int_{\Omega}f(x,w,Dw) \ dx,\\
W^{1,\varphi}(\Omega,\RN)\ni w &\mapsto \mathcal{G}(w,\Omega):= \int_{\Omega}g(x,Dw) \ dx,\label{xf}
\end{flalign}
where $\Omega\in \mathbb{R}^{n}$ is an open set, $n\ge 2$, $N\ge 1$, $f\colon \Omega\times \RN\times \mathbb{R}^{N\times n}\to \mathbb{R}$ and $g\colon \Omega\times \mathbb{R}^{N\times n}\to \mathbb{R}$ are continuous integrands, see again Section \ref{assec} for the precise set of hypotheses and relevant definitions. Under polynomial growth assumptions, the regularity theory for minimizers of $\varphi$-functionals falls in the realm of the theory of variational integrals with non-standard growth, which was started by Marcellini's seminal works \cite{M3,M4,M5} and it is by now very rich. See \cite{Baronicv, BCM2,CD,CKP1,CKP2,CM1,DO,DP,DVS} for an (incomplete) account of the most recent advances in this field, \cite{D1,DM,DSV} for the case of manifold-constrained problems and critical systems and to \cite{Dark} for a reasonable survey concerning the regularity of minima under standard and non-standard growth  conditions. Needless to say, the most treated model example is given by $\varphi(t)=t^p$, where the functional in question 
$$
W^{1,\varphi}(\Omega,\RN)\ni w \mapsto  \int_{\Omega}|Dw|^p \ dx\;, 
$$
has been studied at length over the years and whose associated Euler-Lagrange equation defines the well-known $p$-Laplacean operator. We refer to \cite{KuMi1,KuMi2} for a rather comprehensive account of recent regularity theory. For the ease of notation, we denote \eqref{fullf} or \eqref{xf} by
\begin{flalign}\label{h}
\mathcal{K}(w,\Omega):=\int_{\Omega}k(x,w,Dw) \ dx, \ \ k(\cdot)\in \{f(\cdot),g(\cdot)\},
\end{flalign}
and of course, when $k(\cdot)\equiv g(\cdot)$ no dependency from the second variable occurs. The classical notion of minimizer for $u \in W^{1,\varphi}(\Omega,\RN)$ then simply requires that 
\begin{flalign*}
\int_{\Omega_{0}}k(x,u,Du) \ dx \le \int_{\Omega_{0}}k(x,w,Dw) \ dx
\end{flalign*}
holds for all $w \in u+W^{1,\varphi}_{0}(\Omega_{0},\RN)$ and all $\Omega_{0}\Subset \Omega$, while the concept of $\omega$-minimizer is rather more delicate and prescribes that
\begin{flalign*}
\int_{B_{r}}k(x,u,Du) \ dx \le (1+\omega(r))\int_{B_{r}}k(x,w,Dw) \ dx,
\end{flalign*}
for all $w\in u+W^{1,\varphi}_{0}(B_{r},\RN)$ and all balls $B_{r}\Subset \Omega$. Here $\omega \colon [0,\infty)\to[0,\infty)$ is a non decreasing, concave function such that $\lim_{r\to 0}\omega(r)=0$. See \eqref{om} for more details. The notion of $\omega$-minimizers is an extension of the concept of minimizer: it was introduced in the framework of Geometric Measure Theory, \cite{A, B} and then studied in the non-parametric setting by several authors, see \cite[Chapters 7, 8, 9]{G} for an introduction and \cite{Dark} for a list of references. The interest raised on the question of regularity for $\omega$-minimizers is motivated by the fact that, in certain situations, minimizers of constrained variational problems can be realized as $\omega$-minimizers of unconstrained problems, thus significantly simplifying the treatment. This is the case, for instance, of obstacle problems and volume constrained minimizers, as first noted in the setting of Geometric Measure Theory \cite{A,B} and then in the setting of variational integrals \cite{Anzellotti, DGG}. Eventually, an increasing number of papers has been dedicated to the study of regualrity properties of $\omega$-minima both in the scalar and in the vectorial case \cite{BKM, DEF, KM1, KM2, OK2, OK3}. Here we investigate some regularity properties of the $\omega$-minima of functionals \eqref{fullf}-\eqref{xf}. Precisely, when considering variational integrands like \eqref{xf}, we derive fractional differentiability for a certain function of $Du$. This is shown in Theorem \ref{frac}.
\begin{theorem}\label{frac}
Let $u\in W^{1,\varphi}(\Omega,\RN)$ be an $\omega$-minimizer of \eqref{xf}, under assumptions \eqref{assg} and \eqref{om}. Then 
\begin{flalign}\label{res}
V_{\varphi}(Du)\in W^{\delta,2}_{\mathrm{loc}}(\Omega,\mathbb{R}^{N\times n})\cap W^{1,\frac{2n}{n-2\delta}}_{\mathrm{loc}}(\Omega,\mathbb{R}^{N\times n})
\end{flalign}
for all $\delta \in (0,\sigma)$, where $\sigma:=\min\left\{\alpha,\frac{\gamma}{2+\gamma},\frac{\gamma(1+\alpha)}{2(1+\gamma)}\right\}$.
\end{theorem}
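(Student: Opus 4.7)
The plan is to use a freezing strategy combined with finite differences and a balancing of scales, following the classical template for $\omega$-minimizers of non-autonomous functionals. Fix a ball $B_r\Subset\Omega$ centered at $x_0$ with $r$ small, and let $v\in u+W^{1,\varphi}_{0}(B_r,\RN)$ be the minimizer of the frozen, autonomous functional $\tilde{\mathcal{G}}(w,B_r):=\int_{B_r}g(x_0,Dw)\,dx$. Standard Orlicz regularity theory for autonomous $\varphi$-growth functionals (Diening--Ettwein, Diening--Stroffolini--Verde) yields $V_\varphi(Dv)\in W^{1,2}_{\mathrm{loc}}(B_r,\mathbb{R}^{N\times n})$ together with the Caccioppoli-type bound $\int_{B_{r/2}}|DV_\varphi(Dv)|^2\,dx\le Cr^{-2}\int_{B_r}\varphi(1+|Dv|)\,dx$.

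Testing the $\omega$-minimality of $u$ against the competitor $v$ and the minimality of $v$ against $u$, and exploiting the $\gamma$-Hölder continuity of $g$ in the $x$-variable encoded in \eqref{assg}, the comparison estimate
\begin{equation*}
\int_{B_r}|V_\varphi(Du)-V_\varphi(Dv)|^2\,dx\le C\bigl(\omega(r)+r^{2\gamma}\bigr)\int_{B_{2r}}\varphi(1+|Du|)\,dx
\end{equation*}
can be derived. The key Orlicz ingredients here are the strong monotonicity estimate $|V_\varphi(\xi)-V_\varphi(\eta)|^2\lesssim \langle \partial_z g(x_0,\xi)-\partial_z g(x_0,\eta),\xi-\eta\rangle$ and the shift-change inequalities for $\varphi$, used to control the cross terms produced by replacing $g(x,\cdot)$ with $g(x_0,\cdot)$ in the energies.

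Combining the previous two ingredients with the integral representation $\tau_h V_\varphi(Dv)(x)=\int_0^1 h\cdot DV_\varphi(Dv)(x+sh)\,ds$ gives, for $|h|<r/4$,
\begin{equation*}
\int_{B_{r/2}}|\tau_h V_\varphi(Du)|^2\,dx\le C\Bigl(\frac{|h|^2}{r^2}+\omega(r)+r^{2\gamma}\Bigr)\int_{B_{2r}}\varphi(1+|Du|)\,dx.
\end{equation*}
To extract the fractional exponent I would balance the three terms by choosing $r\sim |h|^\theta$ for a suitable $\theta\in(0,1)$; minimizing over $\theta$ the competing quantities $|h|^{2(1-\theta)}$, $\omega(|h|^\theta)$ and $|h|^{2\gamma\theta}$, together with a power-type bound $\omega(\rho)\le c\rho^{2\alpha}$ allowed by \eqref{om}, produces precisely the three candidates $\alpha$, $\gamma/(2+\gamma)$ and $\gamma(1+\alpha)/(2(1+\gamma))$, whose minimum is $\sigma$. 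Integrating the resulting Nikolskii-type estimate in $h$ then yields $V_\varphi(Du)\in W^{\delta,2}_{\mathrm{loc}}$ for every $\delta<\sigma$, and the second membership in \eqref{res} follows by fractional Sobolev embedding $W^{\delta,2}\hookrightarrow L^{2n/(n-2\delta)}$ applied to $V_\varphi(Du)$, combined with higher integrability of $DV_\varphi(Du)$ obtained by transferring the Caccioppoli estimate for $v$ back to $u$ via the comparison bound.

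The hard part will be producing the sharp comparison estimate with the correct dependence on both $\omega(r)$ and $r^{2\gamma}$: it requires the right monotonicity and shift-change inequalities in the Orlicz setting, plus careful bookkeeping of the two distinct smallness scales (the $\omega$-smallness from $\omega$-minimality and the $\gamma$-Hölder smallness from the coefficient). The three-term balance is then essentially algebraic, but the interplay between the concave modulus $\omega$ and the power $r^{2\gamma}$ is what forces the characteristic exponent $\gamma(1+\alpha)/(2(1+\gamma))$, reflecting the competition between the two smallness mechanisms.
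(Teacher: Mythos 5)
Your overall architecture (comparison map on a ball of radius $r$, energy comparison, finite-difference estimate, balance $r\sim|h|^\theta$) matches the paper's, but there are two linked problems that make the proposal fall short of the claimed exponent $\sigma$.

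First, there is a parameter confusion throughout: in \eqref{assg} the coefficient $x\mapsto g(x,\cdot)$ is $\alpha$-H\"older, while $\gamma$ in \eqref{om} is the decay rate of $\omega$. You write the freezing error as $r^{2\gamma}$ and invoke a bound $\omega(\rho)\le c\rho^{2\alpha}$, which is the reverse of what \eqref{assg} and \eqref{om} actually say. Once corrected, the freezing error from replacing $g(x,\cdot)$ by $g(x_0,\cdot)$ is of order $r^{\alpha}$ (via $\eqref{assg}_{7}$ and integration in $z$), and the $\omega$-minimality contributes $\omega(r)\le Lr^{\gamma}$, so your comparison estimate becomes $\int_{B_r}|V_\varphi(Du)-V_\varphi(Dv)|^2\,dx\le C(r^\gamma+r^\alpha)\int_{B_{2r}}\varphi(|Du|)\,dx$.

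Second, and more fundamentally, the freezing route cannot recover the full exponent $\sigma$. Your $v$ minimizes the autonomous frozen functional $\int g(x_0,Dw)\,dx$, so its finite-difference estimate is pure: $\int|\tau_hV_\varphi(Dv)|^2\lesssim(|h|/r)^2\int\varphi(|Dv|)$, with no $|h|^\alpha$ term. Balancing $|h|^{2(1-\theta)}$, $|h|^{\theta\gamma}$, $|h|^{\theta\alpha}$ over $r\sim|h|^\theta$ gives at best $\sigma=\min\{\alpha/(2+\alpha),\gamma/(2+\gamma)\}$. When the $\alpha$-branch dominates this is $\alpha/(2+\alpha)$, which is strictly smaller than $\alpha$; e.g.\ as $\gamma\to\infty$ the theorem gives $\sigma\to\alpha$ but your scheme saturates at $\alpha/(2+\alpha)$. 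The paper avoids this by choosing $v$ to minimize the \emph{non-frozen} Dirichlet problem $\int_{B(h)}g(x,Dw)\,dx$ (so the only comparison error is $\omega(|h|^\beta)$, with no freezing penalty), and then uses Proposition \ref{P1} (i.e.\ \cite[Theorem 11]{DE}) for the Euler--Lagrange system of $v$, which already delivers a fractional estimate with the terms $\varrho^{-2}|h|^2+\varrho^{-1}|h|^{1+\alpha}+|h|^{2\alpha}$. Because the $|h|^{2\alpha}$ contribution there carries no $\varrho$-scaling, the balance with $\varrho=|h|^\beta$ produces the three genuine candidates $\alpha$, $\gamma/(2+\gamma)$, $\gamma(1+\alpha)/(2(1+\gamma))$ in \eqref{res}. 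To repair your proof you should replace the frozen comparison map by the non-frozen one and import the fractional estimate \eqref{estv} for it.

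Finally, the last clause of your proposal about upgrading to a full $W^{1,2n/(n-2\delta)}$ membership by transferring Caccioppoli from $v$ to $u$ is not supported; what the fractional embedding actually gives (and what the paper's argument establishes) is $V_\varphi(Du)\in L^{2n/(n-2\delta)}_{\mathrm{loc}}$.
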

The content of the previous theorem quantifies the interaction between the H\"older continuity exponent of the map $x\mapsto g(x,\cdot)$ and the rate of decay of $\omega(\cdot)$ at zero, resulting in the fractional differentiability of $V_{\varphi}(Du)$. Then, via Sobolev embedding, we also prove higher integrability for $V_{\varphi}(Du)$. We do not know whether this result is optimal: however it might be the correct one. In fact, if $\alpha$ is the H\"older exponent of the map $x\mapsto g(x,\cdot)$, cf. $\eqref{assg}_{2}$ and $\gamma$ controls the decay at zero of $\omega(\cdot)$, see \eqref{om}, then when $\alpha=1$, we get $\sigma=\frac{\gamma}{2+\gamma}$, obtained in \cite[Lemma 3.1]{KM1}, while, as $\gamma\to \infty$, we end up with $\sigma=\alpha$, which can be retrieved in \cite{CM1, ELM}. Extra fractional differentiability is not only interesting \emph{per se} as a result, but it can be crucial in order to show finer regularity properties, see \cite{ABES,AKM,BL,BLS,DKM,KuMi3,Min2}. On the other hand, if we consider quasilinear structures as the one characterizing \eqref{fullf}, we can provide a partial regularity result, i.e.: $V_{\varphi}(Du)$ is locally H\"older continuous on an open subset $\Omega_{u}\subset \Omega$ of full $n$-dimensional Lebesgue's measure, together with an intrinsic description of its complementary $\Sigma_{u}:=\Omega\setminus \Omega_{u}$, the singular set. In fact, we have:
\begin{theorem}\label{reg}
Let $u \in W^{1,\varphi}(\Omega,\RN)$ be an $\omega$-minimizer of \eqref{fullf}, under assumptions \eqref{assf} and \eqref{omt}-\eqref{om}. Then there exists an open set $\Omega_{u}\subset \Omega$ of full $n$-dimensional Lebesgue measure such that 
\begin{flalign*}
&V_{\varphi}(Du) \in C^{0,\beta_{0}}_{\mathrm{loc}}(\Omega_{u},\mathbb{R}^{N\times n}),\quad \beta_{0}\in (0,1),\\
&Du\in C^{0,\beta'}_{\mathrm{loc}}(\Omega_{u},\mathbb{R}^{N\times n}),\quad \beta'\in (0,1),
\end{flalign*}
with $\beta',\beta_{0}=\beta',\beta_{0}(n,N,\nu,L,\Delta_{2}(\varphi),\Delta_{2}(\varphi,\varphi^{*}),c_{\varphi},\alpha,\gamma)$. Precisely, the regular set $\Omega_{u}$ can be characterized as the set of points $x_{0}\in \Omega$ such that
\begin{flalign}\label{rs}
\varphi(\varepsilon/\varrho)^{-1}\mint_{B_{\varrho}(x_{0})}\varphi(\snr{Du}) \ dx<1,
\end{flalign}
for some small $\varepsilon=\varepsilon(n,N,\nu,L,\Delta_{2}(\varphi),\Delta_{2}(\varphi,\varphi^{*}),c_{\varphi})$.
\end{theorem}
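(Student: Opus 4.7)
The plan is to combine a freezing procedure in the space--value variables with a $\varphi$-harmonic approximation scheme, in the spirit of Duzaar--Mingione--Kristensen and Diening--Stroffolini--Verde. Fix a ball $\brh \Subset \Omega$ centered at a point $x_{0}$ at which the smallness condition \eqref{rs} holds (with $\varepsilon$ to be chosen), and consider the frozen autonomous integrand
$$
\tilde f(z):=f\bigl(x_{0},(u)_{\brh},z\bigr).
$$
Using the $\omega$-minimality of $u$ together with the H\"older continuity of $x\mapsto f(x,y,z)$ from \eqref{assf} and the continuous dependence on $y$, one compares $u$ on $\brh$ with the minimizer $h$ of the frozen functional $w\mapsto \int_{\brh}\tilde f(Dw)\,dx$ carrying the boundary datum $u|_{\partial \brh}$. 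The comparison error splits into three contributions: an $\omega$-defect $\omega(\varrho)\int_{\brh}\varphi(\snr{Du})\,dx$ coming from the very definition of $\omega$-minimizer; an $x$-oscillation term of order $\varrho^{2\alpha}$ times averages of $\varphi(\snr{Du})$ and $\varphi(\snr{Dh})$, produced by $\eqref{assf}$; and a $u$-oscillation term handled via Poincar\'e-type inequalities combined with the higher integrability of $V_{\varphi}(Du)$, obtained from a Caccioppoli inequality for $\omega$-minima plus a Gehring-type argument.

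Next, the frozen minimizer $h$ solves a quasilinear Euler--Lagrange system with uniformly monotone, quasiconvex integrand of $\varphi$-growth. The partial regularity theory for such autonomous systems, carried out via $\varphi$-harmonic approximation, yields an excess-decay inequality of Campanato type
$$
\mint_{B_{\theta\varrho}(x_{0})}\snr{V_{\varphi}(Dh)-(V_{\varphi}(Dh))_{B_{\theta\varrho}}}^{2}\,dx \le C\,\theta^{2\beta_{0}}\mint_{\brh}\snr{V_{\varphi}(Dh)-(V_{\varphi}(Dh))_{\brh}}^{2}\,dx,
$$
valid for every $\theta\in(0,1)$ and some $\beta_{0}\in(0,1)$ depending only on the structural data; here the smallness \eqref{rs} is what enforces the non-degeneracy required by the vectorial scheme. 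Merging this with the comparison step gives a hybrid decay inequality for the natural $V_{\varphi}$-excess $E(x_{0},\cdot)$ of $u$, schematically
$$
E(x_{0},\theta\varrho) \le C\theta^{2\beta_{0}}E(x_{0},\varrho)+C\theta^{-n}\bigl(\omega(\varrho)+\varrho^{2\alpha}+\text{lower order in }\varrho\bigr).
$$

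Choosing $\theta$ so small that $C\theta^{2\beta_{0}}\le \tfrac{1}{2}\theta^{2\beta'}$ for some $\beta'<\beta_{0}$, and using \eqref{omt}--\eqref{om} to guarantee the correct rate of decay of $\omega(\varrho)+\varrho^{2\alpha}$ along the geometric iteration $\varrho\mapsto \theta\varrho$, a standard iteration lemma produces the Morrey-type bound
$$
\mint_{B_{r}(x_{0})}\snr{V_{\varphi}(Du)-(V_{\varphi}(Du))_{B_{r}}}^{2}\,dx \le C\,r^{2\beta_{0}}
$$
for every sufficiently small $r$. Campanato's characterization then yields $V_{\varphi}(Du)\in C^{0,\beta_{0}}_{\mathrm{loc}}$ in a neighborhood of $x_{0}$, and the corresponding H\"older regularity of $Du$ with exponent $\beta'$ follows by inverting $V_{\varphi}$ via its bi-Lipschitz behavior on bounded sets, possibly at the price of a slightly smaller exponent.

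The regular set $\Omega_{u}$ defined through \eqref{rs} is open, since the smallness condition is stable under small perturbations of $x_{0}$ and of the radius (in fact it propagates along the iteration), while $\snr{\Omega\setminus \Omega_{u}}=0$ follows from Lebesgue's differentiation theorem applied to $\varphi(\snr{Du})\in L^{1}_{\mathrm{loc}}(\Omega)$. The main technical obstacle is the calibration of the three error terms (the $\omega$-, $x$- and $u$-defects) against the natural decay $\theta^{2\beta_{0}}$ delivered by the frozen problem, inside the non-standard $\varphi$-growth framework: every Caccioppoli, Poincar\'e and harmonic approximation lemma must be recast in the natural $V_{\varphi}$ and $\varphi^{*}$ language, and all constants must be tracked to depend only on the structural data appearing in the statement.
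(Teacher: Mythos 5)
Your overall plan — freeze in $(x,v)$, compare, derive an excess-decay, iterate — is the same genus as the paper's, but the key technical device you invoke, $\varphi$-harmonic approximation, is not what the paper uses, and the difference is not cosmetic. The paper relies critically on the Uhlenbeck structure assumption $\eqref{assf2}$ (i.e.\ $f(x,v,z)=\tilde f(x,v,\snr{z})$), which your proposal never mentions. This structure is what allows the author to prove, in Lemma~\ref{aff} and Corollary~\ref{aff1}, that the frozen integrand $t\mapsto\tilde f(x_0,(u)_{B_r},t)$ is itself an $N$-function satisfying all the hypotheses of Section~\ref{assec} with constants controlled by those of $\varphi$; consequently the reference estimates from \cite{DVS} (Proposition~\ref{aff30}) — a full interior sup-bound $\sup_{B_t}\varphi(\snr{Dv})\le c\mint_{B_s}\varphi(\snr{Dv})\,dx$ and a Campanato excess decay for $V_\varphi(Dv)$ — hold \emph{everywhere}, with no degeneracy or non-degeneracy dichotomy, and no harmonic approximation is needed. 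The paper's own remark after Step~5 makes explicit that the radial structure is required precisely to obtain these estimates in the vectorial case.

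This affects your argument in two concrete ways. First, you claim the smallness \eqref{rs} ``enforces the non-degeneracy required by the vectorial scheme''; that is not its role here. The smallness is used in Step~1 (see \eqref{omt1}--\eqref{omt3}) to bound the oscillation terms $\mint\tilde\omega(\snr{u-(u)_{B_r}})$, $\mint\tilde\omega(\snr{v-(v)_{B_r}})$ and $\mint\tilde\omega(\snr{u-v})$ by $c\varepsilon^\alpha$, which is what controls the freezing error; it has nothing to do with non-degeneracy of the reference problem. Second, a genuine $\varphi$-harmonic approximation scheme for a general quasiconvex integrand would characterize the regular set by smallness of the excess $\mint_{B_\varrho}\snr{V_\varphi(Du)-(V_\varphi(Du))_{B_\varrho}}^2\,dx$ together with a uniform bound on $\snr{(Du)_{B_\varrho}}$, not by the condition \eqref{rs}, which involves only the intrinsic quantity $\varphi(\varepsilon/\varrho)^{-1}\mint_{B_\varrho}\varphi(\snr{Du})\,dx$. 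So if you carried your outline to completion you would prove a partial regularity statement with a \emph{different} description of $\Omega_u$ from the one claimed; you would not recover \eqref{rs}.

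Finally, note that the paper's iteration is two-stage: Step~1 iterates a pure Morrey decay for $\int_{B_\varrho}\varphi(\snr{Du})\,dx$ (using \eqref{aff6} and the comparison estimate \eqref{17}), and the smallness condition $E(B_{\tau\varrho})<\varphi(\varepsilon/\tau\varrho)$ propagates along the iteration by a scaling computation; only afterwards, in Step~4, is the Campanato estimate \eqref{aff7} combined with the Morrey decay in a single comparison to get \eqref{28}. Your single-stage excess iteration is a legitimate alternative, but it must be calibrated against the Campanato-decay right-hand side in \eqref{aff7}, which is $\mint\varphi(\snr{Dv})\,dx$ rather than the excess itself; as written your schematic inequality assumes a stronger excess-to-excess estimate for the frozen minimizer than the one the paper provides. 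Also, the $x$-oscillation error should be of order $r^{\alpha}$, not $\varrho^{2\alpha}$: the relevant assumption is $\eqref{assf}_{6}$, giving $\snr{\mathrm{(II)}}\le cr^{\alpha}\mint_{B_{2r}}\varphi(\snr{Du})\,dx$ after \emph{no} squaring.
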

From \eqref{rs}, it follows the inclusion:
\begin{flalign*}
\Sigma_{u}\subset \left\{x_{0}\in \Omega\colon \limsup_{\varrho\to 0}\frac{1}{\varphi(\varrho^{-1})}\mint_{B_{\varrho}(x_{0})}\varphi(\snr{Du}) \ dx>0\right\},
\end{flalign*}
which is fundamental as to determine an upper bound for the Hausdorff dimension of $\Sigma_{u}$. The paper is organized as follows: in Section \ref{assec} we describe our framework, fully detail the problem and list the main assumptions we adopt. In Section \ref{pre} we collect some preliminary results, well known to experts which will be crucial for the proof. Section \ref{basic} essentially contains the basic regularity results such as Caccioppoli's inequality and Gehring's lemma. Finally, Sections \ref{te1} and \ref{te2} are devoted to the proof of Theorem \ref{frac} and Theorem \ref{reg} respectively.

\section{Notation and main assumptions}\label{assec}
In what follows we denote by $c$ a general positive constant, possibly varying from line to line; special occurrences will be denoted by $c_{1}, c_{*}, \bar{c}$ or the like. All such constants will always be larger or equal than one; moreover, relevant dependencies on parameters will be emphasized using parentheses. By $B_{r}(x_{0}):=\left\{x \in \mathbb{R}^{n}\colon \snr{x-x_{0}}<r\right\}$ we mean the open ball with centre $x_{0}$ and radius $r>0$; when not relevant, or clear from the context, we shall omit denoting the centre, writing just $B_{r}(x_{0})\equiv B_{r}$. Moreover, $Q_{r}(x_{0}):=\left\{x \in \mathbb{R}^{n}\colon \snr{x^{i}-x_{0}^{i}}<r, \ i \in \{1,\cdots,n\}\right\}$ will indicate the open cube having side length $2r$, center $x_{0}$ and sides parallel to the axes. Unless otherwise stated, different balls or cubes in the same context will have the same centre. For $a_{1},a_{2}$ scalar or vectors and $\lambda \in [0,1]$, we refer to the segment joining $a_{1}$ and $a_{2}$ with the symbol $[a_{1},a_{2}]_{\lambda}:=\lambda a_{1}+(1-\lambda)a_{2}$. With $\tilde{\Omega}\subset \mathbb{R}^{n}$, $n\ge1$, being a set of positive, finite Lebesgue measure $\snr{\tilde{\Omega}}>0$ and with $w\colon \tilde{\Omega}\to \mathbb{R}^{N}$, $N\ge 1$ being a measurable map, we shall denote its integral average by
\begin{flalign*}
(w)_{\tilde{\Omega}}:=\mint_{\tilde{\Omega}}w(x) \ dx =\frac{1}{\snr{\tilde{\Omega}}}\int_{\tilde{\Omega}}w(x) \ dx.
\end{flalign*}
As usual, if $v\colon \Omega \to \mathbb{R}^{N}$ is any $\gamma$-H\"older continuous map with $\gamma \in (0,1]$ and $A \subset \Omega$, then its H\"older seminorm is defined as
\begin{flalign*}
[v]_{0,\gamma;A}:=\sup_{x,y\in A, \ x \not = y}\frac{\snr{v(x)-v(y)}}{\snr{x-y}^{\gamma}}.
\end{flalign*}
When not relevant or clear from the context, we shall avoid mentioning the dependence of the H\"older seminorm from $A\subset \Omega$ by simply writing: $[v]_{0,\gamma}:=[v]_{0,\gamma;A}$.
\vspace{1mm}
\subsection{Fractional Sobolev spaces} For $h \in \mathbb{R}^{n}\setminus \{0\}$, if $G\colon \Omega\to \RN$ is any vector field, we define the finite difference operators as
\begin{flalign*}
\tau_{h}G(x):=G(x+h)-G(x) \quad \mathrm{and}\quad \tau_{-h}G(x)=G(x)-G(x-h).
\end{flalign*}
This makes sense whenever $x$, $x+h$ and $x-h$ belong to $\Omega$, an assumption that will be satisfied whenever we use $\tau_{h}$. If $\Omega\subset \mathbb{R}^{n}$ is a bounded, open set, given $p>1$ and $\sigma \in (0,1)$ we say that $w \in W^{\sigma,p}(\Omega,\RN)$ provided that $w\in L^{p}(\Omega,\RN)$ and the Gagliardo norm of $w$:
\begin{flalign*}
\nr{w}_{W^{\sigma,p}(\Omega,\RN)}=\left(\int_{\Omega}\snr{w}^{p} \ dx\right)^{1/p}+\left(\int_{\Omega}\int_{\Omega}\frac{\snr{w(x)-w(y)}^{p}}{\snr{x-y}^{n+\sigma p}} \ dxdy\right)^{1/p}
\end{flalign*}
is finite. The local variant $W^{\sigma,p}_{\mathrm{loc}}(\Omega,\RN)$ is defined in the usual way. For more details on this matter see \cite{DPV}.
\vspace{1mm}
\subsection{$N$-functions}\label{N} We consider a convex function $\varphi\colon [0,\infty)\to [0,\infty)$, such that
\begin{flalign}\label{phi1}
\varphi \in C^{1}([0,\infty))\cap C^{2,\theta}((0,\infty)), \quad \varphi(0)=\varphi'(0)=0, \quad t\mapsto\varphi'(t) \ \mathrm{increasing}, \quad \lim_{t\to \infty}\varphi'(t)=\infty.
\end{flalign}
We say that $\varphi$ satisfies the $\Delta_{2}$-condition if there exists a constant $c>0$ such that for all $t\ge 0$ there holds $\varphi(2t)\le c\varphi(t)$. By $\Delta_{2}(\varphi)$ we denote the smallest of such constants. Since $\varphi(t)\le \varphi(2t)$, the $\Delta_{2}$-condition is equivalent to $\varphi(t)\sim \varphi(2t)$ for all $t\ge 0$, where the constants implicit in "$\sim$" depend only from the characteristics of $\varphi$. By $(\varphi')^{-1}\colon [0,\infty)\to [0,\infty)$ we mean the function $(\varphi')^{-1}(t):=\left\{s \in [0,\infty)\colon \varphi(s)\le t\right\}$. If $\varphi'$ is strictly increasing, then $(\varphi')^{-1}$ is the inverse function of $\varphi'$. Then $\varphi^{*}\colon [0,\infty)\to [0,\infty)$, defined as $\varphi^{*}(t):=\int_{0}^{t}(\varphi')^{-1}(s) \ ds$ is again an $N$-function and $(\varphi^{*})'(t)=(\varphi')^{-1}(t)$ for $t>0$. It is the complementary function of $\varphi$. Note that $\varphi^{*}(t)=\sup_{s\ge 0}\left\{ st-\varphi(s) \right\}$ and $(\varphi^{*})^{*}=\varphi$. For all $\delta>0$, there exists a $c_{\delta}$, only depending on $\delta$ and on $\Delta_{2}(\varphi,\varphi^{*})$, such that for all $t,s\ge 0$ there holds
\begin{flalign}\label{you}
ts\le \delta \varphi(t)+c_{\delta}\varphi^{*}(s), \quad ts\le \delta \varphi^{*}(t)+c_{\delta}\varphi(s).
\end{flalign}
For $\delta=1$, $c_{\delta}=1$. This is Young's inequality. For all $t\ge 0$,
\begin{flalign*}
\begin{cases}
\ (t/2)\varphi(t/2)\le \varphi(t)\le t\varphi'(t)\\
\ \varphi\left(\frac{\varphi^{*}(t)}{t}\right)\le \varphi^{*}(t)\le \varphi\left(\frac{2\varphi^{*}(t)}{t}\right)
\end{cases}.
\end{flalign*}
Therefore, uniformly in $t\ge 0$,
\begin{flalign}\label{af2}
\varphi(t)\sim t\varphi'(t)\quad \mathrm{and}\quad \varphi^{*}(\varphi'(t))\sim \varphi(t), 
\end{flalign}
with constants depending only on $\Delta_{2}(\varphi,\varphi^{*})$. If $\varphi$ and $\rho$ are $N$-functions with $\varphi(t)\le \rho(t)$ for all $t\ge 0$, then 
\begin{flalign*}
\rho^{*}(t)\le \varphi^{*}(t) \ \ \mathrm{for \ all \ }t\ge 0.
\end{flalign*}
We also assume that
\begin{flalign}\label{phi6}
\varphi'(t)\sim t\varphi''(t) \ \ \mathrm{for \ all \ }t\ge 0,
\end{flalign}
where the constants implicit in "$\sim$" depend only on $c_{\varphi}$, a constant describing the characteristics of $\varphi$, and that $\varphi''$ is H\"older-continuous away from zero, i.e.:
\begin{flalign}\label{aff5}
\snr{\varphi''(s+t)-\varphi''(t)}\le L\varphi''(t)\left(\frac{\snr{s}}{t}\right)^{\vartheta} \ \ \mathrm{for some} \ \ \vartheta \in (0,1],
\end{flalign}
for all $t>0$ and $\snr{s}<t/2$. Here, $L$ is a positive, absolute constant. Let us discuss the natural functional setting related to functionals \eqref{fullf}-\eqref{xf}.
\begin{definition}\label{d1}
If $\Omega \subset \mathbb{R}^{n}$ is any open set and $w\colon \Omega\to \RN$ is a measurable map, consider the Luxemburg norm
\begin{flalign*}
\nr{w}_{L^{\varphi}(\Omega)}:=\inf \left\{\lambda >0\colon \int_{\Omega}\varphi (\snr{w}/\lambda) \ dx \le 1\right\}.
\end{flalign*}
Then, the space $L^{\varphi}(\Omega,\RN)$ is defined as
\begin{flalign*}
L^{\varphi}(\Omega,\RN):=\left\{ \ w\colon \Omega\to \RN \ \mbox{measurable, such that} \ \nr{w}_{L^{\varphi}(\Omega)}<\infty  \ \right\},
\end{flalign*}
while the Orlicz-Sobolev space $W^{1,\varphi}(\Omega,\mathbb{R}^{N})$ is naturally defined as
\begin{flalign*}
W^{1,\varphi}(\Omega,\mathbb{R}^{N}):=\left\{w\in L^{\varphi}(\Omega,\RN) \ \mbox{such that} \ Dw \in L^{\varphi}(\Omega,\mathbb{R}^{N\times n})\right\}
\end{flalign*}
with norm
\begin{flalign*}
\nr{w}_{W^{1,\varphi}(\Omega,\RN)}:=\inf\left\{\lambda>0\colon \int_{\Omega}\varphi(\snr{w}/\lambda)+\varphi(\snr{Dw}/\lambda) \ dx\le 1 \right\},
\end{flalign*}
and $W^{1,\varphi}_{0}(\Omega,\RN)$ is the closure of $C^{\infty}_{c}(\Omega,\RN)$ with respect to the above norm. The local variant of those spaces is then defined in an obvious way.
\end{definition}
Notice that, under the assumption listed so far, it is easy to see that there are $1<s_{0}\le q_{0}$ so that $t^{s_{0}}\le c(1+\varphi(t))\le c(1+t^{q_{0}})$, for $c=c(\Delta_{2}(\varphi),\Delta_{2}(\varphi,\varphi^{*}))$, see \cite{DVS, HHT}, so the spaces in Definition \ref{d1} can be equivalently defined as
\begin{flalign*}
&L^{\varphi}(\Omega,\RN):=\left\{w\colon \Omega\to \RN \ \mbox{measurable}\colon \varphi(\snr{w})\in L^{1}(\Omega)\right\}, \\ 
&W^{1,\varphi}(\Omega,\mathbb{R}^{N}):=\left\{w\in L^{\varphi}(\Omega,\RN)\colon Dw\in L^{\varphi}(\Omega,\mathbb{R}^{N\times n})\right\}.
\end{flalign*}
We define alsto the auxiliary vector field $V_{\varphi}\colon \mathbb{R}^{N\times n}\to \mathbb{R}^{N\times n}$ by
\begin{flalign}\label{vphi}
V_{\varphi}(z):=\left(\frac{\varphi'(\snr{z})}{\snr{z}}\right)^{\frac{1}{2}}z \ \ \mbox{for \ any \ }z \in \mathbb{R}^{N\times n};
\end{flalign}
from \eqref{phi1} it turns out to be a bijection of $\mathbb{R}^{N\times n}$, see \cite{DVS}. Moreover, from \cite[Lemma 2.10]{DVS} we know that
\begin{flalign}\label{aff40}
V_{\varphi}, \ V_{\varphi}^{-1}\in C^{0,\beta}(\mathbb{R}^{N\times n},\mathbb{R}^{N\times n}) \ \ \mbox{for some} \ \ \beta\in (0,1),
\end{flalign}
with $\beta=\beta(\Delta_{2}(\varphi,\varphi^{*}))$. In the whole paper, \emph{$\varphi$ will always satisfy all the hypotheses listed before}. Our main references for this part are \cite{BCM2, C, DVS}.
\vspace{1mm}
\subsection{Variational setting} In the framework described above, we consider integral functionals of the calculus of variations of the type \eqref{fullf}-\eqref{xf}, where $\Omega \subset \mathbb{R}^{n}$, $n\ge 2$, is an open set, $f\colon \Omega\times \RN\times \mathbb{R}^{N\times n}\to \mathbb{R}$ is a Carath\'eodory integrand such that there exists $\tilde{f}\colon \Omega\times \RN\times [0,\infty)\to \mathbb{R}$ such that
\begin{flalign}\label{assf2}
\begin{cases}
\ f(x,v,z)=\tilde{f}(x,v,\snr{z}) \ \ \mathrm{for \ all \ }(x,v)\in \Omega\times\mathbb{R}^{N}, z \in \mathbb{R}^{N\times n},\\
\ t\mapsto \tilde{f}(\cdot,t)\in C^{1}([0,\infty))\cap C^{2,\vartheta}((0,\infty)) \ \ \mathrm{for \ some} \ \ \vartheta \in (0,1]
\end{cases}
\end{flalign}
and
\begin{flalign}\label{assf}
\begin{cases}
\ z\mapsto f(\cdot,z) \in  C^{1}(\mathbb{R}^{N\times n})\cap C^{2,\vartheta}(\mathbb{R}^{N\times n}\setminus \{0\}),\\
\ x\mapsto f(x,\cdot)\in C^{0,\alpha}(\Omega) \ \ \mbox{for some} \ \ \alpha \in (0,1],\\
\ \nu \varphi(\snr{z})\le f(x,v,z) \le L\varphi(\snr{z}),\\
\ \snr{\partial f(x,v,z)}\snr{z}+\snr{\partial^{2} f(x,v,z)}\snr{z}^{2}\le L\varphi(\snr{z}),\\
\ \nu\varphi'' (\snr{z})\snr{\xi}^{2}\le\langle \partial^{2}f(x,v,z)\xi,\xi\rangle, \\
\ \snr{\partial f(x_{1},v,z)-\partial f(x_{2},v,z)}\snr{z}\le L\tilde{\omega}(\snr{x_{1}-x_{2}})\varphi(\snr{z}),\\
\ \snr{f(x,v_{1},z)-f(x,v_{2},z)}\le L \tilde{\omega}(\snr{v_{1}-v_{2}})\varphi(\snr{z}),\\
\ \snr{\partial^{2}f(x,v,z_{1}+z_{2})-\partial^{2}f(x,v,z_{1})}\le L\varphi''(\snr{z_{1}})\left(\frac{\snr{z_{2}}}{\snr{z_{1}}}\right)^{\vartheta} \ \ \mbox{for} \ \ \snr{z_{2}}<\frac{1}{2}\snr{z_{1}}.
\end{cases}
\end{flalign}
%where we set $\tilde{f}_{x,v}\colon [0,\infty)\to \mathbb{R}$, $\tilde{f}_{x,v}(t):=\tilde{f}(x,v,\snr{z})$. 
Moreover, we require that $g\colon \Omega\colon\mathbb{R}^{N\times n}\to \mathbb{R}$ is another Carath\'eodory integrand so that 
\begin{flalign}\label{assg}
\begin{cases}
\ z\mapsto g(\cdot,z) \in C^{2}(\mathbb{R}^{N\times n}\setminus \{0\})\cap C^{1}(\mathbb{R}^{N\times n}),\\
\ x\mapsto g(x,\cdot)\in C^{0,\alpha}(\Omega) \ \ \mbox{for some} \ \ \alpha \in (0,1],\\
\ \nu \varphi(\snr{z})\le g(x,z) \le L\varphi(\snr{z})\\
\ \snr{\partial g(x,z)}\snr{z}+\snr{\partial^{2} g(x,z)}\snr{z}^{2}\le L\varphi(\snr{z}),\\
\ \snr{\partial g(x,z_{1})-\partial g(x,z_{2})}\le L\varphi''
(\snr{z_{1}}+\snr{z_{2}})\snr{z_{1}-z_{2}},\\
\ \nu\varphi'' (\snr{z})\snr{\xi}^{2}\le\langle \partial^{2}g(x,z)\xi,\xi\rangle, \\
\ \snr{\partial g(x_{1},z)-\partial g(x_{2},z)}\snr{z}\le L\tilde{\omega}(\snr{x_{1}-x_{2}})\varphi(\snr{z}).
\end{cases}
\end{flalign}
Here $0<\nu< L$ are absolute constants and the above relations hold for any $x\in \Omega$, for all $z,\xi\in \mathbb{R}^{N\times n}$, $v \in \RN$ and $\tilde{\omega}\colon [0,\infty)\to [0,\infty)$ is a continuous, nondecreasing function such that
\begin{flalign}\label{omt}
\tilde{\omega}(r):=\min\left\{r^{\alpha},1\right\} \ \ \mathrm{for \ all \ }r \in (0,1) \ \mathrm{and \ some \ } \alpha \in (0,1].
\end{flalign}
Before proceeding further, let us introduce some other notation. For fixed $(x,v)\in \Omega\times \RN$, we set 
\begin{flalign}\label{aff9}
\tilde{f}_{x,v}(t):=\tilde{f}(x,v,t), \ \ t>0,
\end{flalign}
where $\tilde{f}$ is the map in \eqref{assf2} and, for $z\in \mathbb{R}^{N\times n}$,
\begin{flalign}\label{aff10}
\tilde{V}_{x,v}(z):=\left(\frac{\tilde{f}_{x,v}'(\snr{z})}{\snr{z}}\right)^{\frac{1}{2}}z,
\end{flalign}
which is the vector field defined in \eqref{vphi} with $\varphi=\tilde{f}_{x,v}$. Those positions will be useful to establish extra details on the connection between $\varphi$ and $t \mapsto \tilde{f}(\cdot,t)$, see Lemma \ref{aff} and Corollary \ref{aff1} in Section \ref{pre}.
\vspace{1mm}
\subsection{$\omega$-minima} Let us add some specifics on the concept of $\omega$-minimizer already anticipated in Section \ref{intro}. If $\mathcal{K}(\cdot)$ the integral functional in \eqref{h}, an $\omega$-minimizer of $\mathcal{K}(\cdot)$ is a map $u \in W^{1,\varphi}(\Omega,\RN)$ such that
\begin{flalign*}
\int_{B_{r}}k(x,u,Du) \ dx \le (1+\omega(r))\int_{B_{r}}k(x,w,Dw) \ dx, 
\end{flalign*}
whenever $u-w\in W^{1,\varphi}_{0}(B_{r},\RN)$ and $B_{r}\Subset \Omega$ is any ball with radius $r\le 1$. Here $\omega \colon [0,\infty)\to [0,\infty)$ is a continuous, nondecreasing function so that
\begin{flalign}\label{om}
\omega(r)\le Lr^{\gamma} \ \ \mathrm{for \ all \ }r \in (0,1)
 \ \mathrm{and \ some \ }\gamma \in (0,1],
\end{flalign}
with $L>0$. Adapting the previous definition to \eqref{fullf} and \eqref{xf}, we can conclude that an $\omega$-minimizer of $\mathcal{F}(\cdot)$ satisfies $\int_{B_{r}}f(x,u,Du) \ dx \le (1+\omega(r))\int_{B_{r}}f(x,w,Dw) \ dx$ for all $w\in u+W^{1,\varphi}_{0}(B_{r},\RN)$ and all $B_{r}\Subset \Omega$, while an $\omega$-minimizer of $\mathcal{G}(\cdot)$ matches $\int_{B_{r}}g(x,Du) \ dx \le (1+\omega(r))\int_{B_{r}}g(x,Dw) \ dx$, for any $w \in u+W^{1,\varphi}_{0}(B_{r},\RN)$ and all $B_{r}\Subset \Omega$.

\section{Preliminary results}\label{pre}
In this section we shall collect some well known results in the realm of $N$-functions and of fractional Sobolev spaces.
\vspace{1mm}
\subsection{On $N$-functions}\label{N} As pointed out in Section \ref{assec}, being $\varphi$ convex, the $\Delta_{2}$-condition implies a comparison with power-type functions in the sense that there are two exponents $s_{0}=s_{0}(\Delta_{2}(\varphi))$ and $q_{0}=q_{0}(\Delta_{2}(\varphi))$, $1<s_{0}\le q_{0}$ such that
\begin{flalign}\label{incdec}
t\mapsto \frac{\varphi(t)}{t^{s_{0}}} \ \ \mathrm{is \ non \ decreasing \ }, \quad t\mapsto \frac{\varphi(t)}{t^{q_{0}}} \ \ \mathrm{is \ non \ increasing}.
\end{flalign}
For later uses we point out that $\eqref{incdec}_{1}$ gives, for $t\ge 1$, $\varphi(t)\ge t^{s_{0}}\varphi(1)$ while $\eqref{incdec}_{2}$ renders that, for any $a\in (0,1)$ and all $t\ge 0$ there holds that $\varphi(a^{-1}t)\le a^{-q_{0}}\varphi(t)$. In particular, it is worth stressing that $\eqref{incdec}_{1}$ provides a link to the usual Sobolev space $W^{1,s_{0}}(\Omega,\RN)$: in fact it is easy to see that, if $w \in W^{1,\varphi}(\Omega,\RN)$ there holds
\begin{flalign}\label{incdec1}
\int_{\Omega}\snr{w}^{s_{0}}+\snr{Dw}^{s_{0}} \ dx \le &\int_{\Omega}\max\left\{1,\snr{w}^{s_{0}}\right\}+\max\left\{1,\snr{Dw}^{s_{0}}\right\} \ dx\nonumber \\
\le& c\int_{\Omega}1+\varphi(\snr{w})+\varphi(\snr{Dw}) \ dx,
\end{flalign}
for $c=c(\Delta_{2}(\varphi))$. Such a remark will be helpful in several occasions. We start with a result which is by now standard.
\begin{lemma}\cite{DE}\label{aff20}
Let $\varphi$ be an $N$-function with $\Delta_{2}(\varphi,\varphi^{*})<\infty$. Then, uniformly for all $\snr{z_{1}},\snr{z_{2}}\in \mathbb{R}^{N\times n}$ with $\snr{z_{1}}+\snr{z_{2}}>0$ there holds
\begin{flalign*}
\int_{0}^{1}\frac{\varphi'(\snr{[z_{1},z_{2}]_{\lambda}})}{\snr{[z_{1},z_{2}]_{\lambda}}} \ d\lambda\sim \frac{\varphi'(\snr{z_{1}}+\snr{z_{2}})}{\snr{z_{1}}+\snr{z_{2}}},
\end{flalign*}
with constants depending on $\Delta_{2}(\varphi,\varphi^{*})$.
\end{lemma}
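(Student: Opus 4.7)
My plan. Set $A := |z_1| + |z_2|$. By \eqref{phi6} and \eqref{af2} one has $\varphi'(t)/t \sim \varphi''(t) \sim \varphi(t)/t^2$ uniformly for $t>0$, so the stated equivalence is equivalent to
\[
\int_0^1 \varphi''\bigl(|[z_1,z_2]_\lambda|\bigr)\,d\lambda \;\sim\; \varphi''(A),
\]
with constants depending only on $\Delta_2(\varphi,\varphi^*)$. The key preliminary ingredient is the polynomial sandwich
\[
c^{-1}\bigl(\tfrac{s}{t}\bigr)^{q_0-2} \;\le\; \frac{\varphi''(s)}{\varphi''(t)} \;\le\; c\,\bigl(\tfrac{s}{t}\bigr)^{s_0-2}, \qquad 0<s\le t,
\]
for some $1<s_0\le q_0$, which is a direct consequence of \eqref{incdec}; in particular, $\varphi''$ is doubling.

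I would then split into cases according to whether the segment $[z_1,z_2]$ stays away from the origin. Assuming without loss of generality $|z_2|\ge |z_1|$, if $|z_1-z_2|\le |z_2|/2$ the triangle inequality yields $|[z_1,z_2]_\lambda|\sim |z_2|\sim A$ for every $\lambda\in[0,1]$, so that the doubling of $\varphi''$ produces both the upper and the lower bound at once upon integration. If instead $|z_1-z_2|>|z_2|/2$, then $|z_1-z_2|\sim A$, and I would parametrize along the segment via the identity $|[z_1,z_2]_\lambda|^2 = |z_1-z_2|^2(\lambda-\lambda_\star)^2 + h^2$, where $h$ is the distance from the origin to the affine line through $z_1,z_2$. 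The upper bound then follows by dominating the integrand pointwise through $\varphi''(|[z_1,z_2]_\lambda|)\le c\,\varphi''(A)\bigl(A/|[z_1,z_2]_\lambda|\bigr)^{2-s_0}$ and observing that $s_0>1$ is exactly what makes $\int_0^1 |[z_1,z_2]_\lambda|^{s_0-2}\,d\lambda \lesssim A^{s_0-2}$, a one-dimensional computation. For the lower bound one restricts the integration to a subinterval on which $|[z_1,z_2]_\lambda|$ is comparable to $\max(|z_1|,|z_2|)\sim A$, or, in the almost-antipodal subcase $z_1\approx -z_2$, one exploits the identity $\int_0^A \varphi''(s)\,ds = \varphi'(A)\sim A\,\varphi''(A)$ (cf.\ \eqref{phi6}) after a change of variable along the segment to recover the matching lower estimate.

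The main obstacle is precisely the second case, when the segment $[z_1,z_2]$ passes close to the origin: the function $\varphi''$ may blow up there (as in the $p$-Laplace setting with $p<2$), and it is essential to use the integrability $s_0>1$ granted by $\Delta_2(\varphi^*)$ to convert the pointwise divergence into a sharp $\sim \varphi''(A)$ estimate. Everything else is a careful bookkeeping of constants that depend only on $\Delta_2(\varphi,\varphi^*)$, as required.
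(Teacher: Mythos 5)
The paper does not prove this lemma itself but cites it from Diening--Ettwein \cite{DE}, so there is no in-paper argument to compare against; judging your attempt on its own merits, the overall strategy is correct and close in spirit to the cited source. The polynomial sandwich $c^{-1}(s/t)^{q_{0}-2}\le \psi(s)/\psi(t)\le c(s/t)^{s_{0}-2}$ for $0<s\le t$, the dichotomy according to whether the segment $[z_1,z_2]$ stays away from the origin, the upper bound via the one-dimensional integral $\int_0^1|\lambda-\lambda_\star|^{s_0-2}\,d\lambda<\infty$ (where $s_0>1$ is precisely what $\Delta_2(\varphi^{*})<\infty$ buys), and the lower bound by restricting to the quarter of the interval adjacent to whichever endpoint has $|z_i|\sim A$, together do yield both directions of the estimate.

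Two remarks on tightening the write-up. First, you pass from $\varphi'(t)/t$ to $\varphi''(t)$ by invoking \eqref{phi6}, whose implicit constants depend on $c_\varphi$; but the lemma claims constants depending \emph{only} on $\Delta_2(\varphi,\varphi^*)$. This step is unnecessary: the sandwich you state for $\varphi''$ holds verbatim for $\psi(t):=\varphi'(t)/t$, since $\psi(t)\sim\varphi(t)/t^2$ already follows from $\varphi(t)\sim t\varphi'(t)$ (which is $\eqref{af2}_1$, a pure $\Delta_2(\varphi,\varphi^*)$ fact) combined with \eqref{incdec}. Working directly with $\psi$ removes the spurious $c_\varphi$ dependence and gives the constant exactly as stated. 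Second, the separate treatment of the ``almost-antipodal'' subcase is superfluous: in Case~2, say $|z_2|\ge |z_1|$ so $|z_2|\ge A/2$; for $\lambda\in[0,1/4]$ one has $|[z_1,z_2]_\lambda|\ge |z_2|-\tfrac{1}{4}|z_1-z_2|\ge |z_2|-\tfrac14 A\ge A/4$, and of course $|[z_1,z_2]_\lambda|\le A$, so $\psi(|[z_1,z_2]_\lambda|)\sim\psi(A)$ on a set of measure $1/4$ regardless of whether $z_1\approx -z_2$. With those two adjustments the proof is clean and the dependence matches the statement.
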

The connections between the vector field $V_{\varphi}$ and the monotonicity properties of the map $\varphi'(\snr{z})/\snr{z}$ are best reflected in the following lemma.
\begin{lemma}\cite{DVS}\label{aff21}
Let $\varphi$ be as described in Subsection \ref{N} and $V_{\varphi}$ be as in \eqref{vphi}. Then
\begin{flalign}\label{phi7}
\left \langle \frac{\varphi'(\snr{z_{1}})}{\snr{z_{1}}}z_{1}-\frac{\varphi'(\snr{z_{2}})}{\snr{z_{2}}}z_{2},z_{1}-z_{2}\right \rangle\sim\snr{V_{\varphi}(z_{1})-V_{\varphi}(z_{2})}^{2}\sim \varphi''(\snr{z_{1}}+\snr{z_{2}})\snr{z_{1}-z_{2}},
\end{flalign}
for all $z_{1},z_{2}\in \mathbb{R}^{N\times n}$ with constants implicit in "$\sim$" depending on $n,N,c_{\varphi}$. Moreover, $\snr{V_{\varphi}(z)}^{2}$ is comparable to $\varphi(\snr{z})$ in the sense that
\begin{flalign}\label{phi9}
\snr{V_{\varphi}(z)}^{2}\sim \varphi(\snr{z}),
\end{flalign}
with constants depending on $\Delta_{2}(\varphi)$.
\end{lemma}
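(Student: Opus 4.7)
The plan is to treat the two assertions separately: \eqref{phi9} reduces at once to the structural identities for $N$-functions, while \eqref{phi7} requires a chain-rule computation combined with the averaging statement of Lemma \ref{aff20} along the segment $[z_{1},z_{2}]_{\lambda}$.

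For \eqref{phi9} I would just unfold the definition \eqref{vphi} to obtain $\snr{V_{\varphi}(z)}^{2}=\snr{z}\varphi'(\snr{z})$, and then invoke the first relation in \eqref{af2}, which immediately gives $\snr{z}\varphi'(\snr{z})\sim\varphi(\snr{z})$ with constants depending only on $\Delta_{2}(\varphi,\varphi^{*})$.

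For \eqref{phi7} I would introduce the auxiliary vector field $A(z):=\tfrac{\varphi'(\snr{z})}{\snr{z}}z$, so that the leftmost quantity in \eqref{phi7} is exactly $\langle A(z_{1})-A(z_{2}),z_{1}-z_{2}\rangle$. A direct differentiation away from the origin yields the split
$$
DA(z)=\frac{\varphi'(\snr{z})}{\snr{z}}\Bigl(I-\frac{z\otimes z}{\snr{z}^{2}}\Bigr)+\varphi''(\snr{z})\frac{z\otimes z}{\snr{z}^{2}},
$$
whose eigenvalues are either $\varphi'(\snr{z})/\snr{z}$ or $\varphi''(\snr{z})$; by \eqref{phi6} these two quantities are comparable, so $\langle DA(z)\xi,\xi\rangle\sim\varphi''(\snr{z})\snr{\xi}^{2}$ uniformly in $z\neq 0$, with constants depending only on $c_{\varphi}$. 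The fundamental theorem of calculus, applied to $\lambda\mapsto A([z_{1},z_{2}]_{\lambda})$ and paired with $z_{1}-z_{2}$, then produces
\begin{flalign*}
\langle A(z_{1})-A(z_{2}),z_{1}-z_{2}\rangle
&=\int_{0}^{1}\langle DA([z_{1},z_{2}]_{\lambda})(z_{1}-z_{2}),z_{1}-z_{2}\rangle\,d\lambda\\
&\sim\snr{z_{1}-z_{2}}^{2}\int_{0}^{1}\varphi''(\snr{[z_{1},z_{2}]_{\lambda}})\,d\lambda,
\end{flalign*}
and rewriting $\varphi''(t)\sim\varphi'(t)/t$ via \eqref{phi6} before invoking Lemma \ref{aff20} turns the remaining integral into $\varphi'(\snr{z_{1}}+\snr{z_{2}})/(\snr{z_{1}}+\snr{z_{2}})\sim\varphi''(\snr{z_{1}}+\snr{z_{2}})$, which gives the right equivalence in \eqref{phi7}.

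The middle equivalence is obtained by exactly the same program applied to $V_{\varphi}$: a direct calculation shows that $DV_{\varphi}(z)$ has singular values all comparable to $\sqrt{\varphi''(\snr{z})}$, once \eqref{phi6} is used to identify $\varphi'(\snr{z})/\snr{z}$ with $\varphi''(\snr{z})$. The upper bound $\snr{V_{\varphi}(z_{1})-V_{\varphi}(z_{2})}^{2}\lesssim\varphi''(\snr{z_{1}}+\snr{z_{2}})\snr{z_{1}-z_{2}}^{2}$ then follows from the fundamental theorem of calculus, Cauchy--Schwarz and Lemma \ref{aff20}. The matching lower bound is the delicate point: here I would not take the norm inside the integral, but rather bound $\snr{V_{\varphi}(z_{1})-V_{\varphi}(z_{2})}^{2}$ from below in terms of $\langle A(z_{1})-A(z_{2}),z_{1}-z_{2}\rangle$ through the pointwise algebraic identity $\langle DA(z)\xi,\xi\rangle\sim\snr{DV_{\varphi}(z)\xi}^{2}$ combined with a convexity/polarization argument along the segment $[z_{1},z_{2}]_{\lambda}$. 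This last conversion from the bilinear form attached to $A$ back to the squared norm of a difference of $V_{\varphi}$ is the step I expect to require the most care; everything else is a direct application of the $N$-function toolkit combined with Lemma \ref{aff20}.
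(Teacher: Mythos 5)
The paper itself gives no proof of Lemma~\ref{aff21}: it simply cites \cite{DVS}. So the comparison is against the argument that the cited reference supplies, which is essentially the Diening--Ettwein/Diening--Stroffolini--Verde computation with the $N$-function shift lemma (Lemma~\ref{aff20} here). Your plan follows that template for the outer equivalence and for \eqref{phi9}, and those two parts are correct as written: the identity $\snr{V_{\varphi}(z)}^{2}=\snr{z}\varphi'(\snr{z})$ combined with $\eqref{af2}_{1}$ gives \eqref{phi9}; and, writing $A(z)=\tfrac{\varphi'(\snr{z})}{\snr{z}}z$, the spectral decomposition of $DA$, the comparability $\varphi'(t)/t\sim\varphi''(t)$ from \eqref{phi6}, the fundamental theorem of calculus, and Lemma~\ref{aff20} give
$\langle A(z_{1})-A(z_{2}),z_{1}-z_{2}\rangle\sim\varphi''(\snr{z_{1}}+\snr{z_{2}})\snr{z_{1}-z_{2}}^{2}$ exactly as you describe. (Incidentally, the exponent on $\snr{z_{1}-z_{2}}$ in \eqref{phi7} should be $2$; you correctly treat it as squared.)

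The genuine gap is in the middle equivalence, specifically in the lower bound
$\snr{V_{\varphi}(z_{1})-V_{\varphi}(z_{2})}^{2}\gtrsim\langle A(z_{1})-A(z_{2}),z_{1}-z_{2}\rangle$.
You correctly note that the pointwise identity $\langle DA(z)\xi,\xi\rangle\sim\snr{DV_{\varphi}(z)\xi}^{2}$ holds (both quadratic forms have the same radial/tangential eigenspaces with comparable eigenvalues), and you correctly flag the remaining step as delicate, but what you actually need is the reverse--Jensen inequality
$\bigl|\int_{0}^{1}DV_{\varphi}(\gamma(\lambda))\xi\,d\lambda\bigr|^{2}\gtrsim\int_{0}^{1}\snr{DV_{\varphi}(\gamma(\lambda))\xi}^{2}\,d\lambda$
for the vector-valued integrand along the segment $\gamma(\lambda)=[z_{1},z_{2}]_{\lambda}$, and polarization alone does not give this: unlike the scalar quantity $\langle DA(\gamma(\lambda))\xi,\xi\rangle\ge 0$, the vector $DV_{\varphi}(\gamma(\lambda))\xi$ can in principle rotate along the segment and cancel. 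To close the argument you must exploit the radial structure of $V_{\varphi}$: the standard proof (cf.\ \cite{DE}, and the $p$-power case in \cite{acfu}) normalizes $\snr{z_{1}}\ge\snr{z_{2}}$, decomposes $z_{2}$ into its components parallel and orthogonal to $z_{1}$, and checks by direct computation that each component of $V_{\varphi}(z_{1})-V_{\varphi}(z_{2})$ separately dominates the corresponding piece of $\sqrt{\varphi'(\snr{z_{1}}+\snr{z_{2}})/(\snr{z_{1}}+\snr{z_{2}})}\,(z_{1}-z_{2})$, treating the cases $\snr{z_{2}}\le\tfrac12\snr{z_{1}}$ and $\snr{z_{2}}>\tfrac12\snr{z_{1}}$ differently. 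Until that step is supplied, the middle equivalence in \eqref{phi7} is not proved. One further small mismatch: because your proof routes through Lemma~\ref{aff20}, the constants would also depend on $\Delta_{2}(\varphi,\varphi^{*})$, not only on $n,N,c_{\varphi}$; this is likely a harmless looseness carried over from \cite{DVS}, but worth stating.
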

Let us present now an intrinsic variant on the classical \cite[Lemma 6.1]{G}. Even if known, we did not manage to trace it in the literature.
\begin{lemma}\label{L0}
Let $h\colon [\varrho_{0},\varrho_{1}]\to \mathbb{R}$ be nonnegative and bounded and $f\colon [0,\infty)\to [0,\infty)$ be a function such that $f(\lambda t)\le \tilde{c}\lambda^{-\sigma}f(t)$ for all $\lambda \in (0,1]$, all $t \in [0,\infty)$, some $\sigma>0$ and a positive $\tilde{c}$ depending only on the structure of $f$. Assume that
\begin{flalign*}
h(r)\le \theta h(s)+Af(s-r)+B,
\end{flalign*}
holds for some $\theta\in (0,1)$ and all $\varrho_{0}\le r<s\le \varrho_{1}$, where $A,B\ge 0$ are absolute constants. Then the following inequality holds with $c=c(\tilde{c},\theta,\sigma)$:
\begin{flalign*}
h(\varrho_{0})\le c\left(Af(\varrho_{1}-\varrho_{0})+B\right).
\end{flalign*}
\end{lemma}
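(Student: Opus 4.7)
The plan is to run the standard Giaquinta--Giusti hole-filling iteration, but with the power-type tail condition on $f(\cdot)$ replaced by the abstract scaling $f(\lambda t)\le \tilde c\,\lambda^{-\sigma} f(t)$. So the structure is classical; the only point that needs care is how the assumed scaling of $f$ replaces the explicit computation one would do for $f(t)=t^{-\sigma}$.

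First I would fix $\tau\in(0,1)$ to be chosen later and define the increasing sequence of radii
\[
r_{0}:=\varrho_{0},\qquad r_{i+1}:=r_{i}+(1-\tau)\tau^{i}(\varrho_{1}-\varrho_{0}),\quad i\in\mathbb{N},
\]
so that $r_{i}\uparrow \varrho_{1}$ and $r_{i+1}-r_{i}=(1-\tau)\tau^{i}(\varrho_{1}-\varrho_{0})$. Applying the standing hypothesis with $r=r_{i}$, $s=r_{i+1}$ and then using the scaling of $f$ with $\lambda=(1-\tau)\tau^{i}\in(0,1]$ and $t=\varrho_{1}-\varrho_{0}$ gives
\[
h(r_{i})\le \theta\, h(r_{i+1})+A\,\tilde c\,(1-\tau)^{-\sigma}\tau^{-i\sigma}f(\varrho_{1}-\varrho_{0})+B.
\]

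Next I would iterate this inequality $k$ times starting from $i=0$, obtaining
\[
h(\varrho_{0})\le \theta^{k}h(r_{k})+A\,\tilde c\,(1-\tau)^{-\sigma}f(\varrho_{1}-\varrho_{0})\sum_{i=0}^{k-1}\bigl(\theta\tau^{-\sigma}\bigr)^{i}+B\sum_{i=0}^{k-1}\theta^{i}.
\]
The whole game is now to choose $\tau$ so that both series converge. Since $\theta\in(0,1)$ and $\sigma>0$, we can fix $\tau\in(\theta^{1/\sigma},1)$, which depends only on $\theta$ and $\sigma$; then $\theta\tau^{-\sigma}<1$ and the first geometric sum is dominated by $(1-\theta\tau^{-\sigma})^{-1}$, while the second is dominated by $(1-\theta)^{-1}$. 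Since $h$ is bounded and $\theta<1$, the remainder term $\theta^{k}h(r_{k})$ vanishes as $k\to\infty$, and passing to the limit yields
\[
h(\varrho_{0})\le \frac{A\,\tilde c\,(1-\tau)^{-\sigma}}{1-\theta\tau^{-\sigma}}\,f(\varrho_{1}-\varrho_{0})+\frac{B}{1-\theta},
\]
which is the claimed estimate with a constant $c=c(\tilde c,\theta,\sigma)$.

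There is no real obstacle here; the only subtle point compared to the textbook version is that one must invoke the abstract scaling hypothesis twice, once to compare $f(r_{i+1}-r_{i})$ with $f(\varrho_{1}-\varrho_{0})$ and again implicitly in choosing $\tau$ close enough to $1$ so that $\theta\tau^{-\sigma}<1$. Boundedness of $h$ is used exactly once, to kill the term $\theta^{k}h(r_{k})$ in the limit, so the hypothesis cannot be dropped unless one has independent control of $h$ at the endpoint $\varrho_{1}$.
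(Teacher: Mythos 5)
Your proof is correct and follows essentially the same iterative hole-filling argument as the paper: same geometric partition of $[\varrho_{0},\varrho_{1}]$, same use of the scaling hypothesis to replace $f(r_{i+1}-r_i)$ by $\tilde c(1-\tau)^{-\sigma}\tau^{-i\sigma}f(\varrho_{1}-\varrho_{0})$, and the same choice of $\tau$ making $\theta\tau^{-\sigma}<1$. Your version is slightly more careful in keeping the $B$-term under the plain geometric series $\sum\theta^{i}$, but the substance and the resulting constant agree.
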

\begin{proof}
The proof is an easy modification to the one of \cite[Lemma 6.1]{G}, we report it for the sake of completeness. Consider the sequence defined as
\begin{flalign*}\begin{cases}
\ r_{0}=\varrho_{0}\\
\ r_{i+1}-r_{i}=(1-\lambda)\lambda^{i}(\varrho_{1}-\varrho_{0}) \ \ \mathrm{for} \ i \in \N
\end{cases},
\end{flalign*}
where $\lambda \in (0,1)$ will be fixed later on during the proof. By induction and using the properties of $f$, it is easy to see that, for $k \in \N$,
\begin{flalign*}
h(\varrho_{0})\le& \theta^{k}h(r_{k})+\sum_{i=0}^{k-1}\theta^{i}\left[Af((1-\lambda)\lambda^{i}(\varrho_{1}-\varrho_{0}))+B\right]\\
\le &\theta^{k}h(r_{k})+\tilde{c}(1-\lambda)^{-\sigma}\left(f(\varrho_{1}-\varrho_{o})+B\right)\sum_{i=0}^{k-1}\theta^{i}\lambda^{-\sigma i}.
\end{flalign*}
Now fix $\lambda \in (0,1)$ in such a way that $\lambda^{-\sigma}\theta<1$. In correspondence of such a choice we have that the geometric series $\sum_{i=0}^{k-1}\theta^{i}\lambda^{-\sigma i}$ converges and therefore, passing to the limit for $k\to \infty$ we obtain the conclusion for $c(\tilde{c},\theta,\sigma)=\tilde{c}(1-\lambda)^{-\sigma}(1-\theta\lambda^{-\sigma})^{-1}$.
\end{proof}
The following is the by now standard Sobolev-Poincar\'e's inequality for $N$-functions. It is proved in \cite{DE} for the "Poincar\'e-Wirtinger" case, and, after a quick inspection of the proof it is clear that it crucially relies on the estimate $\snr{w-(w)_{B_{\varrho}}}\le c\int_{B_{\varrho}}\frac{\snr{Dw(y)}}{\snr{x-y}^{n-1}} \ dy$, with $c=c(n,N)$, see \cite{MZ}. As shown in \cite[Lemma 7.14]{GT}, a totally analogous estimate holds also if $\left. w \right |_{\partial B_{\varrho}}=0$, so we have both the inequalities as in the classical Sobolev setting. 
\begin{proposition}\cite{DE}\label{P4}
Let $\varphi$ be an $N$-function with $\Delta_{2}(\varphi,\varphi^{*})<\infty$. Then there exists $\theta=\theta(\Delta_{2}(\varphi,\varphi^{*})) \in (0,1)$ and $c=c(n,N,\Delta_{2}(\varphi,\varphi^{*}))$ such that, if $B_{\varrho}\Subset \Omega$, for all $w\in W^{1,\varphi}(B_{\varrho},\RN)$ there holds
\begin{flalign}\label{spa}
\mint_{B_{\varrho}}\varphi\left(\left |\frac{w-(w)_{B_{\varrho}}}{\varrho}\right |\right) \ dx \le c\left(\mint_{B_{\varrho}}\varphi(\snr{Dw})^{\theta} \ dx\right)^{1/\theta}.
\end{flalign}
Moreover, if $w \in W^{1,\varphi}_{0}(B_{\varrho},\RN)$, a similar inequality holds:
\begin{flalign}\label{sp0}
\mint_{B_{\varrho}}\varphi(\snr{w}/\varrho)\ dx \le c\left(\mint_{B_{\varrho}}\varphi(\snr{Dw})^{\theta} \ dx\right)^{1/\theta},
\end{flalign}
where $c$ and $\theta$ have the dependencies specified before.
\end{proposition}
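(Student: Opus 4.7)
My plan is to derive the Orlicz Sobolev--Poincar\'e inequality \eqref{spa} (and its zero--trace analogue \eqref{sp0}) from its classical Lebesgue--space counterpart via the power envelopes furnished by the $\Delta_2$ hypotheses on $\varphi$ and $\varphi^*$. The argument will proceed in three steps.

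\textbf{Step 1 (pointwise reduction).} The estimate from \cite{MZ}, combined with \cite[Lemma 7.14]{GT} for the zero--trace case, yields
\[
\snr{w(x)-(w)_{B_\varrho}}\le c \int_{B_\varrho}\frac{\snr{Dw(y)}}{\snr{x-y}^{n-1}}\,dy \qquad\text{for a.e.\ }x\in B_\varrho,
\]
(respectively the analogous bound on $\snr{w(x)}$ when $w\in W^{1,\varphi}_0(B_\varrho,\RN)$). Hence \eqref{spa} and \eqref{sp0} reduce to the same Riesz--potential estimate, and I may treat both cases simultaneously.

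\textbf{Step 2 (classical Sobolev--Poincar\'e with self--improvement).} The $\Delta_2(\varphi,\varphi^*)<\infty$ hypothesis yields, via \eqref{incdec}, exponents $1<s_0\le q_0$ depending only on $\Delta_2(\varphi,\varphi^*)$ such that $\varphi(t)\sim t^{s_0}$ near the origin and $\varphi(t)\sim t^{q_0}$ at infinity, with the one--sided power bounds going in the directions prescribed by $\eqref{incdec}_{1,2}$. In particular, $w$ belongs to $W^{1,s_0}_{\mathrm{loc}}(\Omega,\RN)$ by \eqref{incdec1}. Applying the classical Sobolev--Poincar\'e inequality to the exponent pair $(p_0,s_0)$, with the self--improvement exponent
\[
p_0:=\frac{n s_0}{n+s_0}\in(1,s_0),
\]
the pointwise bound of Step 1 yields
\[
\left(\mint_{B_\varrho}\left(\frac{\snr{w-(w)_{B_\varrho}}}{\varrho}\right)^{s_0}dx\right)^{1/s_0}\le c \left(\mint_{B_\varrho}\snr{Dw}^{p_0}dx\right)^{1/p_0}.
\]
Setting $\theta:=p_0/s_0=n/(n+s_0)\in(0,1)$, this is equivalent to
\[
\mint_{B_\varrho}\left(\frac{\snr{w-(w)_{B_\varrho}}}{\varrho}\right)^{s_0}dx\le c \left(\mint_{B_\varrho}\snr{Dw}^{s_0\theta}dx\right)^{1/\theta}.
\]

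\textbf{Step 3 (return to $\varphi$).} Here I use the two--sided power envelopes from \eqref{incdec} to convert the polynomial quantities back into $\varphi$--modular quantities. Splitting $B_\varrho$ along the level sets $\{\snr{Dw}\le 1\}$, $\{\snr{Dw}>1\}$ and the analogous ones for $\snr{w-(w)_{B_\varrho}}/\varrho$, and combining the direction--appropriate envelope with Young's inequality \eqref{you} to absorb the resulting constant terms, one concludes that the inequality in Step 2 implies \eqref{spa} with $\theta=\theta(\Delta_2(\varphi,\varphi^*))\in(0,1)$. The zero--trace case \eqref{sp0} is handled identically, starting from the estimate \cite[Lemma 7.14]{GT} in place of \cite{MZ}.

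\textbf{Main obstacle.} The principal technical point is Step 3: the direction of the power envelope in \eqref{incdec} switches at $t=1$, so the passage between polynomial and $\varphi$--quantities must be done piecewise, separately on sublevel and superlevel sets of $\snr{Dw}$ and $\snr{w-(w)_{B_\varrho}}/\varrho$. Keeping the final constant and the exponent $\theta$ dependent only on $\Delta_2(\varphi,\varphi^*)$ while absorbing the lower--order terms generated by this splitting is the delicate bookkeeping.
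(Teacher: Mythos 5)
The paper does not actually supply a proof of Proposition \ref{P4}: it simply invokes \cite{DE} for the ``Poincar\'e--Wirtinger'' version \eqref{spa}, remarks that the only input the argument of \cite{DE} uses is the Riesz--potential estimate $|w-(w)_{B_\varrho}|\le c\int_{B_\varrho}|Dw(y)|\,|x-y|^{1-n}\,dy$, and observes that \cite[Lemma 7.14]{GT} supplies the analogous estimate for $w\in W^{1,\varphi}_0(B_\varrho,\RN)$, so \eqref{sp0} follows by the same argument. Your Step~1 is therefore exactly the paper's observation, but your Steps~2--3 attempt a self--contained derivation that the paper does not carry out.

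Unfortunately Step~3 has a genuine gap, and the ``delicate bookkeeping'' you flag is in fact an obstruction. The one--sided power bounds coming from \eqref{incdec} are, for $t\ge 1$,
\[
\varphi(1)\,t^{s_0}\ \le\ \varphi(t)\ \le\ \varphi(1)\,t^{q_0},
\]
with $s_0\le q_0$ both determined only by $\Delta_2(\varphi,\varphi^*)$. To estimate the left side of \eqref{spa} on the superlevel set $\{|w-(w)_{B_\varrho}|/\varrho>1\}$ you must bound $\varphi(|w-(w)_{B_\varrho}|/\varrho)$ from above, and the power envelope gives only the crude bound $\varphi(1)\,(|w-(w)_{B_\varrho}|/\varrho)^{q_0}$. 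But the classical Sobolev--Poincar\'e inequality obtained from Step~2 controls $w-(w)_{B_\varrho}$ at most in $L^{s_0^*}$ with $s_0^*=ns_0/(n-s_0)$ when $s_0<n$, and nothing forces $q_0\le s_0^*$: the ratio $q_0/s_0$ can be as large as one likes while $\Delta_2(\varphi,\varphi^*)<\infty$ still holds (take, say, $\varphi$ that behaves like $t^2$ for small $t$ and like $t^{100}$ for large $t$, with $n=3$, so $s_0^*=6\ll q_0$). In that regime the quantity $\mint_{\{|w-(w)_{B_\varrho}|/\varrho>1\}}(|w-(w)_{B_\varrho}|/\varrho)^{q_0}\,dx$ is simply not controlled by any Lebesgue--scale Sobolev--Poincar\'e estimate in terms of $Dw$, and the piecewise reduction to powers cannot close. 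More precisely, tracking the exponent constraints shows that the choice of $\theta$ needed to recover the superlevel contribution ($\theta\ge nq_0/(s_0(n+q_0))$) and the one needed for the sublevel contribution ($\theta\le n/(n+s_0)$) are simultaneously admissible only when $q_0\le s_0$, i.e.\ when $\varphi$ is essentially a single power. This is why the proof in \cite{DE} works with the $N$--function $\varphi$ throughout (via a Hedberg--type truncation of the Riesz potential at an $x$--dependent level and the convexity/$\Delta_2$ structure of $\varphi$), rather than passing through a fixed power scale: replacing $\varphi$ by its worst--case power envelope is too lossy.

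A secondary remark: the $\theta$ your Step~2 produces is $n/(n+s_0)$, which depends on $n$, whereas the statement you are proving asserts $\theta=\theta(\Delta_2(\varphi,\varphi^*))$ only. This suggests that even the template of your reduction does not align with the claimed dependencies. The safe route here is the paper's: quote \cite{DE} for \eqref{spa}, and note that its proof hinges only on the pointwise Riesz--potential bound, which \cite[Lemma 7.14]{GT} also supplies for zero--trace maps, yielding \eqref{sp0}.
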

Given all the analogies with the standard growth case, it is natural to expect that minimizers of $\varphi$-functionals enjoy good regularity properties, depending on how close the integrand is to an $N$-function. In this perpective, we have this first result in the realm of fractional Sobolev spaces.
\begin{proposition}\label{P1}
Let $B_{r}\Subset \Omega$ be any ball and $v\in W^{1,\varphi}(B_{r},\RN)$ be a solution to the Dirichlet problem
\begin{flalign}\label{fun1}
u+W^{1,\varphi}_{0}(B_{r},\RN)\ni w \mapsto \min\int_{B_{r}}g(x,Dw) \ dx,
\end{flalign}
where $g$ satisfies \eqref{assg} and $u\in W^{1,\varphi}(\Omega,\RN)$. Then, if $B_{\varrho}\subset B_{r}$ is such that $B_{20\varrho}\Subset B_{r}$ and if $h \in \mathbb{R}^{n}\setminus \{0\}$ with $\snr{h}\le \varrho$, then
\begin{flalign}\label{estv}
\int_{B_{\varrho}}\snr{\tau_{h}V_{\varphi}(Dv)}^{2} \ dx \le c\left(\varrho^{-2}\snr{h}^{2}+\varrho^{-1}\snr{h}^{1+\alpha}+\snr{h}^{2\alpha}\right)\int_{B_{2\varrho}}\snr{V_{\varphi}(Dv)}^{2} \ dx,
\end{flalign}
for $c=c(n,N,\nu,L,\Delta_{2}(\varphi),\Delta_{2}(\varphi,\varphi^{*}))$.
\end{proposition}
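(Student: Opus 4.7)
The natural strategy is the classical difference-quotient Caccioppoli technique, adapted to the $N$-function setting. Since $v$ solves (\ref{fun1}), it satisfies $\int_{B_{r}} \langle \partial g(x, Dv), D\psi\rangle\, dx = 0$ for all $\psi \in W^{1,\varphi}_{0}(B_{r}, \mathbb{R}^{N})$. Setting $v_{h}(x) := v(x+h)$ and using a change of variables, the map $v_{h}$ satisfies the analogous identity with $\partial g(x, Dv)$ replaced by $\partial g(x+h, Dv_{h})$, and subtracting produces the difference identity
\begin{flalign*}
\int \bigl\langle \partial g(x+h, Dv_{h}) - \partial g(x, Dv), D\psi\bigr\rangle\, dx = 0,
\end{flalign*}
valid whenever $\psi$ is supported in $B_{r}\cap(B_{r}-h)$. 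I then pick $\eta\in C^{\infty}_{c}(B_{2\varrho})$ with $\eta\equiv 1$ on $B_{\varrho}$, $0\le\eta\le 1$ and $\snr{D\eta}\le c/\varrho$; thanks to $B_{20\varrho}\Subset B_{r}$ and $\snr{h}\le\varrho$ the test function $\psi := \eta^{2}\tau_{h}v$ is admissible. Computing $D\psi = 2\eta\,D\eta\otimes\tau_{h}v + \eta^{2}\tau_{h}Dv$ and splitting
\[
\partial g(x+h, Dv_{h}) - \partial g(x, Dv) = [\partial g(x+h, Dv_{h}) - \partial g(x+h, Dv)] + [\partial g(x+h, Dv) - \partial g(x, Dv)]
\]
separates the ellipticity contribution from the $x$-regularity contribution.

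The principal term pairs the ellipticity piece with $\eta^{2}\tau_{h}Dv$: the monotonicity in (\ref{assg})$_{6}$ combined with Lemma \ref{aff20} and (\ref{phi7}) yields the lower bound $c\nu \int \eta^{2}\snr{\tau_{h}V_{\varphi}(Dv)}^{2}\,dx$, which is precisely the target quantity. The three remaining cross pairings move to the right-hand side and are handled by Young's inequality weighted by $\varphi''(\snr{Dv}+\snr{Dv_{h}})$: the frozen-$z$ piece paired with $\eta^{2}\tau_{h}Dv$ is, via (\ref{assg})$_{7}$ and (\ref{omt}), dominated by $\snr{h}^{\alpha}\varphi'(\snr{Dv})\snr{\tau_{h}Dv}$, which splits through $\varphi'(t)^{2}/\varphi''(t)\sim \varphi(t)$ (cf. (\ref{af2}) and (\ref{phi6})) into an absorbable part and the remainder $c_{\delta}\snr{h}^{2\alpha}\varphi(\snr{Dv})$; the ellipticity piece paired with $2\eta\,D\eta\otimes\tau_{h}v$, bounded via (\ref{assg})$_{5}$ by $L\varphi''(\snr{Dv}+\snr{Dv_{h}})\snr{\tau_{h}Dv}$, leaves after weighted Cauchy--Schwarz a residue $\int\snr{D\eta}^{2}\varphi''(\snr{Dv}+\snr{Dv_{h}})\snr{\tau_{h}v}^{2}\,dx$; the mixed cross term from the frozen-$z$ piece paired with $2\eta\,D\eta\otimes\tau_{h}v$ is controlled by $(\snr{h}^{\alpha}/\varrho)\int\eta\,\varphi'(\snr{Dv})\snr{\tau_{h}v}\,dx$.

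Every non-absorbable residual integral is then reduced to $\int\varphi(\snr{Dv})\,dx$ by means of the pointwise identity $\tau_{h}v(x) = h\cdot\int_{0}^{1}Dv(x+\lambda h)\,d\lambda$, Jensen's inequality for the convex $\varphi$, and the chain $\varphi(t)\sim t\varphi'(t)\sim t^{2}\varphi''(t)$; these extract the required powers of $\snr{h}$ and generate the three coefficients $\varrho^{-2}\snr{h}^{2}$, $\varrho^{-1}\snr{h}^{1+\alpha}$ and $\snr{h}^{2\alpha}$ appearing in (\ref{estv}). Picking $\delta$ small to absorb every contribution proportional to $\int\eta^{2}\snr{\tau_{h}V_{\varphi}(Dv)}^{2}\,dx$ into the lower bound, restricting the left-hand side to $B_{\varrho}$ where $\eta\equiv 1$, and converting $\varphi(\snr{Dv})$ into $\snr{V_{\varphi}(Dv)}^{2}$ via (\ref{phi9}) gives (\ref{estv}). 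The main technical obstacle is the delicate weighting of Young's inequality by $\varphi''(\snr{Dv}+\snr{Dv_{h}})$, which must be calibrated so that the absorbable pieces exactly match the monotonicity lower bound while the remainders cleanly decouple the $\snr{h}$-powers from the $N$-function geometry; a shifted $N$-function computation in the spirit of Diening--Ettwein would streamline the bookkeeping, but the direct $\varphi$-$\varphi^{*}$ Young inequality is sufficient to reach (\ref{estv}).
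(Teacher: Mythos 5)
The paper's proof of Proposition~\ref{P1} is essentially a one-line citation: it observes that $v$ satisfies the Euler--Lagrange system $\int_{B_r}\partial g(x,Dv)\cdot D\xi\,dx=0$ and that the structure conditions \eqref{assg} put this within the scope of Diening--Ettwein's Theorem~11 in \cite{DE}, whose conclusion is exactly \eqref{estv}. You instead attempt to reconstruct the underlying difference-quotient argument from scratch. Your outline---shift the Euler--Lagrange identity, test with $\eta^{2}\tau_{h}v$, split $\partial g(x+h,Dv_{h})-\partial g(x,Dv)$ into the ellipticity increment and the $x$-modulus increment, extract $\int\eta^{2}|\tau_{h}V_{\varphi}(Dv)|^{2}$ via Lemma~\ref{aff20} and \eqref{phi7}, and move the cross terms to the right---is the correct strategy and is at this level the same route taken in \cite{DE}.

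There is, however, a genuine gap in the reduction of the residual $\int|D\eta|^{2}\varphi''(|Dv|+|Dv_{h}|)|\tau_{h}v|^{2}\,dx$ to $\varrho^{-2}|h|^{2}\int\varphi(|Dv|)\,dx$. The tools you invoke---the identity $\tau_{h}v(x)=h\cdot\int_{0}^{1}Dv(x+\lambda h)\,d\lambda$, Jensen, and $\varphi(t)\sim t^{2}\varphi''(t)$---extract the factor $|h|^{2}$ but leave the integrand $\varphi''\bigl(|Dv(x)|+|Dv(x+h)|\bigr)\,|Dv(x+\lambda h)|^{2}$. The relation $\varphi''(a)\,b^{2}\sim\varphi(b)$ requires $a\sim b$; this holds at $\lambda\in\{0,1\}$ but not for interior $\lambda$. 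Concretely, for $\varphi(t)=t^{p}$ with $p<2$ (allowed here, since $s_{0}$ may be close to $1$) the weight $\varphi''$ is decreasing, and $\bigl(|Dv(x)|+|Dv(x+h)|\bigr)^{p-2}|Dv(x+\lambda h)|^{2}$ can vastly exceed any $\varphi$ of these three gradients whenever the intermediate gradient is large relative to the endpoints; a plain Young inequality with $(\varphi,\varphi^{*})$ does not close this, and Fubini in $\lambda$ does not save it either. This is precisely the step where the shifted $N$-function calculus of Diening and Ettwein (the pair $(\varphi_{a},(\varphi_{a})^{*})$ together with the shift-change lemmas and the associated Poincar\'e inequality) is needed, so your closing remark that ``the direct $\varphi$-$\varphi^{*}$ Young inequality is sufficient'' is too optimistic. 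A minor further point: with a cutoff supported in $B_{2\varrho}$, the $Dv_{h}$-contributions land in $B_{2\varrho}+h$, so extracting the stated radius $B_{2\varrho}$ on the right of \eqref{estv} requires either an intermediate cutoff radius or a subsequent interior estimate---another piece of bookkeeping that the cited result handles and your sketch leaves implicit.
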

\begin{proof}
By minimality, $v$ solves the Euler-Lagrange equation
\begin{flalign*}
\int_{B_{r}}\partial g(x,Dv)\cdot D\xi \ dx=0 \ \ \mathrm{for \ all \ }\xi \in W^{1,\varphi}_{0}(B_{r},\mathbb{R}^{N}),
\end{flalign*}
so, recalling \eqref{assg}, the assumptions of \cite[Theorem 11]{DE} are matched and \eqref{estv} follows.
\end{proof}
The next lemma links assumptions \eqref{assf2} and $\eqref{assf}$. In some sense, it is the Orlicz version of \cite[Lemma 2.12]{acfu}.
\begin{lemma}\label{aff}Let $f$ satisfy assumptions \eqref{assf2}-$\eqref{assf}_{1,3,4,5,8}$. Then the map $\tilde{f}$ introduced in \eqref{assf2} satisfies
\begin{flalign}\label{af}
\begin{cases}\ \nu \varphi(t)\le \tilde{f}(x,v,t)\le L\varphi(t)\\
\ \tilde{f}'(x,v,t)\sim \varphi'(t)\\
\ \tilde{f}''(x,v,t)\sim \varphi''(t)\\
\ \snr{\tilde{f}''(x,v,t+s)-\tilde{f}''(x,v,t)}\le c\left(\frac{\snr{s}}{t}\right)^{\vartheta}\varphi''(t) \ \ \mathrm{for \ all} \ \ \snr{s}<\frac{1}{2}t
\end{cases}
\end{flalign}
for all fixed $(x,v)\in \Omega\times \mathbb{R}^{N}$. The constants implicit in "$\sim$" depend on $\nu,L,\Delta_{2}(\varphi),\Delta_{2}(\varphi,\varphi^{*}),c_{\varphi}$ and $c=c(n,N,L,\Delta_{2}(\varphi),\Delta_{2}(\varphi,\varphi^{*}),c_{\varphi})$. Moreover, there holds
\begin{flalign}\label{aff11}
\snr{V_{\varphi}(z_{1})-V_{\varphi}(z_{2})}^{2}\sim \snr{\tilde{V}_{x,v}(z_{1})-\tilde{V}_{x,v}(z_{2})}^{2},
\end{flalign}
with constants influenced by $n,N,\nu,L,c_{\varphi}$.
\end{lemma}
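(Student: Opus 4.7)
The plan is to exploit the radial structure $f(x,v,z)=\tilde{f}(x,v,|z|)$ in order to express the gradient and Hessian of $f$ in $z$ explicitly, and then to specialize the hypotheses \eqref{assf} to suitable scalar and matrix test directions so as to read off the required bounds on $\tilde{f}$, $\tilde{f}'$, $\tilde{f}''$. A direct computation gives, for $t=|z|>0$ and $P:=z\otimes z/|z|^{2}$,
\begin{flalign*}
\partial_{z} f(x,v,z) = \tilde{f}'(x,v,t)\,\frac{z}{|z|},\qquad \partial^{2}_{z} f(x,v,z) = \tilde{f}''(x,v,t)\,P + \frac{\tilde{f}'(x,v,t)}{t}\,(I-P).
\end{flalign*}
The first line of \eqref{af} is just $\eqref{assf}_{3}$ specialized at $z$ with $|z|=t$.

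For the upper bounds on $\tilde{f}'$ and $\tilde{f}''$, I would insert $z=te_{1}$ in $\eqref{assf}_{4}$ and evaluate the quadratic form on $\xi=e_{1}$; this yields $\tilde{f}'(x,v,t)t\le L\varphi(t)$ and $\tilde{f}''(x,v,t)t^{2}\le L\varphi(t)$. Applying the equivalences $\varphi(t)\sim t\varphi'(t)$ from \eqref{af2} and $\varphi'(t)\sim t\varphi''(t)$ from \eqref{phi6} rewrites these as $\tilde{f}'(x,v,t)\lesssim \varphi'(t)$ and $\tilde{f}''(x,v,t)\lesssim \varphi''(t)$. For the matching lower bounds I use the ellipticity $\eqref{assf}_{5}$: testing it against $\xi=z$ isolates the $P$-component and gives $\tilde{f}''(x,v,t)\ge \nu\varphi''(t)$; testing it against any $\xi\perp z$ with $|\xi|=1$ isolates the $(I-P)$-component and gives $\tilde{f}'(x,v,t)/t\ge \nu\varphi''(t)$, which becomes $\tilde{f}'(x,v,t)\gtrsim \varphi'(t)$ after one more application of \eqref{phi6}. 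The fourth relation in \eqref{af} is obtained similarly from $\eqref{assf}_{8}$ by choosing $z_{1}=te_{1}$, $z_{2}=se_{1}$ with $|s|<t/2$ and reading the quadratic form on $\xi=e_{1}$.

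Finally, for \eqref{aff11} I would note that the bounds just proved imply that $\tilde{f}_{x,v}$ in \eqref{aff9} is an $N$-function whose characteristics, including $\Delta_{2}(\tilde{f}_{x,v})$ and $\Delta_{2}(\tilde{f}_{x,v},\tilde{f}_{x,v}^{*})$, are controlled by those of $\varphi$ uniformly in $(x,v)$. Therefore Lemma \ref{aff21} applies both to $\varphi$ and to $\tilde{f}_{x,v}$, delivering
\begin{flalign*}
|V_{\varphi}(z_{1})-V_{\varphi}(z_{2})|^{2} \sim \varphi''(|z_{1}|+|z_{2}|)\,|z_{1}-z_{2}|^{2}, \quad |\tilde{V}_{x,v}(z_{1})-\tilde{V}_{x,v}(z_{2})|^{2} \sim \tilde{f}''(x,v,|z_{1}|+|z_{2}|)\,|z_{1}-z_{2}|^{2},
\end{flalign*}
and the comparability $\tilde{f}''\sim\varphi''$ from $(\mathrm{af})_{3}$ closes the argument.

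The only delicate point I anticipate is bookkeeping the dependencies of the constants: one has to make sure that when transferring the $\Delta_{2}$-conditions from $\varphi$ to $\tilde{f}_{x,v}$ the constants depend only on $\nu,L,\Delta_{2}(\varphi),\Delta_{2}(\varphi,\varphi^{*}),c_{\varphi}$ and not on $(x,v)$, and that the equivalences invoked from \eqref{af2}, \eqref{phi6} do not silently absorb a $(x,v)$-dependence. Everything else is a direct manipulation of the hypotheses using the explicit form of $\partial_{z}f$ and $\partial^{2}_{z}f$.
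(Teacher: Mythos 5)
Your proposal is correct, and for three of the four bounds in \eqref{af} it follows essentially the same route as the paper: derive the explicit formula $\partial^{2}f=\tilde{f}''P+\tfrac{\tilde{f}'}{t}(I-P)$ with $P=z\otimes z/|z|^{2}$, read off $\eqref{af}_{1}$ from $\eqref{assf}_{3}$, get the upper bounds on $\tilde f'$, $\tilde f''$ from $\eqref{assf}_{4}$ (the paper uses a Pythagorean decomposition of $|\partial^{2}f|^{2}$ where you test on $\xi=z/|z|$; either way one bounds the radial component by the full norm), and obtain the lower bounds by testing the ellipticity $\eqref{assf}_{5}$ on $\xi=z$ and on $\xi\perp z$, exactly as in the paper. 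The treatment of \eqref{aff11} is also the same: apply Lemma~\ref{aff21} to both $\varphi$ and $\tilde f_{x,v}$ and use $\tilde f''_{x,v}\sim\varphi''$ to pass between the two $V$-functionals, with the caveat you correctly flag that $\tilde f_{x,v}$ must be an $N$-function with characteristics controlled uniformly in $(x,v)$ (this is exactly what Corollary~\ref{aff1} records).

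Where you genuinely diverge from the paper is in $\eqref{af}_{4}$, and your route is cleaner. Since $z_{1}=te_{1}$ and $z_{1}+z_{2}=(t+s)e_{1}$ are parallel, they share the \emph{same} projector $P=e_{1}\otimes e_{1}$, so $(I-P)e_{1}=0$ and the tangential parts of $\partial^{2}f$ cancel identically when you test the Hessian difference on $\xi=e_{1}$:
\begin{flalign*}
\snr{\tilde f''_{x,v}(t+s)-\tilde f''_{x,v}(t)}=\snr{\langle\bigl(\partial^{2}f(x,v,z_{1}+z_{2})-\partial^{2}f(x,v,z_{1})\bigr)e_{1},e_{1}\rangle}\le \snr{\partial^{2}f(x,v,z_{1}+z_{2})-\partial^{2}f(x,v,z_{1})}\le L\varphi''(t)\left(\frac{\snr{s}}{t}\right)^{\vartheta}.
\end{flalign*}
The paper instead lower-bounds the Frobenius norm of the Hessian difference by $\snr{\tilde f''(t+s)-\tilde f''(t)}$ minus a cross-term $(Nn-1)\snr{\tfrac{\tilde f'(t+s)}{t+s}-\tfrac{\tilde f'(t)}{t}}$, which then has to be estimated separately in display \eqref{af8} via a derivative-under-the-integral computation and Lemma~\ref{aff20}. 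Your projection argument makes that whole detour unnecessary, and in fact yields the constant $L$ rather than a generic $c$. One small presentational nit: $\eqref{assf}_{4}$ is a bound on the norm of $\partial^{2}f$ rather than a quadratic form, so the phrase ``evaluate the quadratic form on $\xi=e_{1}$'' there should really be read as ``note $\tilde f''(t)=\langle \partial^{2}f\,e_{1},e_{1}\rangle\le\snr{\partial^{2}f}$''; once phrased that way the step is airtight.
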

\begin{proof}
For simplicity, we shall adopt the terminology in \eqref{aff9}. By $\eqref{assf}_{3}$ it immediately follows $\eqref{af}_{1}$. Concerning $\eqref{af}_{2}$, from $\eqref{af}_{1}$ and $\eqref{af2}_{1}$, we easily see that
\begin{flalign}\label{af4}
\snr{\partial f(x,v,z)}=&\left |\ \tilde{f}_{x,v}'(\snr{z})\frac{z}{\snr{z}} \  \right |=\tilde{f}_{x,v}'(\snr{z})\le L\frac{\varphi(\snr{z})}{\snr{z}}\le c\varphi^{'}(\snr{z}),
\end{flalign}
with $c=c(L,\Delta_{2}(\varphi))$. Moreover, a direct computation shows that
\begin{flalign}\label{af1}
\partial^{2}f(x,v,z)=\tilde{f}_{x,v}''(\snr{z})\frac{z\otimes z}{\snr{z}^{2}}+\frac{\tilde{f}_{x,v}'(\snr{z})}{\snr{z}}\left(\mbox{Id}-\frac{z\otimes z}{\snr{z}^{2}}\right).
\end{flalign}
Inserting \eqref{af1} in $\eqref{assf}_{5}$ and choosing $\xi\in \mathbb{R}^{N\times n}\setminus \{0\}$ so that $\xi \cdot z=0$, we obtain
\begin{flalign*}
\sum_{\alpha,\beta=1}^{n}\sum_{i,j=1}^{N}&\left[\tilde{f}_{x,v}''(\snr{z})\frac{(z^{i}_{\alpha}\xi^{i}_{\alpha})(z_{\beta}^{j}\xi^{j}_{\beta})}{\snr{z}^{2}}+\frac{\tilde{f}'_{x,v}(\snr{z})}{\snr{z}}\left(\delta_{\alpha}^{i}\delta_{\beta}^{j}\xi_{\alpha}^{i}\xi_{\beta}^{j}-\frac{(z^{i}_{\alpha}\xi_{\alpha}^{i})(z_{\beta}^{j}\xi_{\beta}^{j})}{\snr{z}^{2}}\right)\right]\nonumber\\
=&\frac{\tilde{f}_{x,v}'(\snr{z})}{\snr{z}}\snr{\xi}^{2}\ge \nu \varphi''(\snr{z})\snr{\xi}^{2},
\end{flalign*}
and, recalling also \eqref{phi6}, we have
\begin{flalign}\label{af3}
\tilde{f}'_{x,v}\snr{z}\ge \nu \varphi''(\snr{z})\snr{z}\ge c\varphi'(\snr{z}),
\end{flalign}
for $c=c(\nu,c_{\varphi})$. Coupling \eqref{af4} and \eqref{af3} we get $\eqref{af}_{2}$. Now, set in $\eqref{assf}_{5}$ $\xi= z\in \mathbb{R}^{N\times n}\setminus \{0\}$. It follows that
\begin{flalign}\label{af5}
\sum_{\alpha,\beta=1}^{n}\sum_{i,j=1}^{N}&\left[\tilde{f}_{x,v}''(\snr{z})\frac{(z_{\alpha}^{i})^{2}(z_{\beta}^{j})^{2}}{\snr{z}^{2}}+\frac{\tilde{f}'_{x,v}(\snr{z})}{\snr{z}}\left(\delta_{\alpha}^{i}\delta_{\beta}^{j}z_{\alpha}^{i}z_{\beta}^{j}-\frac{(z^{i}_{\alpha})^{2}(z_{\beta}^{j})^{2}}{\snr{z}^{2}}\right)\right]\nonumber \\
=&\tilde{f}''_{x,v}(\snr{z})\snr{z}^{2}\ge \nu \varphi''(\snr{z})\snr{z}^{2}.
\end{flalign}
Now, notice that 
\begin{flalign*}
\snr{z}^{-2}(z\otimes z)\cdot \left(\mathrm{Id}-(z\otimes z)\snr{z}^{-2}\right)=0,
\end{flalign*}
therefore, by \eqref{af1} and Phytagoras' Theorem we obtain that
\begin{flalign}\label{af6}
\snr{\partial^{2}f(x,y,z)}^{2}=\snr{\tilde{f}''_{x,v}(\snr{z})}^{2}+\snr{\tilde{f}'_{x,v}(\snr{z})}^{2}(Nn-1)\snr{z}^{-2},
\end{flalign}
therefore, recalling $\eqref{assf}_{4}$ we have
\begin{flalign*}
\snr{\tilde{f}''_{x,v}(\snr{z})}\le L\varphi(\snr{z})\snr{z}^{-2}\le c\varphi''(\snr{z}),
\end{flalign*}
where $c=c(L,c_{\varphi}, \Delta_{2}(\varphi))$. The content of the previous display and \eqref{af5} render $\eqref{af}_{3}$. In \eqref{af6} we also used that $\left | \ \mathrm{Id}-\frac{z\otimes z}{\snr{z}^{2}} \ \right |^{2}=(Nn-1)$. Now, let us take any $z_{1} \in \mathbb{R}^{N\times n}\setminus \{0\}$ and set $t:=\snr{z_{1}}>0$. Let $z_{2} \in \mathbb{R}^{N\times n}$ be any vector parallel to $z_{1}$ and such that $\snr{z_{2}}<\frac{1}{2}\snr{z_{1}}$. Since $z_{2} \parallel z_{1}$ and $\snr{z_{2}}<\frac{1}{2}\snr{z_{1}}$, then there exists $s\in \left(-\frac{t}{2},\frac{t}{2}\right)$ such that $z_{2}=\frac{s}{t}z_{1}$ and $\snr{z_{2}}=\snr{s}$, by the very definition of $t$. Notice that $1+\frac{s}{t}>\frac{1}{2}$, given that $\snr{s}<\frac{1}{2}t$. With this choice of $z_{1}$ and $z_{2}$, keeping in mind \eqref{af1}, $\eqref{assf}_{8}$ reads as
\begin{flalign}\label{af9}
\left | \ \partial^{2}f(x,v,z_{1}+z_{2})\right.&\left.-\partial^{2}f(x,v,z_{1}) \ \right |=\left | \ \tilde{f}''_{x,v}(t+s)\frac{z_{1}\otimes z_{1}}{t^{2}}+ \frac{\tilde{f}_{x,v}'(t+s)}{t+s}\left(\mathrm{Id}-\frac{z_{1}\otimes z_{1}}{t^{2}}\right) \right.\nonumber \\
&\left.\ - \tilde{f}''_{x,v}(t)\frac{z_{1}\otimes z_{1}}{t^{2}}- \frac{\tilde{f}_{x,v}'(t)}{t}\left(\mathrm{Id}-\frac{z_{1}\otimes z_{1}}{t^{2}}\right)\ \right |\nonumber \\
\ge &\snr{\tilde{f}''_{x,v}(t+s)-\tilde{f}_{x,v}''(t)}-(Nn-1)\left | \ \frac{\tilde{f}_{x,v}'(t+s)}{t+s}-\frac{\tilde{f}_{x,v}'(t)}{t}  \ \right |.
\end{flalign}
For $t_{1},t_{2}\in \mathbb{R}$ and $\lambda \in [0,1]$, set $[t_{1},t_{2}]_{\lambda}:=\lambda t_{1}+(1-\lambda)t_{2}$. We point out that, being $\snr{s}<\frac{1}{2}t$, then
\begin{flalign}\label{af7}
[t+s,t]_{\lambda}>\frac{1}{2}t>0.
\end{flalign}
Using \eqref{phi6}, \eqref{af4}, \eqref{af7}, Lemma \ref{aff20}, $\snr{s}<\frac{1}{2}t$, $\eqref{phi1}_{3}$ and the $\Delta_{2}$-condition satisfied by $\varphi$, we estimate
\begin{flalign}\label{af8}
\left | \ \frac{\tilde{f}_{x,v}'(t+s)}{t+s}\right.&\left.-\frac{\tilde{f}_{x,v}'(t)}{t}  \ \right |=\left | \ \int_{0}^{1}\frac{d}{d\lambda}\left(\frac{\tilde{f}'_{x,v}(\snr{[t+s,t]_{\lambda}})}{[t+s,t]_{\lambda}}\right) \ d\lambda \ \right |\nonumber \\
=&\snr{s}\left | \ \int_{0}^{1}\left(\frac{\tilde{f}_{x,v}''([t+s,t]_{\lambda})}{[t+s,t]_{\lambda}}-\frac{\tilde{f}_{x,v}'([t+s,t]_{\lambda})}{[t+s,t]_{\lambda}^{2}}\right) \ d\lambda  \ \right |\nonumber \\
\le& c\snr{s}\int_{0}^{1}\frac{\snr{\tilde{f}'_{x,v}([t+s,t]_{\lambda})}}{[t+s,t]_{\lambda}^{2}} \ d\lambda\le c\frac{\snr{s}}{t}\int_{0}^{1}\frac{\varphi'([t+s,t]_{\lambda})}{[t+s,t]_{\lambda}} \ d\lambda\nonumber \\
\le &c\frac{\snr{s}}{t}\frac{\varphi'(2t+s)}{2t+s}\le c\frac{\snr{s}}{t}\varphi''(t),
\end{flalign}
for $c=c(L,\Delta_{2}(\varphi,\varphi^{*}),c_{\varphi})$. Merging $\eqref{assf}_{8}$, \eqref{af9}, \eqref{af8} and recalling that $\snr{s}/t<\frac{1}{2}$ we obtain
\begin{flalign*}
\left| \ \tilde{f}_{x,v}''(t+s)\right.&\left.- \tilde{f}_{x,v}''(t) \ \right |\le c\frac{\snr{s}}{t}\varphi''(t)+\left | \ \partial^{2}f(x,v,z_{1}+z_{2})\right.\left.-\partial^{2}f(x,v,z_{1}) \ \right |\nonumber \\
\le &c\frac{\snr{s}}{t}\varphi''(t)+L\varphi''(t)\left(\frac{\snr{s}}{t}\right)^{\vartheta}\le c\varphi''(t)\left(\frac{\snr{s}}{t}\right)^{\vartheta},
\end{flalign*}
with $c=c(n,N,L,\Delta_{2}(\varphi,\varphi^{*}),c_{\varphi})$, which is $\eqref{af}_{4}$. Here we also used that $\vartheta \in (0,1]$. Finally, recalling the position in \eqref{aff10}, from $\eqref{af}_{3}$ and Lemma \ref{aff21} applied to both $\tilde{f}_{x,v}$ and $\varphi$ we obtain
\begin{flalign*}
\snr{\tilde{V}_{x,v}(z_{1})-\tilde{V}_{x,v}(z_{2})}^{2}\sim&\left\langle \frac{\tilde{f}_{x,v}'(\snr{z_{1}})}{\snr{z_{1}}}z_{1}- \frac{\tilde{f}_{x,v}'(\snr{z_{2}})}{\snr{z_{2}}}z_{2} \right \rangle\sim \tilde{f}_{x,v}''(\snr{z_{1}}+\snr{z_{2}})\snr{z_{1}-z_{2}}^{2}\nonumber \\
\sim&  \varphi''(\snr{z_{1}}+\snr{z_{2}})\snr{z_{1}-z_{2}}^{2}\sim \snr{V_{\varphi}(z_{1})-V_{\varphi}(z_{2})},
\end{flalign*}
for all $z_{1},z_{2}\subset \mathbb{R}^{N\times n}$, with constants depending at the most on $n,N,\nu,L,c_{\varphi}$, thus \eqref{aff11} is proved.
\end{proof}
\begin{remark}
\emph{We stress that the constants appearing in estimates \eqref{af}-\eqref{aff11}} do not \emph{depend on the choice of $(x,v)\in \Omega\times \RN$}.
\end{remark}
Lemma \ref{aff} has a crucial consequence, as the next corollary shows.
\begin{corollary}\label{aff1}
Let $f$ satisfy \eqref{assf2}-$\eqref{assf}_{1,3,4,5,8}$ and fix $(x,v)\in \Omega\times \mathbb{R}^{N}$. Then, the map $\tilde{f}_{x,v}\colon [0,\infty)\to [0,\infty)$ defined in \eqref{aff9} is an $N$-function matching the assumptions listed in Section \ref{assec}.
\end{corollary}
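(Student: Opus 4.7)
The plan is to verify that $\tilde{f}_{x,v}$ satisfies each of the defining properties of an $N$-function listed in Subsection \ref{N}, by systematically transferring them from $\varphi$ through the comparabilities established in Lemma \ref{aff}. Since $(x,v)\in \Omega\times\RN$ is held fixed throughout and the constants appearing in \eqref{af}-\eqref{aff11} are $(x,v)$-independent (as noted in the remark), the resulting $N$-function characteristics will be uniform in $(x,v)$.

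First I would handle the basic analytic requirements. The regularity $\tilde{f}_{x,v}\in C^{1}([0,\infty))\cap C^{2,\vartheta}((0,\infty))$ is immediate from $\eqref{assf2}_{2}$. The vanishing $\tilde{f}_{x,v}(0)=0$ follows from $\eqref{af}_{1}$ combined with $\varphi(0)=0$, and $\tilde{f}_{x,v}'(0)=0$ follows from $\eqref{af}_{2}$ combined with $\varphi'(0)=0$. Monotonicity of $t\mapsto\tilde{f}_{x,v}'(t)$ is equivalent to $\tilde{f}_{x,v}''(t)\ge 0$, which holds on $(0,\infty)$ by $\eqref{af}_{3}$ together with the positivity of $\varphi''$; continuity up to the origin patches this into non-decreasingness on $[0,\infty)$. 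The limit $\lim_{t\to\infty}\tilde{f}_{x,v}'(t)=\infty$ is again a consequence of $\eqref{af}_{2}$ and $\lim_{t\to\infty}\varphi'(t)=\infty$. Convexity follows from the non-negativity of $\tilde{f}_{x,v}''$ on $(0,\infty)$.

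Next I would verify the structural hypotheses \eqref{phi1}-\eqref{aff5}. The $\Delta_{2}$-condition for $\tilde{f}_{x,v}$ (with constant depending only on $\nu,L,\Delta_{2}(\varphi)$) is immediate from $\eqref{af}_{1}$:
\[
\tilde{f}_{x,v}(2t)\le L\varphi(2t)\le L\,\Delta_{2}(\varphi)\,\varphi(t)\le \frac{L\,\Delta_{2}(\varphi)}{\nu}\,\tilde{f}_{x,v}(t).
\]
For the complementary $N$-function, the same reasoning combined with duality (using $\rho\le\tilde\rho \Rightarrow \tilde\rho^{*}\le\rho^{*}$) yields $\Delta_{2}(\tilde{f}_{x,v}^{*})<\infty$ controlled by the characteristics of $\varphi$. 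The relation $\tilde{f}_{x,v}'(t)\sim t\tilde{f}_{x,v}''(t)$ required by \eqref{phi6} follows by chaining $\eqref{af}_{2}$, the analogous property \eqref{phi6} for $\varphi$ itself, and $\eqref{af}_{3}$. Finally, the Hölder bound \eqref{aff5} for $\tilde{f}_{x,v}''$ is exactly $\eqref{af}_{4}$.

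There is no genuine obstacle here; the statement is essentially a bookkeeping consequence of Lemma \ref{aff}. The only point that requires mild care is making sure the comparison constants in $\eqref{af}_{2,3}$ and \eqref{phi6}, which are multiplicative, combine correctly to give a genuine $\sim$-relation $\tilde{f}_{x,v}'(t)\sim t\tilde{f}_{x,v}''(t)$ with constants independent of $(x,v)$; this is transparent once one writes the inequality chain explicitly. Once all items in Subsection \ref{N} are checked, the Luxemburg-norm framework of Definition \ref{d1} applies verbatim with $\varphi$ replaced by $\tilde{f}_{x,v}$, completing the verification.
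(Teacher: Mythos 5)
Your proposal is correct and follows essentially the same route as the paper: transfer each item in Subsection \ref{N} from $\varphi$ to $\tilde{f}_{x,v}$ via the comparability relations $\eqref{af}_{1}$--$\eqref{af}_{4}$, verifying \eqref{phi1}, the $\Delta_{2}$-condition, \eqref{phi6}, and \eqref{aff5} in turn, with all constants $(x,v)$-independent. The only point where you diverge from the paper is the treatment of the conjugate function: the paper verifies $\eqref{af2}$ for $\tilde{f}_{x,v}$ directly, computing $\tilde{f}_{x,v}^{*}(\tilde{f}_{x,v}'(t))\sim\tilde{f}_{x,v}(t)$ by an explicit change of variables, whereas you invoke the order-reversing property of Legendre conjugation on the sandwich $\nu\varphi\le\tilde{f}_{x,v}\le L\varphi$ to get $\Delta_{2}(\tilde{f}_{x,v}^{*})<\infty$; your route works but needs the extra (easy) observation that $(L\varphi)^{*}(t)=L\varphi^{*}(t/L)$ and a few iterations of $\Delta_{2}(\varphi^{*})$ to absorb the resulting rescaling by $\nu$ and $L$.
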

\begin{proof}
Recall that $\varphi$ satisfies all the assumptions listed in Section \ref{assec}. From \eqref{assf2} and $\eqref{af}_{1,2}$ we directly earn $\eqref{phi1}_{1,2}$. Moreover, $\eqref{af}_{3}$ assures that $\tilde{f}_{x,v}''(t)\ge c\varphi''(t)>0$ for $t>0$ and $c=c(\nu,L,\Delta_{2}(\varphi),\Delta_{2}(\varphi,\varphi^{*}),c_{\varphi})$, so, recalling also that $\tilde{f}_{x,v}'(0)=0$, we can conclude that $t\mapsto \tilde{f}_{x,v}'(t)$ is increasing. By $\eqref{af}_{2}$ we also see that $\tilde{f}_{x,v}'(t)\ge c\varphi'(t)$ with $c=c(\nu,L,\Delta_{2}(\varphi),\Delta_{2}(\varphi,\varphi^{*}),c_{\varphi})$, thus, recalling $\eqref{phi1}_{4}$, $\lim_{t\to \infty}\tilde{f}_{x,v}'(t)=\infty$, therefore \eqref{phi1} is matched. Now we claim that $\tilde{f}_{x,v}$ enjoys the $\Delta_{2}$-condition with $\Delta_{2}(\tilde{f}_{x,v})=\frac{L}{\nu}\Delta_{2}(\varphi)$. In fact, from $\eqref{af}_{1}$ we obtain
\begin{flalign*}
\tilde{f}_{x,v}(2t)\le L\varphi(2t)\le L\Delta_{2}(\varphi)\varphi(t)\le \frac{L}{\nu}\Delta_{2}(\varphi)\tilde{f}_{x,v}(t).
\end{flalign*}
Since from $\eqref{af}_{2}$ it trivially follows that $t\mapsto \tilde{f}_{x,v}(t)$ is increasing, then we just got that $\tilde{f}_{x,v}(t)\sim \tilde{f}_{x,v}(2t)$. Furthermore, from $\eqref{af}_{2,3}$ we get
\begin{flalign}\label{aff4}
\tilde{f}_{x,v}''(t)\sim \varphi''(t)\sim t\varphi'(t)\sim t\tilde{f}_{x,v}'(t),
\end{flalign}
with constants depending on $\nu,L,\Delta_{2}(\varphi),\Delta_{2}(\varphi,\varphi^{*}),c_{\varphi}$. The inequalities in \eqref{af2} can be obtained exploiting $\eqref{af}_{1,2}$ and the definition of $\tilde{f}_{x,v}^{*}$. In fact,
\begin{flalign*}
t\tilde{f}_{x,v}'(t)\sim t\varphi'(t)\sim \varphi(t)\sim \tilde{f}_{x,v}(t),
\end{flalign*}
for all $t\ge 0$, with constants depending on $\nu,L,\Delta_{2}(\varphi),\Delta_{2}(\varphi,\varphi^{*}),c_{\varphi}$ and 
\begin{flalign*}
\tilde{f}^{*}_{x,v}(\tilde{f}_{x,v}'(t))=&\int_{0}^{\tilde{f}_{x,v}'(t)}(\tilde{f}_{x,v}')^{-1}(s) \ ds=\int_{0}^{(\tilde{f}_{x,v}')^{-1}(\tilde{f}_{x,v}'(t))}(\tilde{f}_{x,v}')^{-1}(\tilde{f}'_{x,v}(s))\tilde{f}_{x,v}''(s) \ ds\nonumber \\
=&\int_{0}^{t}s\tilde{f}_{x,v}''(s) \ ds\stackrel{\eqref{aff4}}{\sim} \int_{0}^{t}\tilde{f}_{x,v}'(s) \ dx =\tilde{f}_{x,v}(t). 
\end{flalign*}
Here, the constants implicit in "$\sim$" depend on $\nu,L,\Delta_{2}(\varphi),\Delta_{2}(\varphi,\varphi^{*}),c_{\varphi}$. Finally, $\eqref{af}_{4}$ gives directly \eqref{aff5}. 
\end{proof}
Corollary \eqref{aff1} combined with the theory exposed in \cite{DVS} renders the following, important reference estimates for frozen functionals concerning the gradient of solutions and the decay of the associated excess functional.
\begin{proposition}\label{aff30}
Under assumptions \eqref{assf}-$\eqref{assf}_{1,3,4,5,8}$, fix any couple $(x,v)\in \Omega\times \RN$ and let $\tilde{f}_{x,v}\colon [0,\infty)\to [0,\infty)$ be the map defined in \eqref{aff9} and $B_{r}\Subset \Omega$ be any ball. Then, if $v\in W^{1,\varphi}(B_{r},\RN)$ solves the Dirichlet problem
\begin{flalign*}
u+W^{1,\varphi}_{0}(B_{r},\RN)\ni w\mapsto \int_{B_{r}}\tilde{f}_{x,v}(\snr{Dw}) \ dx,
\end{flalign*}
for some $u \in W^{1,\varphi}(\Omega,\RN)$, then for every ball $B_{t}\subset B_{s}\Subset B_{r}$ there holds
\begin{flalign}\label{aff6}
\sup_{B_{t}}\varphi(\snr{Dv})\le c\mint_{B_{s}}\varphi(\snr{Dv}) \ dx,
\end{flalign}
with $c=c(n,N,\nu,L,\Delta_{2}(\varphi),\Delta_{2}(\varphi,\varphi^{*}),c_{\varphi})$. Moreover, for any $B_{t}\subset B_{s}\Subset B_{r}$ there holds
\begin{flalign}\label{aff7}
\mint_{B_{t}}\snr{V_{\varphi}(Dv)-(V_{\varphi}(Dv))_{B_{t}}}^{2} \ dx \le c(t/s)^{\tilde{\nu}}\mint_{B_{s}}\varphi(\snr{Dv}) \ dx,
\end{flalign}
for $\tilde{\nu}=\tilde{\nu}(n,N,\nu,L,\Delta_{2}(\varphi),\Delta_{2}(\varphi,\varphi^{*}),c_{\varphi})$ and $c=c(n,N,\nu,L,\Delta_{2}(\varphi),\Delta_{2}(\varphi,\varphi^{*}),c_{\varphi})$.
\end{proposition}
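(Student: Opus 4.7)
The plan is to observe that, for the fixed couple $(x,v)\in \Omega\times \RN$, the frozen functional $w\mapsto\int_{B_{r}}\tilde{f}_{x,v}(\snr{Dw})\,dx$ is an autonomous $\varphi$-type integral whose integrand is a bona fide $N$-function matching every assumption of Section \ref{assec}; this is exactly the content of Corollary \ref{aff1}. Consequently, the results of \cite{DVS} for minimizers of autonomous $N$-function functionals apply verbatim to $v$, and all that remains is the transfer of the estimates from the data of $\tilde{f}_{x,v}$ and $\tilde{V}_{x,v}$ to the reference data $\varphi$ and $V_{\varphi}$, using $\eqref{af}_{1}$ and \eqref{aff11} respectively.

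Concretely, I would first apply the $L^{\infty}$-gradient estimate from \cite{DVS} for minimizers of autonomous $N$-function functionals in the form
\begin{flalign*}
\sup_{B_{t}}\tilde{f}_{x,v}(\snr{Dv})\le c\mint_{B_{s}}\tilde{f}_{x,v}(\snr{Dv}) \ dx,
\end{flalign*}
valid for every $B_{t}\subset B_{s}\Subset B_{r}$, with $c$ depending only on the structural characteristics of $\tilde{f}_{x,v}$. By the two-sided comparison $\tilde{f}_{x,v}(t)\sim \varphi(t)$ granted by $\eqref{af}_{1}$, both sides can be replaced by $\varphi(\snr{Dv})$ up to a constant depending only on $\nu, L, \Delta_{2}(\varphi), \Delta_{2}(\varphi,\varphi^{*}), c_{\varphi}$, which yields \eqref{aff6}.

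Next, I would invoke the $V$-excess decay estimate for minimizers of autonomous $N$-function functionals, again from \cite{DVS}, in the form
\begin{flalign*}
\mint_{B_{t}}\snr{\tilde{V}_{x,v}(Dv)-(\tilde{V}_{x,v}(Dv))_{B_{t}}}^{2} \ dx \le c(t/s)^{\tilde{\nu}}\mint_{B_{s}}\tilde{f}_{x,v}(\snr{Dv}) \ dx.
\end{flalign*}
To pass from $\tilde{V}_{x,v}$ to $V_{\varphi}$ on the left-hand side I would exploit the fact that $(V_{\varphi}(Dv))_{B_{t}}$ realizes $\min_{\xi}\mint_{B_{t}}\snr{V_{\varphi}(Dv)-\xi}^{2} \ dx$: picking the competitor $\xi:=V_{\varphi}(\zeta_{0})$, where $\zeta_{0}:=\tilde{V}_{x,v}^{-1}((\tilde{V}_{x,v}(Dv))_{B_{t}})$ (well defined since $\tilde{V}_{x,v}$ is a bijection of $\mathbb{R}^{N\times n}$ by Corollary \ref{aff1}), and using \eqref{aff11} pointwise gives
\begin{flalign*}
\mint_{B_{t}}\snr{V_{\varphi}(Dv)-(V_{\varphi}(Dv))_{B_{t}}}^{2} \ dx \le c\mint_{B_{t}}\snr{\tilde{V}_{x,v}(Dv)-(\tilde{V}_{x,v}(Dv))_{B_{t}}}^{2} \ dx.
\end{flalign*}
Combining the last two displays with the equivalence $\tilde{f}_{x,v}(\snr{Dv})\sim \varphi(\snr{Dv})$ on the right-hand side produces \eqref{aff7}.

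The only delicate point is to make sure the constants and the decay exponent $\tilde{\nu}$ do not depend on the frozen couple $(x,v)$; however, this is guaranteed by the remark following Lemma \ref{aff}, since every equivalence $\tilde{f}_{x,v}\sim\varphi$, $\tilde{f}_{x,v}''\sim\varphi''$ and $\tilde{V}_{x,v}\sim V_{\varphi}$ carries constants depending only on $n, N, \nu, L, \Delta_{2}(\varphi), \Delta_{2}(\varphi,\varphi^{*}), c_{\varphi}$. Hence $\tilde{\nu}$, produced by \cite{DVS} out of the structural characteristics of $\tilde{f}_{x,v}$ (which are uniformly controlled by those of $\varphi$), has the claimed uniform dependencies, concluding the proof.
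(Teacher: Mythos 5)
Your proof is correct and follows the same route as the paper: verify via Corollary \ref{aff1} that $\tilde{f}_{x,v}$ is an $N$-function with all the structural properties of Section \ref{assec}, then invoke the autonomous theory of \cite[Theorem 1.1]{DVS}. The paper compresses the transfer from the $\tilde{f}_{x,v}$-/$\tilde{V}_{x,v}$-formulated estimates to the $\varphi$-/$V_{\varphi}$-formulated ones into ``readily follow''; you make it explicit, in particular using the $L^{2}$-minimality of the integral average together with the pointwise equivalence \eqref{aff11} to change the reference vector field inside the excess, which is exactly the right observation and the only slightly non-trivial step. The uniformity of the constants in $(x,v)$ is also correctly traced back to the $(x,v)$-independent equivalences in Lemma \ref{aff} and the remark following it.
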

\begin{proof}
From Corollary \ref{aff1} and \eqref{assf2}-\eqref{assf}, we see that $\tilde{f}_{x,v}$ satisfies all the assumptions required by \cite[Theorem 1.1]{DVS}, thus \eqref{aff6}-\eqref{aff7} readily follow.
\end{proof}
\subsection{On fractional Sobolev spaces} The following lemma is the key to show that a certain function belongs to the fractional Sobolev space $W^{\sigma,p}_{\mathrm{loc}}(\Omega,\RN)$.
\begin{lemma}\label{frem}\cite{KM1}
Let $\tilde{\Omega}\Subset \Omega$ be an open subset and $G\in L^{p}(\Omega,\RN)$ is any vector field such that
\begin{flalign*}
\int_{\tilde{\Omega}}\snr{\tau_{h}G(x)}^{p} \ dx \le M^{p}\snr{h}^{p\tilde{\sigma}},
\end{flalign*}
for every $h \in \mathbb{R}^{n}\setminus \{0\}$ so that $0<\snr{h}\le \min\left\{1,\dist(\tilde{\Omega},\partial \Omega)\right\}$, with $M\ge 0$ and $\tilde{\sigma}\in (0,1)$. Then $G\in W^{\sigma,p}_{\mathrm{loc}}(\tilde{\Omega},\mathbb{R}^{N\times n})$ for all $\sigma \in (0,\tilde{\sigma})$. Moreover, for each open set $\Omega_{0}\Subset \tilde{\Omega}$, there exists a constant $c=c(\sigma,p,\dist(\Omega_{0},\partial \tilde{\Omega}))$ independent of $M$ and $G$ such that
\begin{flalign*}
\nr{G}_{W^{\sigma,p}(\Omega_{0},\mathbb{R}^{N\times n})}\le c\left(M+\nr{G}_{L^{p}(\Omega,\mathbb{R}^{N\times n})}\right).
\end{flalign*}
\end{lemma}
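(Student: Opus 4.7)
The plan is to estimate directly the Gagliardo double integral defining the fractional seminorm
$$[G]_{W^{\sigma,p}(\Omega_{0})}^{p} = \int_{\Omega_{0}}\int_{\Omega_{0}}\frac{\snr{G(x)-G(y)}^{p}}{\snr{x-y}^{n+\sigma p}}\,dx\,dy,$$
after the change of variables $h := y-x$. Setting $d := \min\{1,\dist(\tilde{\Omega},\partial\Omega)\}>0$, which is exactly the radius on which the difference-quotient hypothesis is available, I split the domain into the two regimes $\{\snr{h}\le d\}$ and $\{\snr{h}> d\}$.

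In the small-$h$ regime, Fubini--Tonelli combined with the inclusion $\Omega_{0}\subset\tilde{\Omega}$ and the hypothesis yield
$$\int_{\{\snr{h}\le d\}}\snr{h}^{-(n+\sigma p)}\int_{\{x\in\Omega_{0}\,:\,x+h\in\Omega_{0}\}}\snr{\tau_{h} G(x)}^{p}\,dx\,dh \le M^{p}\int_{\{\snr{h}\le d\}}\snr{h}^{p(\tilde{\sigma}-\sigma)-n}\,dh.$$
Since $\sigma<\tilde{\sigma}$, passing to polar coordinates reduces the right-hand side to $c(n)\,d^{p(\tilde{\sigma}-\sigma)}/(p(\tilde{\sigma}-\sigma))$, which is finite and, using $d\le 1$, bounded by a constant $c = c(n,p,\sigma,\tilde{\sigma})$. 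This contributes a multiple of $M^{p}$ to $[G]_{W^{\sigma,p}(\Omega_{0})}^{p}$.

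In the large-$h$ regime, the crude inequality $\snr{G(x)-G(y)}^{p}\le 2^{p-1}(\snr{G(x)}^{p}+\snr{G(y)}^{p})$, together with Fubini--Tonelli and symmetry, gives
$$\int_{\Omega_{0}}\snr{G(x)}^{p}\int_{\{y\in\Omega_{0}\,:\,\snr{y-x}>d\}}\snr{y-x}^{-(n+\sigma p)}\,dy\,dx \le c(n,\sigma,p)\,d^{-\sigma p}\nr{G}_{L^{p}(\Omega)}^{p},$$
where I use the elementary bound $\int_{\{\snr{z}>d\}}\snr{z}^{-(n+\sigma p)}\,dz \le c(n,\sigma,p)\,d^{-\sigma p}$ together with the inclusion $\Omega_{0}\subset\Omega$. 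Summing both contributions and adding the trivial inequality $\nr{G}_{L^{p}(\Omega_{0})}\le\nr{G}_{L^{p}(\Omega)}$ on the left-hand side delivers the target bound $\nr{G}_{W^{\sigma,p}(\Omega_{0})}\le c(M+\nr{G}_{L^{p}(\Omega)})$, with the constant depending on $n,\tilde{\sigma},p,\sigma$ and on the distance parameter $d$ (equivalently, on $\dist(\Omega_{0},\partial\tilde{\Omega})$ since both distances are bounded below by geometric constants determined by the prescribed inclusions).

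The only delicate point is the integrability at the origin of the kernel $\snr{h}^{p(\tilde{\sigma}-\sigma)-n}$, which crucially forces the \emph{strict} inequality $\sigma<\tilde{\sigma}$; at infinity, the kernel $\snr{h}^{-(n+\sigma p)}$ is always integrable for any $\sigma p>0$, so the large-$h$ regime is automatic. Everything else amounts to Fubini--Tonelli and bookkeeping of the dependencies of the constants.
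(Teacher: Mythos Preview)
The paper does not give its own proof of this lemma; it simply cites \cite{KM1}. Your argument is the standard one and is correct: splitting the Gagliardo double integral into the regimes $\snr{h}\le d$ and $\snr{h}>d$, using the finite-difference hypothesis in the first regime and the trivial $L^{p}$ bound in the second, is exactly how this result is usually derived.

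One small remark on your final parenthetical: the constant your argument actually produces depends on $d=\min\{1,\dist(\tilde{\Omega},\partial\Omega)\}$ through the factor $d^{-\sigma p}$, and on $\tilde{\sigma}-\sigma$ through the factor $1/(p(\tilde{\sigma}-\sigma))$. This is consistent with Remark~\ref{R1} in the paper, which records that the constant blows up as $\dist(\tilde{\Omega},\partial\Omega)\to 0$ or as $\sigma\to\tilde{\sigma}$. Your claim that this dependence is ``equivalently'' a dependence on $\dist(\Omega_{0},\partial\tilde{\Omega})$ is not quite right, since the two distances are independent geometric parameters; nothing in your estimates uses $\dist(\Omega_{0},\partial\tilde{\Omega})$ at all. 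This is harmless for the conclusion (you have a valid constant), but the dependencies listed in the lemma statement are slightly loose in the paper as well, so there is nothing to fix in the mathematics.
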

\begin{remark}\label{R1}
\emph{The constant $c$ in the previous Lemma depends on $\dist(\tilde{\Omega},\partial \Omega)$ and on $\tilde{\sigma}$. It becomes unbounded when $\dist(\tilde{\Omega},\partial \Omega)\to 0$ or $\tilde{\sigma}\to 0$, see \cite{KM1}}.
\end{remark}
We conclude this section with the usual "fractional trading", i.e. the fact that a map having a fractional derivative in the right Lebesgue space results in the earning of some extra integrability for the function itself.
\begin{lemma}\label{intem}\cite{DPV}
Let $\sigma\in (0,1)$, $p\in [1,\infty)$ be such that $\sigma p<n$ and $\Omega\subset \mathbb{R}^{n}$ be an extension domain for $W^{\sigma,p}$. Then there exists a positive constant $c=c(n,N,p,b,\Omega)$ such that for any $w \in W^{b,p}(\Omega,\RN)$ we have
\begin{flalign*}
\nr{w}_{L^{q}(\Omega,\RN)}\le c\nr{w}_{W^{\sigma,p}(\Omega,\RN)} \ \ \mathrm{for \ all \ }q \in \left[p,\frac{np}{n-\sigma p}\right].
\end{flalign*}
In particular, if $\Omega$ is bounded, the previous inequality holds for all $q \in \left[1,\frac{np}{n-\sigma p}\right]$. 
\end{lemma}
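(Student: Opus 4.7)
The plan is to first establish the critical-exponent embedding $W^{\sigma,p}(\mathbb{R}^n,\mathbb{R}^N) \hookrightarrow L^{p^*}(\mathbb{R}^n,\mathbb{R}^N)$, where $p^* := np/(n-\sigma p)$, and then cover the full range $q \in [p,p^*]$ by H\"older interpolation with $L^p$. The argument reduces to three steps.

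First I would reduce to the case $\Omega = \mathbb{R}^n$. Since $\Omega$ is, by hypothesis, an extension domain for $W^{\sigma,p}$, there is a bounded linear extension operator $E\colon W^{\sigma,p}(\Omega,\mathbb{R}^N) \to W^{\sigma,p}(\mathbb{R}^n,\mathbb{R}^N)$ with $(Ew)\restriction_\Omega = w$ and operator norm controlled by a constant depending only on $n,\sigma,p,\Omega$. Once the target inequality is proved on $\mathbb{R}^n$, the conclusion on $\Omega$ follows by restriction, since $\|Ew\|_{L^q(\mathbb{R}^n)} \geq \|w\|_{L^q(\Omega)}$. By density of $C_c^{\infty}(\mathbb{R}^n,\mathbb{R}^N)$ in $W^{\sigma,p}(\mathbb{R}^n,\mathbb{R}^N)$, one may further restrict attention to smooth compactly supported $v$ and treat each component independently.

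The core step is the critical inequality
\[
\|v\|_{L^{p^*}(\mathbb{R}^n)}^{p} \leq C \iint_{\mathbb{R}^n \times \mathbb{R}^n} \frac{|v(x)-v(y)|^p}{|x-y|^{n+\sigma p}}\,dx\,dy
\qquad \forall\, v \in C_c^\infty(\mathbb{R}^n).
\]
For $p=1$ I would argue by level sets: setting $A_k := \{x\in\mathbb{R}^n : |v(x)|>2^k\}$, a dyadic decomposition of the co-area type shows that $|A_k|^{(n-\sigma)/n}$ is controlled by the portion of the Gagliardo seminorm coming from pairs $(x,y)$ with $x\in A_{k+1}$ and $y \notin A_k$, after which summation in $k$ yields the desired $L^{n/(n-\sigma)}$ bound. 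For general $p\in(1,n/\sigma)$ one has two natural paths: either iterate the level-set argument with appropriate truncations of $v$ (this is the strategy in the proof of Theorem 6.5 of DPV); or pass through a pointwise representation $|v(x)| \leq C\,I_{\sigma}(g)(x)$ for some $g$ with $\|g\|_{L^p(\mathbb{R}^n)} \lesssim [v]_{W^{\sigma,p}(\mathbb{R}^n)}$, and conclude via the Hardy--Littlewood--Sobolev inequality $\|I_\sigma g\|_{L^{p^*}} \lesssim \|g\|_{L^p}$.

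Finally, for $q$ strictly between $p$ and $p^*$, pick $\theta \in (0,1)$ with $1/q = \theta/p + (1-\theta)/p^*$ and combine the endpoint bounds via H\"older:
\[
\|v\|_{L^q(\mathbb{R}^n)} \leq \|v\|_{L^p(\mathbb{R}^n)}^{\theta}\, \|v\|_{L^{p^*}(\mathbb{R}^n)}^{1-\theta} \leq C\,\|v\|_{W^{\sigma,p}(\mathbb{R}^n)}.
\]
When $\Omega$ is bounded, the trivial inclusion $L^{p^*}(\Omega) \hookrightarrow L^q(\Omega)$ for $q\leq p^*$ extends the range all the way down to $q=1$. The main obstacle is the endpoint estimate at $q=p^*$: it is genuinely sharp and cannot be obtained by interpolation alone, so either the Riesz-potential route (which requires verifying the pointwise bound and invoking HLS) or the direct dyadic level-set argument, adapted from the Gagliardo--Nirenberg proof of the $L^1$-Sobolev inequality to the nonlocal setting, is forced on us.
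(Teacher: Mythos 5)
The paper does not prove this lemma: it is quoted verbatim with the citation \cite{DPV} and is treated as a black box throughout Sections \ref{pre} and \ref{te1}. Your reconstruction is correct and follows essentially the same route as the cited reference, so there is no genuine divergence to report. A few small comments on precision. The reduction to $\mathbb{R}^n$ via the extension operator and the final interpolation step are clean, and the observation that boundedness of $\Omega$ upgrades the range to $q\in[1,p^*]$ by H\"older against $|\Omega|$ is exactly what is needed for the phrase \emph{in particular} in the statement. For the critical endpoint $q=p^*$, the dyadic level-set route you describe is indeed the proof of Theorem~6.5 in \cite{DPV}: the key input is the isoperimetric-type lower bound
\[
\int_{E}\int_{E^{c}}\frac{dx\,dy}{|x-y|^{n+\sigma p}}\ge c\,|E|^{\frac{n-\sigma p}{n}},
\]
applied to superlevel sets $A_k=\{|v|>2^k\}$, which, after summation over $k$, controls $\|v\|_{L^{p^*}}$ by the Gagliardo seminorm without any interpolation. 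Your alternative route via a pointwise Riesz-potential bound $|v(x)|\lesssim I_\sigma(g)(x)$ with $\|g\|_{L^p}\lesssim[v]_{W^{\sigma,p}}$ plus Hardy--Littlewood--Sobolev is also standard, but as stated it is the least detailed step: the choice of $g$ and the verification of the pointwise inequality require a maximal-function argument (a Haj\l asz-type bound $|v(x)-v(y)|\le C|x-y|^{\sigma}(g(x)+g(y))$) that is not completely immediate and should be spelled out if that branch is taken. In a genuine write-up you would also want to record the constant's dependence on the extension operator norm, which is where the dependence on $\Omega$ in $c=c(n,N,p,\sigma,\Omega)$ enters.
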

Following \cite{DPV}, we will take as an extension domain for $W^{\sigma,p}$ any open set with bounded Lipschitz boundary. Since our results are local in nature, we will mostly work on balls and then fillet the resulting estimates via covering arguments.

\section{Basic regularity results}\label{basic}
We start with a fundamental tool in regularity, the celebrated Caccioppoli's inequality.
\begin{lemma}\label{L4}
Let $0< \varrho\le 1$ and $B_{2\varrho}\Subset \Omega$ be any ball. Then if $u \in W^{1,\varphi}(\Omega,\RN)$ is an $\omega$-minimizer of \eqref{fullf}, with $f$ satisfying $\eqref{assf}_{3}$, there exists $c=c(n,N,\nu,L,\Delta_{2}(\varphi))>0$ such that
\begin{flalign*}
\mint_{B_{\varrho}}\varphi(\snr{Du}) \ dx \le c\mint_{B_{2\varrho}}\varphi\left(\left |\frac{u-(u)_{B_{2\varrho}}}{\varrho}\right |\right) \ dx.
\end{flalign*}
\end{lemma}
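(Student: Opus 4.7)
The argument is the now-classical test-function/hole-filling/iteration scheme, adapted to the Orlicz setting through the $\Delta_{2}$-condition on $\varphi$. The only point at which $\omega$-minimality enters is a harmless multiplicative factor $(1+\omega(s))$ with $s\le 2\varrho\le 2$; since \eqref{om} furnishes $\omega(s)\le L\cdot 2^{\gamma}$, this factor is absorbed into the generic constant.

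First, I would fix two radii $\varrho\le r<s\le 2\varrho$ and choose a standard cutoff $\eta\in C^{\infty}_{c}(B_{s})$ with $\eta\equiv 1$ on $B_{r}$, $0\le\eta\le 1$ and $|D\eta|\le c(n)/(s-r)$. The competitor $w:=u-\eta\bigl(u-(u)_{B_{2\varrho}}\bigr)$ belongs to $u+W^{1,\varphi}_{0}(B_{s})$ and is therefore admissible for the $\omega$-minimality condition on $B_{s}$. Applying the latter together with the bound $\nu\varphi(|z|)\le f(x,v,z)\le L\varphi(|z|)$ from $\eqref{assf}_{3}$ gives
\begin{flalign*}
\nu\int_{B_{s}}\varphi(|Du|)\,dx\le (1+\omega(s))L\int_{B_{s}}\varphi(|Dw|)\,dx.
\end{flalign*}

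Next, I would expand $Dw=(1-\eta)Du-D\eta\otimes(u-(u)_{B_{2\varrho}})$, observing $Dw\equiv 0$ on $B_{r}$, and use the $\Delta_{2}$-condition on $\varphi$ to estimate
\begin{flalign*}
\varphi(|Dw|)\le c\,\chi_{B_{s}\setminus B_{r}}\varphi(|Du|)+c\,\varphi\!\left(\frac{|u-(u)_{B_{2\varrho}}|}{s-r}\right),
\end{flalign*}
for a constant $c=c(\Delta_{2}(\varphi))$. Splitting $\int_{B_{s}}\varphi(|Du|)=\int_{B_{r}}\varphi(|Du|)+\int_{B_{s}\setminus B_{r}}\varphi(|Du|)$ and performing the usual hole-filling, i.e.\ adding a suitable multiple of $\int_{B_{r}}\varphi(|Du|)$ to both sides, yields
\begin{flalign*}
\int_{B_{r}}\varphi(|Du|)\,dx\le \theta\int_{B_{s}}\varphi(|Du|)\,dx+c\int_{B_{2\varrho}}\varphi\!\left(\frac{|u-(u)_{B_{2\varrho}}|}{s-r}\right)\,dx,
\end{flalign*}
for some $\theta=\theta(n,N,\nu,L,\Delta_{2}(\varphi))\in(0,1)$.

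To extract the dependence on $s-r$ I would invoke \eqref{incdec}$_{2}$ with $a=(s-r)/\varrho\in(0,1]$, which bounds $\varphi(|u-(u)_{B_{2\varrho}}|/(s-r))$ by $(\varrho/(s-r))^{q_{0}}\varphi(|u-(u)_{B_{2\varrho}}|/\varrho)$. The resulting inequality
\begin{flalign*}
\int_{B_{r}}\varphi(|Du|)\,dx\le \theta\int_{B_{s}}\varphi(|Du|)\,dx+c\,\Bigl(\tfrac{\varrho}{s-r}\Bigr)^{q_{0}}\!\int_{B_{2\varrho}}\varphi\!\left(\frac{|u-(u)_{B_{2\varrho}}|}{\varrho}\right)\,dx
\end{flalign*}
is of exactly the shape handled by Lemma \ref{L0} with $f(\tau)=\tau^{-q_{0}}$ (so $\sigma=q_{0}$) and $B=0$. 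Applying that lemma on the interval $[\varrho,2\varrho]$ produces $\int_{B_{\varrho}}\varphi(|Du|)\,dx\le c\int_{B_{2\varrho}}\varphi(|u-(u)_{B_{2\varrho}}|/\varrho)\,dx$; dividing by $|B_{\varrho}|\sim|B_{2\varrho}|$ delivers the averaged inequality in the statement.

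The only genuine subtlety is keeping the constant independent of $\omega$ and $\gamma$: bounding $\omega(s)$ uniformly via \eqref{om} gives $1+\omega(s)\le 1+L\cdot 2^{\gamma}\le 1+2L$, which can be absorbed into the dependence on $L$. Everything else is routine once the Orlicz Young/$\Delta_{2}$ machinery is in place.
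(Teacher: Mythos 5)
Your proposal is correct and follows essentially the same route as the paper: same competitor $u-\eta(u-(u)_{B_{2\varrho}})$, same use of $\omega$-minimality together with $\eqref{assf}_{3}$ and $\Delta_{2}$, same hole-filling, and the same appeal to the intrinsic iteration lemma (Lemma \ref{L0}). The only cosmetic difference is that you pull out the factor $(\varrho/(s-r))^{q_{0}}$ via $\eqref{incdec}_{2}$ before invoking Lemma \ref{L0} with $f(\tau)=\tau^{-q_{0}}$, whereas the paper applies Lemma \ref{L0} directly with $f(t)=\int_{B_{\varrho}}\varphi(|u-(u)_{B_{\varrho}}|/t)\,dx$; the two are equivalent.
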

\begin{proof}
Consider parameters $0<\varrho\le t<s\le 2\varrho$ and select $\eta \in C^{1}_{c}(B_{s})$ such that $\chi_{B_{t}}\le \eta\le \chi_{B_{s}}$ and $\snr{D\eta}\le (s-t)^{-1}$. The map $v:=u-\eta(u-(u)_{B_{2\varrho}})$ is an admissible competitor for $u$ over $B_{s}$, so, by definition of $\omega$-minimality and $\eqref{assf}_{3}$ we have
\begin{flalign*}
\int_{B_{t}}\varphi(\snr{Du}) \ dx \le c\int_{B_{s}\setminus B_{t}}\varphi(\snr{Du}) \ dx +c\int_{B_{2r}}\varphi\left(\left |\frac{u-(u)_{B_{2\varrho}}}{s-t} \right |\right) \ dx,
\end{flalign*}
for $c=c(\nu,L,\Delta_{2}(\varphi))$. Summing on both sides of the inequality in the previous display the quantity $c\int_{B_{t}}\varphi(\snr{Du}) \ dx$ we end up with \begin{flalign}\label{2}
\int_{B_{t}}\varphi(\snr{Du}) \ dx \le \frac{c}{1+c}\int_{B_{s}}\varphi(\snr{Du}) \ dx +c \int_{B_{\varrho}}\varphi\left(\left |\frac{u-(u)_{B_{\varrho}}}{s-t}\right|\right) \ dx,
\end{flalign}
where $c$ has the dependencies outlined before. Notice that the choice $$f(t)=\int_{B_{\varrho}}\varphi\left(\left |\frac{u-(u)_{B_{\varrho}}}{t} \right |\right) \ dx,$$ is admissible for an application of Lemma \ref{L0} in the light of the discussion at the beginning of Section \ref{pre}, since
\begin{flalign*}
f(\lambda t)=\int_{B_{\varrho}}\varphi\left(\left |\frac{u-(u)_{B_{\varrho}}}{\lambda t} \right |\right) \ dx\le \tilde{c}\lambda^{-q_{0}}\int_{B_{\varrho}}\varphi\left(\left |\frac{u-(u)_{B_{\varrho}}}{t} \right |\right) \ dx=\tilde{c}\lambda^{-q_{0}}f(t).
\end{flalign*}
Here $\tilde{c}=\tilde{c}(\Delta_{2}(\varphi))$. From Lemma \ref{L0} we obtain:
\begin{flalign}\label{1}
\int_{B_{\varrho/2}}\varphi(\snr{Du}) \ dx \le c\int_{B_{\varrho}}\varphi\left(\left |\frac{u-(u)_{B_{\varrho}}}{\varrho} \right |\right) \ dx,
\end{flalign}
with $c=c(n,N,\nu,L,\Delta_{2}(\varphi))$.
\end{proof}
Combining Caccioppoli's inequality with Proposition \ref{P4} we obtain inner higher integrability of Gehring type.
\begin{lemma}\label{L1}
Let $u \in W^{1,\varphi}(\Omega,\RN)$ be an $\omega$-minimizer of \eqref{fullf}, with $f$ satisfying assumptions  $\eqref{assf}_{3}$. Then there exists a positive $\delta_{g}=\delta_{g}(n,N,\nu,L,\Delta_{2}(\varphi),\Delta_{2}(\varphi,\varphi^{*}),\gamma)$ and a constant $c=c(n,N,\nu,L,\Delta_{2}(\varphi),\Delta_{2}(\varphi,\varphi^{*}),\gamma)$ such that, if $B_{2\varrho}\Subset \Omega$ is any ball with $\varrho\le 1$, there holds
\begin{flalign*}
\left(\mint_{B_{\varrho}}\varphi(\snr{Du})^{1+\sigma} \ dx\right)^{\frac{1}{1+\sigma}}\le c\mint_{B_{2\varrho}}\varphi(\snr{Du}) \ dx, \ \ \mathrm{for \ all \ }\sigma \in [0,\delta_{g}].
\end{flalign*}
\end{lemma}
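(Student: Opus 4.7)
The plan is the classical one for Gehring-type higher integrability: chain a Caccioppoli estimate with a Sobolev-Poincar\'e inequality to produce a reverse H\"older inequality for $\varphi(\snr{Du})$ with sub-unitary exponent on the right-hand side, and then invoke Gehring's lemma in its standard Euclidean formulation. Since $\varphi(\snr{Du})$ is a nonnegative scalar function, no Orlicz-tailored version of the self-improving lemma is needed: the $\Delta_2$-condition is only used to reduce to a setting where the Euclidean lemma applies directly.

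Fix any ball $B_{2\varrho}\Subset \Omega$ with $\varrho\le 1$. Caccioppoli's inequality from Lemma \ref{L4} reads
$$
\mint_{B_{\varrho}}\varphi(\snr{Du})\,dx \le c\mint_{B_{2\varrho}}\varphi\left(\frac{\snr{u-(u)_{B_{2\varrho}}}}{\varrho}\right)dx,
$$
with $c=c(n,N,\nu,L,\Delta_{2}(\varphi))$. The $\Delta_{2}$-condition (cf.\ Subsection \ref{N}) allows one to replace $\varrho$ with $2\varrho$ in the denominator at the cost of a multiplicative constant $c_{\varphi}$, after which the Sobolev-Poincar\'e estimate \eqref{spa} of Proposition \ref{P4} applied on $B_{2\varrho}$ gives an exponent $\theta=\theta(\Delta_{2}(\varphi,\varphi^{*}))\in(0,1)$ such that
$$
\mint_{B_{2\varrho}}\varphi\left(\frac{\snr{u-(u)_{B_{2\varrho}}}}{2\varrho}\right)dx \le c\left(\mint_{B_{2\varrho}}\varphi(\snr{Du})^{\theta}\,dx\right)^{1/\theta}.
$$
Chaining the last two displays yields the sub-unitary reverse H\"older inequality
$$
\mint_{B_{\varrho}}\varphi(\snr{Du})\,dx \le c\left(\mint_{B_{2\varrho}}\varphi(\snr{Du})^{\theta}\,dx\right)^{1/\theta},
$$
valid uniformly for all $B_{2\varrho}\Subset \Omega$ with $\varrho\le 1$, with $c=c(n,N,\nu,L,\Delta_{2}(\varphi),\Delta_{2}(\varphi,\varphi^{*}))$.

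The map $\varphi(\snr{Du})$ is nonnegative and in $L^{1}_{\mathrm{loc}}(\Omega)$ by $\eqref{assf}_{3}$ together with $u\in W^{1,\varphi}(\Omega,\RN)$, so the above display places us in the hypotheses of the standard Gehring self-improving lemma. Applying it produces a threshold $\delta_{g}>0$ with the asserted parameter dependence and
$$
\left(\mint_{B_{\varrho}}\varphi(\snr{Du})^{1+\sigma}\,dx\right)^{1/(1+\sigma)} \le c\mint_{B_{2\varrho}}\varphi(\snr{Du})\,dx
$$
for every $\sigma\in[0,\delta_{g}]$, which is the claim.

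The genuine work of the proof has been deposited in Caccioppoli's inequality: that is where $\omega$-minimality is processed, via $\omega(r)\le Lr^{\gamma}\le L$ on scales $r\le 1$, and this absorption is the mechanism producing the $\gamma$-dependence in the final constants. The only substantive point to check is the compatibility of the $\Delta_{2}$ framework with the quantitative Gehring machinery; this is transparent because \eqref{incdec} sandwiches $\varphi$ between the monomials $t^{s_{0}}$ and $t^{q_{0}}$, so the level-set/covering argument behind Gehring's lemma applies verbatim to $\varphi(\snr{Du})$.
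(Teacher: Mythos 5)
Your proof is correct and follows the paper's argument exactly: chain Lemma \ref{L4} with the Sobolev--Poincar\'e inequality \eqref{spa} to obtain the sub-unitary reverse H\"older inequality, then invoke Gehring's lemma. (A side note: the absorption of $\omega(r)$ inside Lemma~\ref{L4} uses only $\omega(r)\le Lr^{\gamma}\le L$ for $r\le 1$, so it does not actually produce a genuine dependence on $\gamma$; the $\gamma$ listed in the statement's constant is harmless but not forced by this step.)
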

\begin{proof}
Let us fix $B_{2\varrho}\Subset \Omega$, $0<\varrho\le 1$. If $u \in W^{1,\varphi}(\Omega,\RN)$ is an $\omega$-minimizer of \eqref{fullf}, then Lemma \ref{L4} and \eqref{spa} apply, thus rendering
\begin{flalign*}
\mint_{B_{\varrho}}\varphi(\snr{Du}) \ dx \le c \left(\mint_{B_{2\varrho}}\varphi(\snr{Du})^{\theta} \ dx\right)^{1/\theta},
\end{flalign*}
for some $\theta=\theta(\Delta_{2}(\varphi,\varphi^{*})) \in (0,1)$ and $c=c(n,N,\nu,L,\Delta_{2}(\varphi),\Delta_{2}(\varphi,\varphi^{*}),\gamma)$. Now the conclusion easily follows from a variant of Gehring's Lemma, see \cite[Chapter 6]{G}.
\end{proof}
The above result can be carried up to the boundary, as the next lemma shows.
\begin{lemma}\label{L2}
Let $B_{r}\Subset \Omega$ be any ball and $v \in W^{1,\varphi}(B_{r},\RN)$ be a solution to the Dirichlet problem
\begin{flalign*}
u+W^{1,\varphi}_{0}(B_{r},\RN)\ni w \mapsto \min \int_{B_{r}}f(x,w,Dw) \ dx,
\end{flalign*}
where $f$ satisfies $\eqref{assf}_{3}$ and $\varphi(\snr{Du})\in L^{1+\delta}_{\mathrm{loc}}(\Omega)$ for some $\delta>0$. Then there exists a $\sigma_{g}\in (0,\delta)$ such that
\begin{flalign*}
\left(\mint_{B_{r}}\varphi(\snr{Dv})^{1+\sigma} \ dx \right)^{\frac{1}{1+\sigma}}\le c \left(\mint_{B_{r}}\varphi(\snr{Du})^{1+\sigma} \ dx\right)^{\frac{1}{1+\sigma}}, \ \ \mbox{for all} \ \ \sigma \in [0,\sigma_{g}].
\end{flalign*}
Here $\sigma_{g}=\sigma_{g}(n,N,\nu,L,\Delta_{2}(\varphi),\Delta_{2}(\varphi,\varphi^{*}))$ and $c=c(n,N,\nu,L,\Delta_{2}(\varphi),\Delta_{2}(\varphi,\varphi^{*}))$.
\end{lemma}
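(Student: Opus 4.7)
The plan is to adapt the classical up-to-the-boundary Gehring argument of Giaquinta--Modica. Since $v - u \in W^{1,\varphi}_{0}(B_{r},\RN)$, first extend $v$ by $u$ outside $B_{r}$, setting
$$\tilde v := v\,\chi_{B_{r}} + u\,\chi_{\mathbb{R}^{n}\setminus B_{r}}\in W^{1,\varphi}_{\mathrm{loc}}(\mathbb{R}^{n},\RN),$$
so that $\mathrm{D}\tilde v = \mathrm{D}v$ on $B_{r}$ and $\mathrm{D}\tilde v = \mathrm{D}u$ outside. The task then reduces to proving a reverse H\"older inequality of Gehring type for $\varphi(\snr{\mathrm{D}\tilde v})$ that holds on every ball $B_{\rho}(y)$ with $B_{2\rho}(y)\subset B_{2r}$.

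For such a ball I would separate the interior case $B_{2\rho}(y)\Subset B_{r}$, in which Lemma \ref{L4} applies to $v$ regarded as a $0$-minimizer and yields the standard Caccioppoli bound by a Dirichlet-type term, from the boundary case $B_{2\rho}(y)\cap(\mathbb{R}^{n}\setminus B_{r})\neq\emptyset$. In the latter, pick a cutoff $\eta\in C^{1}_{c}(B_{\rho}(y))$ with $\eta\equiv 1$ on $B_{\rho/2}(y)$ and $\snr{\mathrm{D}\eta}\le c/\rho$, and test the minimality of $v$ against the admissible competitor $w:=v-\eta(v-u)\in u+W^{1,\varphi}_{0}(B_{r},\RN)$. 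Using $\eqref{assf}_{3}$, the $\Delta_{2}$-condition, Young's inequality \eqref{you}, and the hole-filling iteration supplied by Lemma \ref{L0}, one arrives at
$$\mint_{B_{\rho/2}(y)\cap B_{r}}\varphi(\snr{\mathrm{D}v})\, dx\le c\mint_{B_{\rho}(y)}\varphi\!\left(\frac{\snr{v-u}}{\rho}\right)dx+c\mint_{B_{\rho}(y)}\varphi(\snr{\mathrm{D}u})\, dx.$$
Since in the boundary regime $v-u$ vanishes on the portion $B_{\rho}(y)\setminus B_{r}$, of measure comparable to $\snr{B_{\rho}(y)}$, a variant of Proposition \ref{P4} for maps with zero values on a set of positive density (a direct consequence of \eqref{sp0} after extending $v-u$ by zero) controls the first term on the right by $c\bigl(\mint_{B_{\rho}(y)}\varphi(\snr{\mathrm{D}\tilde v})^{\theta}\, dx\bigr)^{1/\theta}$ for some $\theta\in(0,1)$. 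Combining both cases produces the universal reverse H\"older inequality
$$\left(\mint_{B_{\rho/2}(y)}\varphi(\snr{\mathrm{D}\tilde v})^{\theta}\, dx\right)^{1/\theta}\le c\mint_{B_{\rho}(y)}\varphi(\snr{\mathrm{D}\tilde v})\, dx+c\mint_{B_{\rho}(y)}\varphi(\snr{\mathrm{D}u})\, dx,$$
valid for every admissible $B_{\rho}(y)$.

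The hypothesis $\varphi(\snr{\mathrm{D}u})\in L^{1+\delta}_{\mathrm{loc}}(\Omega)$ then makes the right-hand side amenable to the up-to-the-boundary version of Gehring's lemma with an inhomogeneous datum (see \cite[Chapter 6]{G}), which upgrades the integrability of $\varphi(\snr{\mathrm{D}\tilde v})$ by some exponent $\sigma_{g}\in(0,\delta)$ with dependencies as stated; specializing to the scale of $B_{r}$ and restricting the left-hand integral to $B_{r}$ produces the claim. The main obstacle I expect is the boundary case: one must design the competitor so that the one-sided bound $\eqref{assf}_{3}$ on $f$ does not lose information on $v$, identify the correct vanishing-on-a-set-of-positive-density form of Sobolev--Poincar\'e for the extended map $v-u$, and carry out the Gehring iteration in its inhomogeneous form so as to accommodate the $\varphi(\snr{\mathrm{D}u})$-term on the right; the additional bookkeeping ensures that the final exponent $\sigma_{g}$ depends only on $n,N,\nu,L,\Delta_{2}(\varphi),\Delta_{2}(\varphi,\varphi^{*})$.
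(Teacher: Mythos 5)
Your proposal takes essentially the same route as the paper: extend $v$ by $u$ outside $B_{r}$, split into interior and boundary balls, obtain the boundary Caccioppoli estimate by testing minimality with the competitor $v-\eta(v-u)$ (the paper uses $\eta^{2}$ in place of $\eta$, a cosmetic difference), apply the hole-filling Lemma \ref{L0} together with the zero-boundary-values Sobolev--Poincar\'e estimate \eqref{sp0}, and close with the inhomogeneous up-to-the-boundary variant of Gehring's lemma. The density requirement you invoke for the boundary case (that $|B_{\rho}(y)\setminus B_{r}|$ be comparable to $|B_{\rho}(y)|$) is also the case distinction the paper employs, so the two arguments match at every step.
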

\begin{proof}
With $x_{0}\in B_{r}$, let us fix a ball $B_{\varrho}(x_{0})\subset \mathbb{R}^{n}$, $\varrho\le 1$. We consider first the case in which $\snr{B_{\varrho}(x_{0})\setminus B_{r}}\ge \snr{B_{\varrho}(x_{0})}/8$. For $0<\varrho/2\le t<s\le \varrho$, we take $\eta \in C^{1}_{c}(B_{s}(x_{0}))$ such that $\chi_{B_{t}(x_{0})}\le \eta\le \chi_{B_{s}(x_{0})}$, $\snr{D\eta}\le (s-t)^{-1}$ and define $\psi:=v-\eta^{2}(v-u)$. The minimality of $v$ and our choice of $\eta$ render
\begin{flalign*}
\int_{B_{t}(x_{0})\cap B_{r}}\varphi(\snr{Dv}) \ dx \le &c\int_{(B_{s}(x_{0})\setminus B_{t}(x_{0}))\cap B_{r}}\varphi(\snr{Dv}) \ dx\nonumber \\
&+c\int_{B_{\varrho}(x_{0})\cap B_{r}}\varphi(\snr{Du})+\varphi\left(\left |\frac{u-v}{s-t} \right|\right) \ dx,
\end{flalign*}
with $c=c(n,N,\nu,L,\Delta_{2}(\varphi))$, and summing on both sides of the above inequality the quantity $$\int_{B_{t}(x_{0})\cap B_{r}}\varphi(\snr{Dv}) \ dx$$
and applying Lemma \ref{L0} as we did in the proof of Lemma \ref{L4} we obtain
\begin{flalign*}
\int_{B_{\varrho/2}(x_{0})\cap B_{r}}\varphi(\snr{Dv}) \ dx \le c\int_{B_{\varrho}(x_{0})\cap B_{r}}\varphi(\snr{Du})+\varphi\left(\left |\frac{u-v}{\varrho} \right|\right) \ dx,
\end{flalign*}
with $c=c(n,N,\nu,L,\Delta_{2}(\varphi))$. Applying \eqref{sp0}, after standard manipulations we can conclude that
\begin{flalign*}
\mint_{B_{\varrho/2}(x_{0})\cap B_{r}}\varphi(\snr{Dv}) \ dx \le c\left\{\mint_{B_{\varrho}(x_{0})\cap B_{r}}\varphi(\snr{Du}) \ dx +\left(\mint_{B_{\varrho(x_{0})\cap B_{r}}}\varphi(\snr{Dv})^{\theta} \ dx\right)^{1/\theta}\right\}, 
\end{flalign*}
with $c=c(n,N,\nu,L,\Delta_{2}(\varphi), \Delta_{2}(\varphi,\varphi^{*}))$. We next consider the situation when it is $B_\varrho \Subset B_r$. In this case we have the usual Sobolev-Poincar\'e's inequality as in the interior case and there is no loss of generality in taking the same exponent. The two cases can be combined via a standard covering argument. More precisely, upon defining 
\begin{flalign*}
V(x):=\begin{cases}
\ \varphi(\snr{Dv})^{\theta} \quad &x\in B_{\varrho}(x_{0})\\
\ 0 \quad &x\in \mathbb{R}^{n}\setminus B_{\varrho}(x_{0})
\end{cases}\quad and \quad U(x):=\begin{cases}
\ \varphi(\snr{Du}) \quad &x\in B_{\varrho}(x_{0})\\
\ 0 \quad &x\in \mathbb{R}^{n}\setminus B_{\varrho}(x_{0})
\end{cases}
\end{flalign*}
we get
\begin{flalign*}
\mint_{B_{\varrho/2}(x_{0})}[V(x)]^{1/\theta} \, dx\le c\left \{ \left(\mint_{B_{\varrho}(x_{0})}V(x) \, dx\right)^{1/\theta}+\mint_{B_{\varrho}(x_{0})}U(x) \, dx \right\},
\end{flalign*}
with $c\equiv c(n,N,\nu,L,\Delta_{2}(\varphi),\Delta_{2}(\varphi,\varphi^{*}))$ and $0<\theta<1$. At this point the conclusion follows by the minimality of $v$ and a standard variant of Gehring's lemma. 
\end{proof}

\section{Proof of Theorem \ref{frac}}\label{te1}
To show this result, we use the "Variational difference quotient technique" developed in the contest of autonomous functionals with standard $p$-growth in \cite{KM1}. For simplicity, we split the proof in four steps.\\\\
\emph{Step 1: introducing a scale}. Let $\Omega_{0}\Subset \Omega$ and $\beta \in (0,1)$, which value will be fixed later on in the proof. We consider vectors $h \in \mathbb{R}^{n}\setminus \{0\}$ such that
\begin{flalign}\label{3}
0<\snr{h}\le \min\left\{\left(\frac{\dist(\Omega_{0},\partial \Omega)}{1000\sqrt{n}}\right)^{\frac{1}{\beta}},\left(\frac{1}{1000}\right)^{\frac{1}{1-\beta}}\right\}.
\end{flalign}
For such values of $\snr{h}$ and $x_{0}\in \Omega$ with $\dist(x_{0},\Omega_{0})<\snr{h}^{\beta}$ we define $B(h):=B_{\snr{h}^{\beta}}(x_{0})$. With the above restriction on $\snr{h}$, we have in particular that $B_{10\sqrt{n}\snr{h}^{\beta}}(x_{0})\Subset \Omega$.
\\\\
\emph{Step 2: a comparison map}. Let $v \in u+W^{1,\varphi}_{0}(B(h),\RN)$ be the unique minimizer of the functional
\begin{flalign*}
u+W^{1,\varphi}_{0}(B(h),\RN)\ni w \mapsto \min \int_{B(h)}g(x,Dw) \ dx. 
\end{flalign*}
Since $g$ satisfies $\eqref{assg}$, existence and uniqueness follow by Direct methods. The minimality of $v$ and $\eqref{assg}_{3}$ yield that
\begin{flalign}\label{4}
\begin{cases}
\ \int_{B(h)}g(x,Dv) \ dx \le \int_{B(h)}g(x,Du) \ dx \le L\int_{B_{h}}\varphi(\snr{Du}) \ dx,\\
\ \int_{B(h)}\varphi(\snr{Dv}) \ dx \le \frac{L}{\nu}\int_{B(h)}\varphi(\snr{Du}) \ dx.
\end{cases}
\end{flalign}
The $\omega$-minimality of $u$ renders that
\begin{flalign*}
\int_{B(h)}g(x,Du) \ dx \le (1+\omega(\snr{h}^{\beta}))\int_{B(h)}g(x,Dv) \ dx,
\end{flalign*}
so, recalling also $\eqref{4}_{1}$,
\begin{flalign}\label{5}
\int_{B(h)}g(x,Du)-g(x,Dv) \ dx \le L\omega(\snr{h}^{\beta})\int_{B(h)}\varphi(\snr{Du}) \ dx.
\end{flalign}
From $\eqref{assg}_{6}$, the minimality of $v$ and \eqref{5} we get
\begin{flalign}\label{6}
c\int_{B(h)}&\varphi''(\snr{Du}+\snr{Dv})\snr{Du-Dv}^{2} \ dx \nonumber \\
=&c\int_{B(h)}\varphi''(\snr{Du}+\snr{Dv})\snr{Du-Dv}^{2} \ dx+c\int_{B(h)}\partial g(x,Dv)(Du-Dv) \ dx\nonumber  \\
\le &\int_{B(h)}g(x,Du)-g(x,Dv) \ dx\le L\omega(\snr{h}^{\beta})\int_{B(h)}\varphi(\snr{Du}) \ dx.
\end{flalign}
for $c=c(n,N,\nu,L)$. Combining \eqref{6} with $\eqref{phi7}$ we obtain
\begin{flalign}\label{7}
\int_{B(h)}\snr{V_{\varphi}(Du)-V_{\varphi}(Dv)}^{2} \ dx \le c\omega(\snr{h}^{\beta})\int_{B(h)}\varphi(\snr{Du}) \ dx,
\end{flalign}
with $c=c(n,N,\nu,L,c_{\varphi})$.\\\\
\emph{Step 3: a fractional estimate for $u$}. From \eqref{phi9}, \eqref{om}, \eqref{estv}, $\eqref{4}_{2}$ and \eqref{7} we estimate
\begin{flalign}\label{8}
\int_{B_{\snr{h}^{\beta}/40}(x_{0})}\snr{\tau_{h}V_{\varphi}(Du)}^{2} \ dx \le &c \int_{B_{\snr{h}^{\beta}/40}(x_{0})}\snr{\tau_{h}V_{\varphi}(Dv)}^{2} \ dx+c\int_{B(h)}\snr{V_{\varphi}(Du)-V_{\varphi}(Dv)}^{2}\ dx\nonumber \\
\le & c\left(\snr{h}^{2-2\beta}+\snr{h}^{1+\alpha-\beta}+\snr{h}^{2\alpha}+\snr{h}^{\beta\gamma}\right)\int_{B(h)}\varphi(\snr{Du}) \ dx\nonumber \\
\le& c\snr{h}^{2\sigma}\int_{B(h)}\varphi(\snr{Du}) \ dx, 
\end{flalign}
with $c=c(n,N,\nu,L,\Delta_{2}(\varphi),\Delta_{2}(\varphi,\varphi^{*}))$,
\begin{flalign*}
\beta:=\min\left\{ \frac{2}{2+\gamma},\frac{1+\alpha}{1+\gamma} \right\} \ \ \mbox{and} \ \ \sigma:=\min\left\{\alpha,\frac{\gamma}{2+\gamma},\frac{\gamma(1+\alpha)}{2(1+\gamma)}\right\}.
\end{flalign*}
\emph{Step 4: a final covering argument}. We use the inclusion
\begin{flalign*}
Q_{\snr{h}^{\beta}/40\sqrt{n}}(x_{0})\subset B_{\snr{h}^{\beta}/40}(x_{0}) \quad \mathrm{and}\quad B_{\snr{h}^{\beta}}(x_{0})\subset Q_{\snr{h}^{\beta}}(x_{0}),
\end{flalign*}
to get from \eqref{8},
\begin{flalign}\label{9}
\int_{Q_{\snr{h}^{\beta}/40\sqrt{n}}(x_{0})}\snr{\tau_{h}V_{\varphi}(Du)}^{2} \ dx \le c\snr{h}^{2\sigma}\int_{Q_{\snr{h}^{\beta}}(x_{0})}\varphi(\snr{Du}) \ dx.
\end{flalign}
Fix a vector $h \in \mathbb{R}^{n}\setminus \{0\}$ such that $\snr{h}$ satisfies \eqref{3}. Notice that $x_{0}$ is any point in $\Omega$ such that $\dist(x_{0},\Omega_{0})<\snr{h}^{\beta}$, so we can find a finite family of disjoint cubes $\{Q^{i}\}_{i=1}^{K}$, $Q^{i}=Q^{i}_{\snr{h}^{\beta}/40\sqrt{n}}(x_{i})$, with $K=K(\Omega_{0},\snr{h})\in \N$ such that $\Omega_{0}\subset \bigcup_{i=1}^{K} \overline{Q^{i}}$, thus estimate \eqref{9} is valid with $x_{0}\equiv x_{i}$ for all $i \in \{1,\cdots,K\}$. Notice that, since the $Q^{i}$'s are disjoint, each of the dilated cubes $40\sqrt{n}Q^{i}$ intersects at the most $(80\sqrt{n})^{n}$ of the other dilated cubes $40\sqrt{n}Q^{j}$. Furthermore, in view of \eqref{3} we also know that $40\sqrt{n}Q^{i}\subset B_{\sqrt{n}\snr{h}^{\beta}}(x_{i})\subset \Omega$. Hence, if we take $x_{0}\equiv x_{i}$ in \eqref{9} and sum the resulting inequalities over $i \in \{1,\cdots,K\}$ we then have
\begin{flalign*}
\int_{\Omega_{0}}\snr{\tau_{h}V_{\varphi}(Du)}^{2} \ dx \le c\snr{h}^{2\sigma}\int_{\Omega}\varphi(\snr{Du}) \ dx,
\end{flalign*}
with $c=c(n,N,\nu,L,\Delta_{2}(\varphi),\Delta_{2}(\varphi,\varphi^{*}),\alpha,\gamma, \Omega_{0})$ independent on $\snr{h}$. Now we can apply Lemma \ref{frem} to conclude, after a standard covering argument that
\begin{flalign}\label{fffrac}
V_{\varphi}(Du)\in W^{\delta,2}_{\mathrm{loc}}(\Omega,\mathbb{R}^{N\times n}) \ \ \mathrm{for \ all \ } \delta \in (0,\sigma).
\end{flalign}
Notice that any given ball $B_{r}\Subset \Omega$ is an extension domain for $W^{\delta,2}$, so, from \eqref{fffrac} and Lemma \ref{intem}, we obtain that $V_{\varphi}(Du)\in L^{\frac{2n}{n-2\delta}}(B_{r},\mathbb{R}^{N\times n})$ for all $\delta \in (0,\sigma)$. Again after covering, we recover \eqref{res}. 

\section{Proof of Theorem \ref{reg}}\label{te2}
For the reader's convenience, we frame this proof into five steps. Precisely, in the first one we show an intrinsic decay estimate for $\int\varphi(\snr{Du}) \ dx$, which in turn implies the $\beta$-H\"older continuity of $u$ for any $\beta \in (0,1)$. Then we look at the structure of the singular set $\Sigma_{u}$ and use the characteristics of $\varphi$ and the inner higher integrability result in Lemma \ref{L1} to obtain an upper bound on $\dim_{\mathcal{H}}(\Sigma_{u})$. Finally, a straightforward manipulation of the estimates obtained so far renders the local H\"older continuity of $V_{\varphi}(Du)$.\\\\ 
\emph{Step 1: intrinsic Morrey decay}. We start by assuming that $1<s_{0}(1+\delta_{g})\le n$, where $\delta_{g}$ is the higher integrability threshold provided by Lemma \ref{L1} and $s_{0}$ is the exponent of the power function controlled (up to constants) from the above by $\varphi$. In case $s_{0}(1+\delta_{g})>n$, by \eqref{incdec1} and Morrey's embedding theorem we get that $u \in C^{0,\lambda}_{\mathrm{loc}}(\Omega,\RN)$, with $\lambda:=1-\frac{n}{s_{0}(1+\delta_{g})}$, and the procedure is slightly different, this case will be treated in \emph{Step 3}. Let $B_{r}=B_{r}(x_{0})$, $r\le 1/2$, be any ball such that $B_{2r}\Subset \Omega$ and assume that the smallness condition
\begin{flalign}\label{small}
E(B_{2r}):=\mint_{B_{2r}}\varphi(\snr{Du}) \ dx <\varphi\left(\frac{\varepsilon}{r}\right)
\end{flalign}
holds. We introduce the comparison map $v\in W^{1,\varphi}(B_{r},\RN)$ defined as a solution to the Dirichlet problem
\begin{flalign}\label{fz}
u+W^{1,\varphi}_{0}(B_{r},\RN)\ni w\mapsto \min\int_{B_{r}}f(x_{0},(u)_{B_{r}},Dw) \ dx.
\end{flalign}
Notice that, by $\eqref{assf}_{3}$ and the minimality of $v$, we have
\begin{flalign}\label{11}
\mint_{B_{r}}\varphi(\snr{Dv}) \ dx \le& \nu^{-1}\mint_{B_{r}}f(x_{0},(u)_{B_{r}},Dv) \ dx\nonumber \\
\le& \nu^{-1}\mint_{B_{r}}f(x_{0},(u)_{B_{r}},Du) \ dx \le \frac{L}{\nu}\mint_{B_{r}}\varphi(\snr{Du}) \ dx.
\end{flalign}
We recall the strict monotonicity property 
\begin{flalign}\label{sm}
\snr{V_{\varphi}(z_{1})-V_{\varphi}(z_{2})}^{2}&+c\langle\partial f(x_{0},(u)_{B_{r}},z_{1}),z_{2}-z_{1}\rangle\nonumber \\
\le &c\left(f(x_{0},(u)_{B_{r}},z_{2})-f(x_{0},(u)_{B_{r}},z_{1})\right),
\end{flalign}
for $c=c(n,N,\nu,c_{\varphi})$, so, from the minimality of $v$ and \eqref{sm} we obtain that
\begin{flalign*}
\mint_{B_{r}}&\snr{V_{\varphi}(Du)-V_{\varphi}(Dv)}^{2} \ dx \le c\mint_{B_{r}}f(x_{0},(u)_{B_{r}},Du)-f(x_{0},(u)_{B_{r}},Dv) \ dx \\
=&c\mint_{B_{r}}f(x_{0},(u)_{B_{r}},Du)-f(x_{0},u,Du) \ dx +c\mint_{B_{r}}f(x_{0},u,Du)-f(x,u,Du) \ dx \\
&+c\mint_{B_{r}}f(x,u,Du)-f(x,v,Dv) \ dx+c\mint_{B_{r}}f(x,v,Dv)-f(x,(v)_{B_{r}},Dv) \ dx \\
&+c\mint_{B_{r}}f(x,(v)_{B_{r}},Dv)-f(x_{0},(v)_{B_{r}},Dv) \ dx\nonumber \\
&+c\mint_{B_{r}}f(x_{0},(v)_{B_{r}},Dv)-f(x_{0},(u)_{B_{r}},Dv) \ dx=:\mathrm{(I)}+\mathrm{(II)}+\mathrm{(III)}+\mathrm{(IV)}+\mathrm{(V)}+\mathrm{(VI)},
\end{flalign*}
with $c=c(n,N,\nu,c_{\varphi})$. Before start working on terms $\mathrm{(I)}-\mathrm{(VI)}$, let us consider some quantities which will be recurrent in the forthcoming estimates. From $\eqref{omt}$, Jensen's inequalities (for both concave and convex functions), \eqref{spa} and \eqref{small} we get
\begin{flalign}\label{omt1}
\mint_{B_{r}}\tilde{\omega}(\snr{u-(u)_{B_{r}}}) \ dx \le &\tilde{\omega}\left(r\varphi^{-1}\circ\varphi\left(\mint_{B_{r}}\left |\frac{u-(u)_{B_{r}}}{r} \right | \ dx\right)\right)\nonumber \\
\le&\tilde{\omega}\left(r\varphi^{-1}\left(\mint_{B_{r}}\varphi\left(\left |\frac{u-(u)_{B_{r}}}{r} \right |\right) \ dx\right)\right)\nonumber \\
\le &c\tilde{\omega}\left(r\varphi^{-1}\left(\mint_{B_{r}}\varphi(\snr{Du}) \ dx\right)\right)\le c\tilde{\omega}\left(r\varphi^{-1}\left(\varphi(\varepsilon/r)\right)\right)\le c\varepsilon^{\alpha},
\end{flalign}
where $c=c(n,N,\Delta_{2}(\varphi),\Delta_{2}(\varphi,\varphi^{*}))$. In a totally similar way, but using this time \eqref{11}, we obtain
\begin{flalign}\label{omt2}
\mint_{B_{r}}\tilde{\omega}(\snr{v-(v)_{B_{r}}}) \ dx \le &\tilde{\omega}\left(r\varphi^{-1}\left(\mint_{B_{r}}\varphi(\snr{Dv}) \ dx\right)\right)\nonumber \\
\le& c\tilde{\omega}\left(r\varphi^{-1}\left(\mint_{B_{r}}\varphi(\snr{Du}) \ dx\right)\right)\le c\varepsilon^{\alpha},
\end{flalign}
for $c=c(n,N,\nu,L,\Delta_{2}(\varphi),\Delta_{2}(\varphi,\varphi^{*}))$. Finally, using \eqref{sp0} and again \eqref{11} we have
\begin{flalign}\label{omt3}
\mint_{B_{r}}\tilde{\omega}(\snr{u-v}) \ dx \le &c\tilde{\omega}\left(r\varphi^{-1}\left(\mint_{B_{r}}\varphi(\snr{Du-Dv}) \ dx\right)\right)\le c\varepsilon^{\alpha},
\end{flalign}
with $c=c(n,N,\nu,L,\Delta_{2}(\varphi),\Delta_{2}(\varphi,\varphi^{*}))$. From $\eqref{assf}_{7}$, Lemma \ref{L1} and \eqref{omt1} we estimate
\begin{flalign}\label{10}
\snr{\mathrm{(I)}}\le &c\mint_{B_{r}}\tilde{\omega}(\snr{u-(u)_{B_{r}}})\varphi(\snr{Du}) \ dx \nonumber \\
\le&c\left(\mint_{B_{r}}\tilde{\omega}(\snr{u-(u)_{B_{r}}}) \ dx\right)^{\frac{\delta_{g}}{1+\delta_{g}}}\left(\mint_{B_{r}}\varphi(\snr{Du})^{1+\delta_{g}} \ dx\right)^{\frac{1}{1+\delta_{g}}}\nonumber\\
\le& c\varepsilon^{\frac{\alpha\delta_{g}}{1+\delta_{g}}}\mint_{B_{2r}}\varphi(\snr{Du}) \ dx,
\end{flalign}
with $c=c(n,N,\nu,L,\Delta_{2}(\varphi),\Delta_{2}(\varphi,\varphi^{*}))$. By $\eqref{assf}_{6}$ we now have
\begin{flalign}\label{12}
\snr{\mathrm{(II)}}\le L\mint_{B_{r}}\tilde{\omega}(\snr{x-x_{0}})\varphi(\snr{Du}) \ dx\le cr^{\alpha}\mint_{B_{2r}}\varphi(\snr{Du}) \ dx,
\end{flalign}
with $c=c(n,N,L)$. The $\omega$-minimality of $u$ and \eqref{11} render
\begin{flalign}\label{13}
\snr{\mathrm{(III)}}\le& \omega(r)\mint_{B_{r}}f(x,v,Dv) \ dx \nonumber \\
\le &L\omega(r)\mint_{B_{r}}\varphi(\snr{Dv}) \ dx \le \frac{L^{2}}{\nu}r^{\gamma}\mint_{B_{r}}\varphi(\snr{Du}) \ dx.
\end{flalign}
Term $\mathrm{(IV)}$ can be estimated as term $\mathrm{(I)}$, but this time we need \eqref{omt2}, \eqref{11} and Lemma \ref{L2}:
\begin{flalign}\label{14}
\snr{\mathrm{(IV)}}\le &c \left(\mint_{B_{r}}\tilde{\omega}(\snr{v-(v)_{B_{r}}}) \ dx\right)^{\frac{\sigma_{g}}{1+\sigma_{g}}}\left(\mint_{B_{r}}\varphi(\snr{Dv})^{1+\sigma_{g}} \ dx\right)^{\frac{1}{1+\sigma_{g}}}\nonumber \\
\le &cr^{\frac{\alpha\sigma_{g}}{1+\sigma_{g}}}\left(\mint_{B_{r}}\varphi(\snr{Du})^{1+\sigma_{g}} \ dx\right)^{\frac{1}{1+\sigma_{g}}} \le c \varepsilon^{\frac{\alpha\sigma_{g}}{1+\sigma_{g}}}\mint_{B_{2r}}\varphi(\snr{Du}) \ dx,
\end{flalign}
with $c=c(n,N,\nu,L,\Delta_{2}(\varphi),\Delta_{2}(\varphi,\varphi^{*}))$. By $\eqref{assf}_{6}$ and \eqref{11} we get
\begin{flalign}\label{15}
\snr{\mathrm{(V)}}\le c\mint_{B_{r}}\tilde{\omega}(\snr{x-x_{0}})\varphi(\snr{Dv}) \ dx \le cr^{\alpha}\mint_{B_{2r}}\varphi(\snr{Du}) \ dx,
\end{flalign}
for $c=c(n,N,\nu,L)$. Finally, from $\eqref{assf}_{7}$, \eqref{omt3}, \eqref{11} and Lemma \ref{L2} we estimate
\begin{flalign}\label{16}
\snr{\mathrm{(VI)}}\le c\left(\mint_{B_{r}}\tilde{\omega}(\snr{u-v}) \ dx \right)^{\frac{\sigma_{g}}{1+\sigma_{g}}}\left(\mint_{B_{r}}\varphi(\snr{Dv})^{1+\sigma_{g}} \ dx\right)^{\frac{1}{1+\sigma_{g}}}\le cr^{\frac{\alpha \sigma_{g}}{1+\sigma_{g}}}\mint_{B_{2r}}\varphi(\snr{Du}) \ dx,
\end{flalign}
where $c=c(n,N,\nu,L,\Delta_{2}(\varphi),\Delta_{2}(\varphi,\varphi^{*}))$. Collecting estimates \eqref{10}-\eqref{16} and recalling that, by Lemma \ref{L2}, $\sigma_{g}<\delta_{g}$, we can conclude that
\begin{flalign}\label{17}
\mint_{B_{r}}\snr{V_{\varphi}(Du)-V_{\varphi}(Dv)}^{2} \ dx \le c(r^{\mu}+\varepsilon^{\frac{\alpha\sigma_{g}}{1+\sigma_{g}}})\mint_{B_{2r}}\varphi(\snr{Du}) \ dx,
\end{flalign}
with $\mu:=\min\left\{\frac{\alpha \sigma_{g}}{1+\sigma_{g}},\gamma\right\}$ and $c=c(n,N,\nu,L,\Delta_{2}(\varphi),\Delta_{2}(\varphi,\varphi^{*}),c_{\varphi})$. Notice that, since $f(\cdot)$ satisfies \eqref{assf2} and $v$ solves \eqref{fz}, we can apply \eqref{aff6}, thus obtaining, for $0<t<s<r$,
\begin{flalign}\label{19}
\int_{B_{t}}\varphi(\snr{Dv}) \ dx \le c(t/s)^{n}\int_{B_{s}}\varphi(\snr{Dv}) \ dx,
\end{flalign}
with $c=c(n,N,\nu,L,\Delta_{2}(\varphi),\Delta_{2}(\varphi,\varphi^{*}),c_{\varphi})$. Now we fix $\tau \in \left(0,\frac{1}{4}\right)$ and use \eqref{phi9} and \eqref{17} to estimate
\begin{flalign}\label{18}
\int_{B_{2\tau r}}\varphi(\snr{Du}) \ dx \le &c \left\{\int_{B_{\tau r}}\snr{V_{\varphi}(Du)-V_{\varphi}(Dv)}^{2} \ dx +\int_{B_{2\tau r}}\varphi(\snr{Dv}) \ dx\right\}\nonumber \\
\le &c \left\{\int_{B_{r}}\snr{V_{\varphi}(Du)-V_{\varphi}(Dv)}^{2} \ dx +\tau^{n}\int_{B_{r}}\varphi(\snr{Dv}) \ dx\right\}\nonumber \\
\le &c\left(r^{\mu}+\varepsilon^{\frac{\alpha\sigma_{g}}{1+\sigma_{g}}}+\tau^{n}\right)\int_{B_{2r}}\varphi(\snr{Du}) \ dx\nonumber \\
\le& \tau^{n-\sigma}\left(cr^{\mu}\tau^{\sigma-n}+c\varepsilon^{\frac{\alpha\sigma_{g}}{1+\sigma_{g}}}\tau^{\sigma-n}+c\tau^{\sigma}\right)\int_{B_{2r}}\varphi(\snr{Du})\ dx,
\end{flalign}
for $c=c(n,N,\nu,L,\Delta_{2}(\varphi),\Delta_{2}(\varphi,\varphi^{*}),c_{\varphi})$ and any $\sigma \in (0,n)$. For the ease of notation, set $2r=\varrho$. In these terms, \eqref{18} reads as
\begin{flalign}\label{20}
\int_{B_{\tau \varrho}}\varphi(\snr{Du}) \ dx\le \tau^{n-\sigma}\left(c\varrho^{\mu}\tau^{\sigma-n}+c\varepsilon^{\frac{\alpha\sigma_{g}}{1+\sigma_{g}}}\tau^{\sigma-n}+c\tau^{\sigma}\right)\mint_{B_{\varrho}}\varphi(\snr{Du}) \ dx.
\end{flalign}
Now fix $\tau=\tau(n,N,\nu,L,\Delta_{2}(\varphi),\Delta_{2}(\varphi,\varphi^{*}),c_{\varphi},\sigma) \in \left(0,\frac{1}{4}\right)$ such that $c\tau^{\sigma}<\frac{1}{3}$, $\varepsilon \in (0,1)$ so small that $c\varepsilon^{\frac{\alpha\sigma_{g}}{1+\sigma_{g}}} \tau^{\sigma-n}<1/3 $ and a threshold radius $0<2r<R_{*}\le 1$ such that $R_{*}^{\mu}c\tau^{\sigma-n}<1/3$. Here we see that $\varepsilon=\varepsilon(n,N,\nu,L,\Delta_{2}(\varphi),\Delta_{2}(\varphi,\varphi^{*}),c_{\varphi},\sigma)$,\\ $R_{*}=R_{*}(n,N,\nu,L,\Delta_{2}(\varphi),\Delta_{2}(\varphi,\varphi^{*}),c_{\varphi},\sigma)$ and $\sigma\in (0,n)$ is still to be fixed. With these specifics, \eqref{20} becomes
\begin{flalign}\label{21}
\int_{B_{\tau \varrho}}\varphi(\snr{Du}) \ dx \le \tau^{n-\sigma}\int_{B_{\varrho}}\varphi(\snr{Du}) \ dx.
\end{flalign}
Averaging in \eqref{21}, taking $\sigma \in (0,1)$ and recalling the notation adopted in \eqref{small}, we obtain
\begin{flalign*}
E(B_{\tau\varrho})\le \tau^{-\sigma}E(B_{\varrho})=\tau^{1-\sigma}\tau^{-1}E(B_{\varrho})<\tau^{1-\sigma}\tau^{-1}\varphi(\varepsilon/\varrho)\le \tau^{1-\sigma}\varphi\left(\frac{\varepsilon}{\tau \varrho}\right)\le \varphi\left(\frac{\varepsilon}{\tau \varrho}\right),
\end{flalign*}
thus $E(B_{\tau \varrho})<\varphi\left(\frac{\varepsilon}{\tau \varrho}\right)$, so iterations are legal. In particular, for $\kappa \in \N$ we obtain
\begin{flalign}\label{22}
\int_{B_{\tau^{\kappa}\varrho}}\varphi(\snr{Du}) \ dx \le \tau^{\kappa(n-\sigma) }\int_{B_{\varrho}}\varphi(\snr{Du}) \ dx.
\end{flalign}
If $0<s< \varrho\le R_{*}$, we can easily find a $\kappa \in \N$ such that $\tau^{\kappa+1}\varrho\le s\le\tau^{\kappa}\varrho$, so, using \eqref{22} we obtain
\begin{flalign}\label{23}
\int_{B_{s}}\varphi(\snr{Du}) \ dx \le& \int_{B_{\tau^{\kappa}\varrho}}\varphi(\snr{Du}) \ dx \le \tau^{\sigma-n}\tau^{(\kappa +1)(n-\sigma)}\int_{B_{\varrho}}\varphi(\snr{Du}) \ dx\nonumber \\
\le& c(s/\varrho)^{n-\sigma}\int_{B_{\varrho}}\varphi(\snr{Du}) \ dx,
\end{flalign}
for $c=c(n,N,\nu,L,\Delta_{2}(\varphi),\Delta_{2}(\varphi,\varphi^{*}),\sigma)$. Now, the continuity of Lebesgue's integral renders that if $E(B_{\varrho}(x_{0}))<\varphi(\varepsilon/\varrho)$, then $E(B_{\varrho}(y))<\varphi(\varepsilon/\varrho)$ for all $y$ in a neighborhood $I$ of $x_{0}$, see \cite[Chapter 9]{G}, so we can conclude that, for those $y$ there holds
\begin{flalign}\label{24}
\int_{B_{s}(y)}\varphi(\snr{Du}) \ dx \le c(s/\varrho)^{n-\sigma}\int_{B_{\varrho}(y)}\varphi(\snr{Du}) \ dx,
\end{flalign}
for all $0<s\le \varrho\le R_{*}\le 1$, provided \eqref{small} holds. Let us get rid of the restriction $\varrho\le R_{*}$. We distinguish two scenarios: $0<s\le R_{*}<\varrho\le 1$ and $0<R_{*}<s< \varrho\le 1$. In the first case, by \eqref{24} we have that
\begin{flalign*}
\int_{B_{s}(y)}\varphi(\snr{Du}) \ dx \le& c(s/R_{*})^{n-\sigma}\int_{B_{R_{*}}(y)}\varphi(\snr{Du}) \ dx\\
\le& c (s/\varrho)^{n-\sigma}(\varrho/R_{*})^{n-\sigma}\int_{B_{\varrho}(y)}\varphi(\snr{Du}) \ dx\le c(s/\varrho)^{n-\sigma}\int_{B_{\varrho}(y)}\varphi(\snr{Du}) \ dx,
\end{flalign*}
for $c=c(n,N,\nu,L,\Delta_{2}(\varphi),\Delta_{2}(\varphi,\varphi^{*}),\sigma)$, while in the second,
\begin{flalign*}
\int_{B_{s}}\varphi(\snr{Du}) \ dx\le& (s/\varrho)^{n-\sigma}(R_{*}/\varrho)^{n-\sigma}(s/R_{*})^{n-\sigma}\int_{B_{\varrho}}\varphi(\snr{Du}) \ dx\\
\le& R_{*}^{\sigma-n}(s/\varrho)^{n-\sigma}\int_{B_{\varrho}}\varphi(\snr{Du}) \ dx\le c(s/\varrho)^{n-\sigma}\int_{B_{\varrho}}\varphi(\snr{Du}) \ dx,
\end{flalign*}
for $c=c(n,N,\nu,L,\Delta_{2}(\varphi),\Delta_{2}(\varphi,\varphi^{*}),\sigma)$, hence, if \eqref{small} is in force, the intrinsic Morrey decay for $\varphi(\snr{Du})$ in \eqref{24} holds for any couple $0<s<\varrho\le 1$. \\\\
\emph{Step 2: the singular set}. Define the set
\begin{flalign*}
\Omega_{u}:=\left\{x_{0}\in \Omega\colon \mint_{B_{\varrho}(x_{0})}\varphi(\snr{Du}) \ dx<\varphi(\varepsilon/\varrho) \ \mathrm{for \ some \ }\varrho \in (0,1)\right\},
\end{flalign*}
which is open because of the continuity of Lebesgue's integral. Define $\Sigma_{u}:=\Omega\setminus \Omega_{u}$ and notice that, by very definition,
\begin{flalign*}
\Sigma_{u}\subset \left\{x_{0}\in \Omega\colon \limsup_{\varrho\to 0}\frac{1}{\varphi(\varrho^{-1})}\mint_{B_{\varrho}(x_{0})}\varphi(\snr{Du}) \ dx >0\right\}.
\end{flalign*}
As discussed at the beginning of Section \ref{pre}, we know that, for $t\ge 1$, $\varphi(t)\ge \varphi(1)t^{s_{0}}$, thus, recalling that $\varrho\le 1$, $(\varphi(\varrho^{-1}))^{-1}\le c\varrho^{s_{0}}$, so
\begin{flalign*}
\limsup_{\varrho\to 0}\frac{1}{\varphi(\varrho^{-1})}\mint_{B_{\varrho}}\varphi(\snr{Du}) \ dx >0 \Rightarrow \limsup_{\varrho\to 0}\varrho^{s_{0}-n}\int_{B_{\varrho}(x_{0})}\varphi(\snr{Du}) \ dx>0,
\end{flalign*}
and, by Lemma \ref{L1}, the content of the previous display yields that
\begin{flalign*}
\limsup_{\varrho\to 0}\varrho^{s_{0}(1+\delta_{g})-n}\int_{B_{\varrho}(x_{0})}\varphi(\snr{Du})^{1+\delta_{g}} \ dx >0.
\end{flalign*}
All in all, we got that 
\begin{flalign*}
\Sigma_{u}\subset \left\{x_{0}\in \Omega \colon \limsup_{\varrho\to 0}\varrho^{s_{0}(1+\delta_{g})-n}\int_{B_{\varrho}(x_{0})}\varphi(\snr{Du})^{1+\delta_{g}} \ dx >0\right\}=:\Sigma,
\end{flalign*}
and, by Giusti's lemma \cite[Proposition 2.7]{G}, $\dim_{\mathcal{H}}(\Sigma)\le n-(1+\delta_{g})s_{0}<n-s_{0}$, hence $\dim_{\mathcal{H}}(\Sigma_{u})<n-s_{0}$.\\\\
\emph{Step 3: The case $s_{0}(1+\delta_{g})>n$}. As already anticipated at the beginning of \emph{Step 1}, here $u\in C^{0,\lambda}_{\mathrm{loc}}(\Omega,\RN)$, for $\lambda:=1-\frac{n}{s_{0}(1+\delta_{g})}$, by Morrey's embedding theorem. Let us outline the major changes to \emph{Step 1} as to obtain \eqref{21}, the rest being exactly the same. In fact, given the $\lambda$-H\"older continuity of $u$, we no longer need to impose any smallness condition like \eqref{small}. Fix $B_{2r}=B_{2r}(x_{0})\Subset \Omega$, with $0<r\le 1/2$. Let us define $v \in u+W^{1,\varphi}_{0}(B_{r},\RN)$ as in \eqref{fz}. From Lemma \ref{L4}, we see that
\begin{flalign}\label{29}
&\mint_{B_{r}}\varphi(\snr{Du}) \ dx \le c\mint_{B_{2r}}\varphi\left(\left | \frac{u-(u)_{B_{2r}}}{r}\right |\right) \ dx\le c\varphi(r^{\lambda-1}),
\end{flalign}
for $c=c(n,N,\nu,L,\Delta_{2}(\varphi), [u]_{0,\lambda; \tilde{\Omega}})$. Keeping \eqref{29} in mind, quantities \eqref{omt1}-\eqref{omt3} can now be estimated as
\begin{flalign}\label{omt11}
\mint_{B_{r}}\tilde{\omega}(\snr{u-(u)_{B_{r}}}) \ dx \le cr^{\alpha\lambda},
\end{flalign}
with $c=c(L,[u]_{0,\lambda;\tilde{\Omega}})$,
\begin{flalign}\label{omt22}
\mint_{B_{r}}\tilde{\omega}(\snr{v-(v)_{B_{r}}}) \ dx \le c\tilde{\omega}\left(r\varphi^{-1}\left(\mint_{B_{r}}\varphi(\snr{Du}) \ dx\right)\right)\le c\tilde{\omega}\left(r\varphi^{-1}(\varphi(r^{\lambda-1}))\right)\le cr^{\alpha\lambda},
\end{flalign}
where $c=c(n,N,\nu,L,\Delta_{2}(\varphi),\Delta_{2}(\varphi,\varphi^{*}),[u]_{0,\lambda;\tilde{\Omega}})$, and
\begin{flalign}\label{omt33}
\mint_{B_{r}}\tilde{\omega}(\snr{u-v}) \ dx \le& c\tilde{\omega}\left(r\varphi^{-1}\left(\mint_{B_{r}}\varphi(\snr{Du-Dv}) \ dx\right)\right)\nonumber \\
\le& c\tilde{\omega}\left(r\varphi^{-1}\left(\mint_{B_{r}}\varphi(\snr{Du}) \ dx\right)\right)\le cr^{\alpha\lambda},
\end{flalign}
for $c=c(n,N,\Delta_{2}(\varphi),\Delta_{2}(\varphi,\varphi^{*}),[u]_{0,\lambda;\tilde{\Omega}})$. As we did in $\emph{Step 1}$, but this time using \eqref{omt11}-\eqref{omt33}, we estimate
\begin{flalign*}
\mint_{B_{r}}&\snr{V_{\varphi}(Du)-V_{\varphi}(Dv)}^{2} \ dx \le c\mint_{B_{r}}f(x_{0},(u)_{B_{r}},\snr{Du})-f(x_{0},(u)_{B_{r}},\snr{Dv}) \ dx \\
=&c\mint_{B_{r}}f(x_{0},(u)_{B_{r}},\snr{Du})-f(x_{0},u,\snr{Du}) \ dx +c\mint_{B_{r}}f(x_{0},u,\snr{Du})-f(x,u,\snr{Du}) \ dx \\
&+c\mint_{B_{r}}f(x,u,\snr{Du})-f(x,v,\snr{Dv}) \ dx+c\mint_{B_{r}}f(x,v,\snr{Dv})-f(x,(v)_{B_{r}},\snr{Dv}) \ dx \\
&+c\mint_{B_{r}}f(x,(v)_{B_{r}},\snr{Dv})-f(x_{0},(v)_{B_{r}},\snr{Dv}) \ dx\nonumber \\
&+c\mint_{B_{r}}f(x_{0},(v)_{B_{r}},\snr{Dv})-f(x_{0},(u)_{B_{r}},\snr{Dv}) \ dx=\mathrm{(I)}+\mathrm{(II)}+\mathrm{(III)}+\mathrm{(IV)}+\mathrm{(V)}+\mathrm{(VI)},
\end{flalign*}
with $c=c(n,N,\nu,c_{\varphi})$. Proceeding as before, we easily see that
\begin{flalign*}
&\snr{\mathrm{(I)}}\le cr^{\alpha\lambda}\mint_{B_{2r}}\varphi(\snr{Du}) \ dx,\quad  \snr{\mathrm{(II)}}\le cr^{\alpha}\mint_{B_{2r}}\varphi(\snr{Du}) \ dx,\\ &\mathrm{(III)}\le cr^{\gamma}\mint_{B_{2r}}\varphi(\snr{Du}) \ dx,\quad \snr{\mathrm{(IV)}}\le cr^{\frac{\alpha\lambda \sigma_{g}}{1+\sigma_{g}}}\mint_{B_{2r}}\varphi(\snr{Du}) \ dx,\\ 
&\snr{\mathrm{(V)}}\le cr^{\alpha}\mint_{B_{2r}}\varphi(\snr{Du}) \ dx, \quad  \snr{\mathrm{(VI)}}\le cr^{\frac{\alpha\lambda\sigma_{g}}{1+\sigma_{g}}}\mint_{B_{2r}}\varphi(\snr{Du}) \ dx,
\end{flalign*}
for $c=c(n,N,\nu,L,\Delta_{2}(\varphi),\Delta_{2}(\varphi,\varphi^{*}),c_{\varphi}, [u]_{0,\lambda;\tilde{\Omega}})$. Defining $\mu:=\frac{\alpha\lambda \sigma_{g}}{1+\sigma_{g}}$ and merging the content of the above two displays we can conclude that
\begin{flalign}\label{36}
\mint_{B_{r}}\snr{V_{\varphi}(Du)-V_{\varphi}(Dv)}^{2} \ dx \le cr^{\mu}\mint_{B_{2r}}\varphi(\snr{Du}) \ dx.
\end{flalign}
Fix $\tau \in \left(0,\frac{1}{4}\right)$ and recall \eqref{19} and \eqref{36} to obtain
\begin{flalign*}
\int_{B_{2\tau r}}\varphi(\snr{Du}) \ dx \le &c \left\{\int_{B_{r}}\snr{V_{\varphi}(Du)-V_{\varphi}(Dv)}^{2} \ dx +\int_{B_{2\tau r}}\varphi(\snr{Dv}) \ dx \right\}\nonumber \\
\le &\tau^{n-\sigma}\left(cr^{\mu}\tau^{\sigma-n}+c\tau^{\sigma}\right)\mint_{B_{2r}}\varphi(\snr{Du}) \ dx,
\end{flalign*}
for $c=c(n,N,\nu,L,\Delta_{2}(\varphi),\Delta_{2}(\varphi,\varphi^{*}),c_{\varphi},[u]_{0,\lambda;\tilde{\Omega}})$. Setting again $2r=\varrho$, $\tau \in \left(0,\frac{1}{4}\right)$ small enough so that $c\tau^{\sigma}<1/2$ and a threshold radius $0<r\le R_{*}\le \frac{1}{2}$ so that $cR_{*}^{\mu}\tau^{\sigma-n}<1/2$, we obtain
\begin{flalign*}
\int_{B_{\tau \varrho}}\varphi(\snr{Du}) \ dx\le \tau^{n-\sigma}\int_{B_{\varrho}}\varphi(\snr{Du}) \ dx,
\end{flalign*}
which is \eqref{21} of \emph{Step 1}. The rest is actually the same.
\\\\
\emph{Step 4: Partial H\"older continuity of $V_{\varphi}(Du)$}. Let us fix an open subset $\tilde{\Omega}\Subset \Omega_{u}$ and $B_{\varrho}\Subset \tilde{\Omega}$. As a consequence of \eqref{24}, after a standard covering argument we have
\begin{flalign}\label{25}
\mint_{B_{\varrho}}\varphi(\snr{Du}) \ dx \le c\varrho^{-\kappa} \quad \mathrm{for \ all \ }\kappa \in \N
\end{flalign}
with $c=c(n,N,\nu,L,\Delta_{2}(\varphi),\Delta_{2}(\varphi,\varphi^{*}),c_{\varphi},\dist(\tilde{\Omega},\partial \Omega_{u}),\kappa)$. From \eqref{incdec1}, the standard Poincar\'e's inequality and \eqref{25}, we see that
\begin{flalign*}
\varrho^{\kappa-s_{0}-n}\int_{B_{\varrho}}\snr{u-(u)_{B_{\varrho}}}^{s_{0}} \ dx\le c\varrho^{\kappa}\mint_{B_{\varrho}}\snr{Du}^{s_{0}} \ dx \le c\varrho^{\kappa}\mint_{B_{\varrho}}[\varphi(\snr{Du})+1 ]\ dx\le c,
\end{flalign*}
thus, after covering, for any fixed $\kappa \in (0,s_{0})$, by the integral characterization of H\"older continuity due to Campanato and Meyers we have $u \in C^{0,\frac{s_{0}-\kappa}{s_{0}}}_{\mathrm{loc}}(\Omega_{u},\RN)$, hence $u$ is locally H\"older continuous at any positive exponent (less than $1$) over $\Omega_{u}$. In particular, \eqref{25} allows selecting $\kappa\in \N$ such that $\varsigma:=\mu-\kappa>0$ in \eqref{36}. Finally, we recall that, if $v$ solves \eqref{fz}, then \eqref{aff7} yields, for $0<t<s\le \varrho/2$,
\begin{flalign*}
\mint_{B_{t}}\snr{V_{\varphi}(Dv)-(V_{\varphi}(Dv))_{B_{t}}}^{2} \ dx \le c(t/s)^{\tilde{\nu}}\mint_{B_{s}}\varphi(\snr{Dv}) \ dx,
\end{flalign*}
with $c=c(n,N,\nu,L,\Delta_{2}(\varphi),\Delta_{2}(\varphi,\varphi^{*}),c_{\varphi})$ and $\tilde{\nu}=\tilde{\nu}(n,N,\nu,L,\Delta_{2}(\varphi),\Delta_{2}(\varphi,\varphi^{*}),c_{\varphi})$. Now, collecting all the above informations, for $0<s<\varrho/2$ we have
\begin{flalign}\label{27}
\mint_{B_{s}}&\snr{V_{\varphi}(Du)-(V_{\varphi}(Du))_{B_{s}}}^{2}\ dx \le c\left\{(\varrho/s)^{n}\mint_{B_{\varrho}}\snr{V_{\varphi}(Du)-V_{\varphi}(Dv)}^{2} \ dx\right.\nonumber \\
&\left.+\mint_{B_{s}}\snr{V_{\varphi}(Dv)-(V_{\varphi}(Dv))_{B_{s}}}^{2} \ dx\right\}\le c\left((\varrho/s)^{n}\varrho^{\varsigma}+(s/\varrho)^{\tilde{\nu}}\varrho^{-\kappa}\right),
\end{flalign}
for $c=c(n,N,\nu,L,\Delta_{2}(\varphi),\Delta_{2}(\varphi,\varphi^{*}),c_{\varphi},\kappa,\dist(\tilde{\Omega},\partial \Omega_{u}))$. Notice that there is no loss of generality in assuming that $\tilde{\nu}<1$, while the exponent on the right-hand side of \eqref{27}, $\kappa$, still needs to be fixed. We equalize in \eqref{27} by setting $s:=(\varrho/4)^{1+a}$. Selecting $\kappa=\frac{\varsigma \tilde{\nu}}{2n}$, after standard manipulations we end up with
\begin{flalign}\label{28}
\mint_{B_{s}}\snr{V_{\varphi}(Du)-(V_{\varphi}(Du))_{B_{s}}}^{2} \ dx \le cs^{\frac{\varsigma\tilde{\nu}}{8n}}, 
\end{flalign}
for $c=c(n,N,\nu,L,\Delta_{2}(\varphi),\Delta_{2}(\varphi,\varphi^{*}),c_{\varphi},\dist(\tilde{\Omega},\partial \Omega_{u}))$ and this holds for all $B_{4s}\Subset \tilde{\Omega}$. Now we use again the integral characterization of H\"older continuity to conclude that $V_{\varphi}(Du) \in C^{0,\beta_{0}}_{\mathrm{loc}}(\Omega_{u},\mathbb{R}^{N\times n})$, with $\beta_{0}=\frac{\varsigma\tilde{\nu}}{16n}$. Looking at the dependencies of the quantities involved, we can conclude that $\beta_{0}=\beta_{0}(n,N,\nu,L,\Delta_{2}(\varphi),\Delta_{2}(\varphi,\varphi^{*}),c_{\varphi},\alpha,\gamma)$.\\\\
\emph{Step 5: Partial H\"older continuity of Du}. From \emph{Step 4} we know that $V_{\varphi}(Du)$ is locally H\"older continuous over $\Omega_{u}$. Fix any open subset $\tilde{\Omega}\Subset \Omega_{u}$ and let $x,y\in \tilde{\Omega}$. From \eqref{aff40} and the $\beta_{0}$-H\"older continuity of $V_{\varphi}(Du)$, we immediately have
\begin{flalign*}
\snr{Du(x)-Du(y)}=&\snr{V_{\varphi}^{-1}((V_{\varphi}(Du))(x))-V_{\varphi}^{-1}((V_{\varphi}(Du))(y))}\nonumber \\
\le &[V_{\varphi}^{-1}]_{0,\beta}\snr{(V_{\varphi}(Du))(x)-(V_{\varphi}(Du))(y)}^{\beta}\nonumber \\
\le &[V_{\varphi}^{-1}]_{0,\beta}[V_{\varphi}(Du)]_{0,\beta_{0}}\snr{x-y}^{\beta\beta_{0}}.
\end{flalign*}
Hence, setting $\beta':=\beta\beta_{0}$ we can conclude that $Du \in C^{0,\beta'}_{\mathrm{loc}}(\Omega_{u})$ and, looking at the dependecies of $\beta$ and $\beta_{0}$, we obtain that $\beta'=\beta'(n,N,\nu,L,\Delta_{2}(\varphi),\Delta_{2}(\varphi,\varphi^{*}),c_{\varphi},\alpha,\gamma)$.
\begin{remark}
\emph{In the scalar case $N=1$, Theorem \ref{reg} holds also in case $f$ does not satisfy assumption \eqref{assf2}. In fact the quasilinear structure is required in the vectorial case $N>1$ as to obtain the results in Proposition \ref{aff30}, while, if $N=1$ we can use Lieberman's work \cite{L} to obtain the same reference estimates even if the integrand does not depend on the modulus of the gradient.}
\end{remark}

\end{document}